\documentclass[11pt]{article}

\usepackage[a4paper,margin=1.15in]{geometry}

\usepackage{amssymb,amsmath, mathtools, amsthm,latexsym,booktabs,units,todonotes,color,enumerate,comment,lineno,graphics}

\usepackage{float}
\usepackage{hyperref}
\usepackage{cleveref}
\usepackage{microtype}
\usepackage{tikz}
\usetikzlibrary{calc}

\hypersetup{
  colorlinks=true,
  linkcolor=purple,
  citecolor=black,
  urlcolor=blue
}

\usepackage{url}
\usepackage[
    backend=biber,
    style=numeric,
    sorting=nyt
]{biblatex}
\AtBeginDocument{%
  \setlength{\abovedisplayskip}{8pt plus 2pt minus 4pt}%
  \setlength{\belowdisplayskip}{8pt plus 2pt minus 4pt}%
  \setlength{\abovedisplayshortskip}{6pt plus 2pt minus 3pt}%
  \setlength{\belowdisplayshortskip}{6pt plus 2pt minus 3pt}%
}

\theoremstyle{plain}
\newtheorem{thm}{Theorem}[section]
\newtheorem{prop}[thm]{Proposition}
\newtheorem{lem}[thm]{Lemma}
\newtheorem{obs}[thm]{Observation}
\newtheorem{cor}[thm]{Corollary}

\newtheorem{prob}{Problem}

\theoremstyle{definition}
\newtheorem{defn}[thm]{Definition}
\newtheorem{example}[thm]{Example}

\theoremstyle{remark} 
\newtheorem{rmk}[thm]{Remark} 
\newtheorem{claim}{Claim} 
\AtBeginEnvironment{thm}{\setcounter{claim}{0}}
\newtheorem{fact}{Fact} 
\newtheorem*{note}{Note} 
\newtheorem*{notation}{Notation}

\usepackage{setspace}
\newcommand{\HC}{\textnormal{HC}}
\newcommand{\C}{\textnormal{C}}
\newcommand{\MHC}{\textnormal{MHC}}
\newcommand{\MC}{\textnormal{MC}}
\newcommand{\mNHC}{\textnormal{mNHC}}
\newcommand{\mNC}{\textnormal{mNC}}

\title
{Sufficiency of Hall's Condition for Graphic List Coloring}

\author{
Parikshit Chalise\thanks{ Corresponding author.}
}
\date{\today}

\begin{document}


\maketitle


\begin{abstract}
For finite simple graphs $G,H$ on a common vertex set $V$, we say that $H$ is $G$-colorable if $H$ admits a proper list coloring with list assignment $L(v)=N_G(v)$ for all $v\in V$. This notion of coloring a graph using the neighborhood of another graph on the same vertex set, which we call \textit{graphic list coloring}, has connections to several classical topics, including systems of distinct representatives and graph factorizations. In this paper, we investigate when a necessary Hall-type condition, introduced by Hilton and Johnson in 1990, is also sufficient for $H$ to be $G$-colorable. We characterize all graphs $H$ that are $G$-colorable whenever the pair $(H,G)$ satisfies Hall's condition, answering a question raised by Johnson. We then consider the dual problem of characterizing graphs $G$ such that, whenever $(H,G)$ satisfies Hall's condition, $H$ is $G$-colorable. In this vein, we obtain complete results for several families of graphs, such as forests, complete multipartite graphs, and grid graphs.
\end{abstract} 

\noindent{\textit{Keywords}:} graphic list coloring, Hall's condition, list coloring 

\noindent{\textit{2020 Mathematics Subject Classification}:}
05C15 (primary), 05C70 (secondary).

\section{Introduction}\label{sec:intro}
All graphs here are finite and simple. For a graph $H$, we write $V(H)$ for its vertex set and $E(H)$ for its edge set. Given a set $S$ of colors, let $L:V(H)\to 2^S$ be a list assignment to the vertices of $H$ with subsets of $S$. The graph $H$ is said to be \textit{$L$-colorable} if there exists a map $\varphi:V(H)\to S$ such that 
\begin{enumerate}[(i)]
    \item $\varphi(v)\in L(v)$ for all $v\in V(H)$, and 
    \item $\varphi(v)\neq \varphi(u)$ for all $uv\in E(H)$. 
\end{enumerate}
The second condition above is equivalent to saying $\varphi^{-1}(\sigma)$ is an independent set for all $\sigma\in S$. This notion of properly coloring a graph subject to specified constraints at each vertex is known as \textit{list coloring} and has a rich literature in graph theory  \cite{Alon1993RestrictedColorings, ErdosRubinTaylor1979Choosability, vizing}.

Given $\sigma\in S$, a \emph{$\sigma$-transversal} in $H$ is an independent set $X\subseteq V(H)$ such that $\sigma\in L(v)$ for all $v\in X$. Let $\alpha(H,L,\sigma)$ denote the maximum size of a $\sigma$-transversal in $H$. Throughout the paper, we say that $(H,L)$ satisfies \emph{Hall's condition} if for every subgraph $F$ of $H$,
\begin{equation}\label{eq:Hall}\tag{$\star$}
  |V(F)| \;\le\; \sum_{\sigma\in S}\alpha(F,L,\sigma).
\end{equation}
This definition is motivated by Hall's marriage theorem \cite{Hall1935}, which can be stated as follows: the complete graph $K$ is $L$-colorable if and only if $(K,L)$ satisfies \eqref{eq:Hall}. Hilton and Johnson \cite{Hilton-Johnson} discovered this formulation and showed that if a graph $H$ is $L$-colorable, then $(H,L)$ satisfies Hall's condition. The converse is not true in general, as we will discuss shortly.

In the current inquiry, we consider a special setting of list coloring, which we call \textit{graphic list coloring}. Let us denote by $\mathcal{G}(V)$ the set of all finite, simple, undirected graphs on the vertex set $V$. A list $L$ indexed by $V$ is \textit{graphic} if there exists a graph $G\in\mathcal G(V)$ such that
\[
L(v)=N_G(v)=\{u\in V:uv\in E(G)\}
\quad\text{for all }v\in V.
\]
This terminology is analogous to that of a graphic degree sequence.
Given $G,H \in \mathcal{G}(V)$, let $L$ be the graphic list defined by $L(v)=N_G(v)$ for all $v\in V$. We say that $H$ is $G$-colorable if $H$ is $L$-colorable. Likewise, we say that $(H,G)$ satisfies Hall's condition if $(H,L)$ satisfies Hall's condition.
 
The current paper concerns the following problems, posed by Peter D. Johnson during the
{Virtual Masamu Advanced Study Institute} (MASI) 2025.

\begin{prob}\label{prob:A} Characterize the graphs $H\in\mathcal G(V)$ such that, for every $G\in\mathcal G(V)$, if $(H,G)$ satisfies Hall's condition, then $H$ is $G$-colorable. \end{prob} 

\begin{prob}\label{prob:B} Characterize the graphs $G\in\mathcal G(V)$ such that, for every $H\in\mathcal G(V)$, if $(H,G)$ satisfies Hall's condition, then $H$ is $G$-colorable. \end{prob}

\subsection{Organization of  the paper}
In Section~\ref{sec:background}, we review relevant previous work. In Section~\ref{sec:prob1}, we obtain a complete answer for Problem~\ref{prob:A}. For Problem~\ref{prob:B}, we obtain results for several families of graphs, including forests,  complete multipartite graphs, and grid graphs; these results are contained in Section~\ref{sec:prob2} of the paper. We conclude with some future directions in Section~\ref{sec:conclusion}.

\section{Preliminaries}\label{sec:background}

\subsection{Graphic list coloring}
A systematic study of graphic list coloring was initiated recently in \cite{self-coloring}. One of the original motivations was to characterize graphs whose open-neighborhood lists admit a system of distinct representatives (SDR) \cite{hedet2018}. Equivalently, this asks for a characterization of the graphs $G$ for which the complete graph on the same vertex set is $G$-colorable. An answer to this was obtained in terms of graph factors.

\begin{defn}
   Given a graph $H$, a subgraph $F \subseteq H$ is a \textit{spanning $\{1,2\}$-factor} of $H$ if $V(F)=V(H)$ and each connected component of $F$ is either $1$-regular or $2$-regular. In other words, $F$ is a collection of single edges and cycles. 
\end{defn} 

\begin{thm}[\cite{self-coloring, hedet2018}]\label{thm:hedet-main}
Given a graph $G$, the complete graph on $V(G)$ is $G$-colorable if and only if $G$ has a spanning $\{1,2\}$-factor.
\end{thm}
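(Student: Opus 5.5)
The complete graph $K$ on $V=V(G)$ is $G$-colorable exactly when there is an injective map $\varphi:V\to V$ with $\varphi(v)\in N_G(v)$ for every $v$. Injectivity is forced because any two vertices of $K$ are adjacent, so each color class has size at most one. Since $V$ is finite, such a $\varphi$ is automatically a bijection, that is, a permutation of $V$. So the plan is to show that $G$ has a spanning $\{1,2\}$-factor if and only if $V$ admits a permutation $\varphi$ with $v\varphi(v)\in E(G)$ for all $v$.

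\emph{From a coloring to a factor.} Given such a permutation $\varphi$, decompose it into disjoint cycles. There are no fixed points, because $\varphi(v)\in N_G(v)$ and $G$ has no loops. For a cycle $(v_1\,v_2\,\cdots\,v_k)$, the edges $v_iv_{i+1}$ (indices mod $k$) all lie in $E(G)$.
\begin{itemize}
\item If $k=2$, the cycle contributes the single edge $v_1v_2$, a $1$-regular component.
\item If $k\ge 3$, the vertices $v_1,\dots,v_k$ are distinct, so these edges form a cycle of $G$, a $2$-regular component.
\end{itemize}
The union over all cycles of $\varphi$ covers every vertex, and distinct cycles use disjoint vertex sets. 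This union is therefore a spanning $\{1,2\}$-factor of $G$.

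\emph{From a factor to a coloring.} Given a spanning $\{1,2\}$-factor $F$, define $\varphi$ on each component of $F$.
\begin{itemize}
\item On an edge component $uv$, set $\varphi(u)=v$ and $\varphi(v)=u$.
\item On a cycle component, fix a cyclic orientation $v_1v_2\cdots v_kv_1$ and set $\varphi(v_i)=v_{i+1}$ (indices mod $k$).
\end{itemize}
Since the components partition $V$, $\varphi$ is a well-defined permutation of $V$. Each $\varphi(v)$ is an $F$-neighbor of $v$, hence a $G$-neighbor, so $\varphi$ respects the lists. Because $\varphi$ is injective, every color class is a singleton and hence independent in $K$, so $\varphi$ is a proper $L$-coloring of $K$.

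No step here is a real obstacle. The only points needing care are the observation that injectivity on a finite set gives a permutation, and the treatment of $2$-cycles of $\varphi$: these correspond to single edges rather than cycles, which is exactly why $1$-regular components must be allowed in the factor.
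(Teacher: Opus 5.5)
Your proof is correct and complete. It identifies $G$-colorings of the complete graph with permutations $\varphi$ of $V$ satisfying $v\varphi(v)\in E(G)$, then reads a $\{1,2\}$-factor off the cycle decomposition (with $2$-cycles giving the $K_2$ components), and builds the permutation from a factor for the converse; this is the standard argument. The paper does not reprove this theorem but quotes it from the cited works. Wherever it applies the result, its constructions are exactly your factor-to-coloring direction: swapping colors across matching edges in the proofs of Proposition~\ref{prop:diamond} and Theorem~\ref{thm:odd-grid-Hall-universal}, and coloring along a factor of $G[R]$ in Theorem~\ref{thm:G-Hall-characterization}.
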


The following monotonicity facts on graphic list coloring follow from the definitions and are derived from \cite{self-coloring}.
\begin{prop}[\cite{self-coloring}]\label{prop:basics} 
    Suppose $H,G,X\in \mathcal{G}(V)$. 
\begin{enumerate}
    \item If $G$ has an isolated vertex, then no graph on $V$ is $G$-colorable.
    \item If $H$ is $G$-colorable and $X$ is a spanning subgraph of $H$, then $X$ is $G$-colorable.
\item   If $H$ is $G$-colorable and $G$ is a spanning subgraph of $X$, then $H$ is $X$-colorable.
\end{enumerate}
\end{prop}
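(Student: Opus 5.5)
The three parts of Proposition~\ref{prop:basics} follow directly from the definition of $G$-colorability with the graphic list $L(v)=N_G(v)$. I will check each by unwinding what a proper $L$-coloring is. No earlier result beyond the definitions is needed.

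For part 1, suppose $v\in V$ is isolated in $G$. Then $L(v)=N_G(v)=\emptyset$, so no map $\varphi$ can satisfy $\varphi(v)\in L(v)$. Hence no graph $H\in\mathcal G(V)$ is $G$-colorable, whatever its edges. Note that $v$ lies in every graph on $V$, because all graphs in $\mathcal G(V)$ share the vertex set $V$.

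For part 2, let $\varphi$ be a proper coloring of $H$ with $\varphi(v)\in N_G(v)$ for all $v$. Since $X$ is a spanning subgraph of $H$, we have $V(X)=V$ and $E(X)\subseteq E(H)$. The list condition depends only on $G$, so it still holds for $X$. Every edge $uv\in E(X)$ is an edge of $H$, so $\varphi(u)\neq\varphi(v)$. Thus the same $\varphi$ witnesses that $X$ is $G$-colorable.

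For part 3, let $\varphi$ be a proper $G$-coloring of $H$. Since $G$ is a spanning subgraph of $X$, we have $E(G)\subseteq E(X)$, and hence $N_G(v)\subseteq N_X(v)$ for every $v$. So $\varphi(v)\in N_G(v)\subseteq N_X(v)$. Properness concerns only the edges of $H$, which are unchanged. Therefore the same $\varphi$ is a proper $X$-coloring of $H$.

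I expect no real obstacle. The only point needing care is that "spanning" keeps the vertex set equal to $V$, so that the lists are defined on the same set and the neighborhood inclusion makes sense.
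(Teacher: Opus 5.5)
Your proposal is correct and takes essentially the paper's approach: the paper gives no separate proof and only notes that these monotonicity facts follow from the definitions. Your direct check — the empty list $N_G(v)=\varnothing$ at an isolated vertex, the same $\varphi$ remaining proper on $E(X)\subseteq E(H)$, and $N_G(v)\subseteq N_X(v)$ — is exactly that verification.
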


The main implication of  Proposition~\ref{prop:basics} is that if a graph $G$ on the vertex set $V$ colors the complete graph on $V$, then it colors every graph on $V$.
The following result was also obtained by the authors of \cite{self-coloring}.

\begin{thm}[\cite{self-coloring}]\label{thm:self-coloring}
For every graph $G$ with no isolated vertices, $G$ is $G$-colorable.
\end{thm}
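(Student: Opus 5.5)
We need to show: if $G$ has no isolated vertices, then $G$ is $G$-colorable. That is, we want $\varphi:V\to V$ with $\varphi(v)\in N_G(v)$ for every $v$ and $\varphi(u)\neq\varphi(v)$ whenever $uv\in E(G)$.

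First we reduce to trees. A proper coloring of $G$ restricts to a proper coloring of any spanning subgraph, and enlarging lists only helps, so monotonicity is available. The natural move is to pick a spanning forest $T$ of $G$ with no isolated vertices; each component of $G$ has at least two vertices, so a spanning tree of each component works. However, the list monotonicity in Proposition~\ref{prop:basics} goes the wrong way for the edge set of $H$: we need to properly color $G$, not $T$. So coloring $T$ by $T$ is not enough, and we construct $\varphi$ directly on $G$. We can still treat each component of $G$ separately. Colors may be shared across components, since there are no edges between them, so we assume $G$ is connected with at least two vertices.

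The key observation is that it suffices to find $\varphi$ with $\varphi(v)\in N_G(v)$ and $\varphi(u)\ne\varphi(v)$ for all edges $uv$. Choosing $\varphi$ injective on all of $V$ would give this, but it amounts to a $\{1,2\}$-factor-like structure (Theorem~\ref{thm:hedet-main}), which need not exist, for example in a star. So we design $\varphi$ from a BFS or rooted spanning tree $T$ with root $r$. For a non-root vertex $v$, set $\varphi(v)=p(v)$, its parent. For the root, set $\varphi(r)=c$ for some child $c$ of $r$.

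We then check properness on an edge $uv\in E(G)$ by cases.
\begin{itemize}
\item If neither endpoint is $r$ and $p(u)=p(v)$, the coloring fails. So siblings that are adjacent in $G$ cause a problem.
\item With a BFS tree, a non-tree edge joins vertices in the same or consecutive levels. Same-level adjacent vertices with the same parent are exactly the problematic case.
\end{itemize}

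Instead, we use a DFS tree. In a DFS tree every non-tree edge joins an ancestor to a descendant, so adjacent non-root $u,v$ with $u$ an ancestor of $v$ have distinct parents unless $u=p(v)$ and $p(u)=p(v)$, which is impossible. Hence with a DFS tree the parent map is proper on all non-root vertices. For the root, $\varphi(r)=c$ where $c$ is a child of $r$. This conflicts with $\varphi(v)$ for a neighbor $v$ of $r$ only if $p(v)=c$, meaning $v$ is a child of $c$ adjacent to $r$. We avoid this by choosing $c$ to be a leaf child of $r$ if one exists. Otherwise we take the first DFS child $c$ and handle $c$'s children adjacent to $r$.

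The main obstacle is this root case. The plan is to choose $\varphi(r)=c$ and, if $c$ has children adjacent to $r$, recolor one such child $w$ by $\varphi(w)=r$. Then we recheck the conflicts: $w$'s neighbors are ancestors or descendants of $w$. Any descendant $x$ with $\varphi(x)=r$ would itself be a recolored vertex, so we recolor only one vertex at a time, iterating down a path until no conflict remains. We expect this final patching argument, or equivalently an induction on $|V|$ that deletes a leaf of the DFS tree, to be the crux.
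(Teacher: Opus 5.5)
Your handling of the non-root vertices is correct. In a DFS tree every edge of $G$ joins an ancestor--descendant pair. If $u$ is a proper ancestor of $v$, then $p(u)$ lies strictly above $u$, while $p(v)$ is $u$ or lies below it. So the parent map is proper away from the root.

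The gap is the root case. You rightly call it the crux, but you do not resolve it, and the patch you sketch fails. Suppose $w$ is a child of $c$ adjacent to $r$ and you set $\varphi(w)=r$. Then $w$ clashes with its own parent: $c$ is a child of $r$, so $\varphi(c)=p(c)=r$, and $cw$ is a tree edge, hence an edge of $G$. Your recheck only examines descendants of $w$ and misses this ancestor.

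The triangle on $\{r,c,w\}$ already defeats the procedure. The DFS tree is the path $r\,c\,w$. Your rule forces $\varphi(r)=c=\varphi(w)$ with $rw\in E(G)$, and the patch produces $\varphi(w)=r=\varphi(c)$ with $cw\in E(G)$. Moreover, $c$ may have several children adjacent to $r$, and recoloring one at a time leaves the others in conflict. The step ``iterate down a path until no conflict remains'' comes with no argument for either correctness or termination.

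The missing idea is simply not to insist that $\varphi(r)$ be a tree child of $r$. Let $\varphi(r)=x$, where $x$ is a $G$-neighbor of $r$ of maximum depth in $T$. If some $v\in N_G(r)$ had $\varphi(v)=x$, then $p(v)=x$. So $v$ would be a neighbor of $r$ strictly deeper than $x$, which is impossible. In the triangle this gives the cyclic coloring $r\mapsto w$, $c\mapsto r$, $w\mapsto c$. With this single change your DFS argument becomes a complete proof.

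For comparison, the paper quotes this theorem without proof. Within its framework, though, it follows immediately from Lemma~\ref{lem:no-12-factor-decomposition} applied with $H=G$:
\begin{itemize}
\item since $A$ is independent in $G$, each $v\in A$ can take any color in $N_G(v)\subseteq B$, which is nonempty because $G$ has no isolated vertices;
\item $B$ is colored injectively into $A$ via the matching;
\item $R$ is colored injectively within $R$ via the $\{1,2\}$-factor.
\end{itemize}
The three color sets are disjoint, so this coloring is proper. Your route, once repaired, is more elementary, needing neither Hall's theorem nor Fact~\ref{fact:12-factor}.
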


\subsection{Hall's condition}
Systems of distinct representatives (SDR) arise naturally in matching theory, most notably through Hall's marriage theorem \cite{Hall1935}. The following foundational fact discovered by Hilton and Johnson \cite{Hilton-Johnson} married SDR with graph coloring.

\begin{lem}[\cite{Hilton-Johnson}]\label{lem:HJ-lem}
If $H$ is $L$-colorable, then $(H,L)$ satisfies Hall's condition.
\end{lem}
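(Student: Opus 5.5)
Fix a proper $L$-coloring $\varphi:V(H)\to S$, so that $\varphi(v)\in L(v)$ for every $v$ and each color class $\varphi^{-1}(\sigma)$ is independent in $H$. To verify \eqref{eq:Hall} we must show $|V(F)|\le\sum_{\sigma\in S}\alpha(F,L,\sigma)$ for an arbitrary subgraph $F$ of $H$. The strategy is to restrict $\varphi$ to $F$ and use the color classes there as witnesses for the quantities $\alpha(F,L,\sigma)$.

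Let $F\subseteq H$ be any subgraph and, for each $\sigma\in S$, set $X_\sigma=\varphi^{-1}(\sigma)\cap V(F)$. Since $E(F)\subseteq E(H)$ and $\varphi^{-1}(\sigma)$ contains no edge of $H$, the set $X_\sigma$ contains no edge of $F$, so it is independent in $F$. Moreover, every $v\in X_\sigma$ satisfies $\sigma=\varphi(v)\in L(v)$. Hence $X_\sigma$ is a $\sigma$-transversal in $F$, and therefore $|X_\sigma|\le\alpha(F,L,\sigma)$.

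The sets $X_\sigma$, $\sigma\in S$, partition $V(F)$, because every vertex of $F$ receives exactly one color under $\varphi$. Summing over $\sigma$ gives
\[
|V(F)|=\sum_{\sigma\in S}|X_\sigma|\le\sum_{\sigma\in S}\alpha(F,L,\sigma),
\]
which is \eqref{eq:Hall}. Since $F$ was arbitrary, $(H,L)$ satisfies Hall's condition.

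There is no real obstacle in this argument. The only point requiring care is that independence in $H$ passes down to independence in a subgraph $F$; this holds because $F$ has no edges that are absent from $H$, and it is precisely why Hall's condition is stated for all subgraphs rather than only induced ones.
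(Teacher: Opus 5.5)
Your argument is correct, and it is the standard proof of this fact. The paper gives no proof of its own and only cites Hilton and Johnson. Your key step is that the color classes of $\varphi$ restricted to $V(F)$ are $\sigma$-transversals that partition $V(F)$.

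Your closing aside is off, though it does not affect the proof. The same argument works verbatim for induced subgraphs. Also, deleting edges can only increase each $\alpha(F,L,\sigma)$, so Hall's condition over all subgraphs is equivalent to Hall's condition over induced subgraphs; the "all subgraphs" formulation is not what makes the proof go through.
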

As noted earlier, Hall's condition is not sufficient for list colorability in general. For instance, Figure~\ref{fig:not-hall-L-sufficient} shows an example of a graph $H$ such that  $(H,L)$ satisfies Hall's condition, but $H$ is not $L$-colorable. Removing the edge $v_1v_3$ yields another  example.
This naturally leads to the question of determining those graphs $H$, besides the complete graph, for which Hall's condition is not only necessary but also sufficient for $L$-colorability. Let us call such a graph \textit{Hall-colorable}  to indicate sufficiency of Hall's condition for colorability. The characterization of Hall-colorable graphs is complete.

\begin{figure}[H]
\centering
\begin{tikzpicture}[
  x=1.2cm, y=0.9cm,
  v/.style={circle,draw,inner sep=1.4pt},
  col/.style={circle,draw,inner sep=1.4pt},
  blk/.style={draw,rounded corners,inner sep=4pt,align=center},
  every node/.style={font=\small},
  edge/.style={line width=0.35pt}
]

\node[v] (v0) at (0,  0) {$v_1$};
\node[v] (v1) at (0,  -2) {$v_2$};
\node[v] (v3) at (-2,  0) {$v_4$};
\node[v] (v2) at (-2,  -2) {$v_3$};

\node[right=1pt of v0] {$\{c\}$};
\node[left=1pt of v3] {$\{b,c\}$};
\node[right=1pt of v1] {$\{a,c\}$};
\node[left=1pt of v2] {$\{a,b\}$};

\draw[edge] (v0) -- (v1);
\draw[edge] (v1) -- (v2);
\draw[edge] (v2) -- (v3);
\draw[edge] (v0) -- (v3);
\draw[edge] (v2) -- (v0);
\end{tikzpicture}
\caption{A graph $H$ such that $(H,L)$ satisfies Hall's condition, but $H$ is not $L$-colorable.}
\label{fig:not-hall-L-sufficient}  
\end{figure} 

\begin{thm}[\cite{Hilton-Johnson}]\label{thm:HJ-thm}
A graph $H$ is Hall-colorable if and only if it contains neither an induced cycle $C_k$, $k\ge 4$, nor an induced copy of the diamond graph $K_4-e$.
\end{thm}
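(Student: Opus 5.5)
Both directions of Theorem~\ref{thm:HJ-thm} can be handled separately. Necessity is the easy half, and it rests on two observations. The first is that Hall-colorability passes to induced subgraphs. Given a list assignment on an induced subgraph $F$ of $H$, give every vertex outside $F$ its own fresh private color. Hall's condition for the enlarged assignment on $H$ reduces to Hall's condition on $F$, because each private color contributes exactly one independent vertex to every subgraph that contains its owner. Likewise, a coloring of $H$ restricts to a coloring of $F$. The second observation is that each forbidden graph is itself not Hall-colorable, which is shown by exhibiting an explicit list assignment.
\begin{itemize}
\item For the diamond, the example in Figure~\ref{fig:not-hall-L-sufficient} with the edge $v_1v_3$ removed is the template.
\item For $C_k$ with $k\ge 4$, take lists $\{a,b\}$ at every vertex except one vertex with list $\{a\}$. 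For even $k$, use $\{a,b\}$ at every vertex except a pair of adjacent vertices with lists $\{a,c\}$ and $\{b,c\}$. Then tune the lists so that no coloring exists but every subgraph satisfies \eqref{eq:Hall}.
\end{itemize}
Checking Hall's condition in these small cases is a finite verification. Only connected subgraphs need checking, since both sides of \eqref{eq:Hall} are additive over components.

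For sufficiency, let $H$ contain no induced $C_k$ ($k\ge4$) and no induced diamond. Then $H$ is chordal. It is also diamond-free, and in a chordal diamond-free graph any two maximal cliques meet in at most one vertex. Such graphs are exactly the block graphs, the graphs in which every block is a complete graph. I would argue by induction on the number of blocks. If $H$ is a single clique, the claim is Hall's theorem itself, as noted after \eqref{eq:Hall}. Otherwise take a leaf block $B$ that attaches to the rest $H'$ at a cut vertex $x$, and set $B^-=B\setminus\{x\}$.

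The key step is to choose the color of $x$ so that both sides remain colorable. Applying Hall's condition to the subgraph $B$ gives an SDR $\varphi_B$ of the lists on $B$. Applying it to $H'$, which is again a block graph, and using induction gives a coloring of $H'$. The difficulty is to make the two colorings agree at $x$. Let $A\subseteq L(x)$ be the set of colors $c$ for which $B^-$ has an SDR avoiding $c$. Hall's theorem on $B$ shows that $A$ is nonempty, and a deficiency/König-type argument gives a structural description: either $A=L(x)$, or there is a tight set $T\subseteq B^-$ whose lists have union $U$ with $|U|=|T|$, and then $A=L(x)\setminus U$.

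In the second case, restrict the list of $x$ to $A$ and obtain an assignment $L'$ on $H'$. The goal is to show that $(H',L')$ still satisfies \eqref{eq:Hall}, so that induction colors $H'$ with $x$ receiving a color in $A$, and the coloring then extends to $B^-$. To verify this, let $F\subseteq H'$ be a subgraph containing $x$, and form $F^+ = F\cup T$, where the vertices of $T$ are joined to $x$ and to each other as in $B$. Hall's condition on $F^+$ in $H$ should then yield Hall's condition for $(F,L')$. The reason is that the colors in $U$ are used up by the clique $T\cup\{x\}$: each color $\sigma\in U$ gains at most one from $T$ in $\alpha$, and the $|T|$ vertices of $T$ exactly absorb this gain.

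I expect this accounting to be the main obstacle. One must show $\alpha(F^+,L,\sigma)\le \alpha(F,L',\sigma)+[\sigma\in U]$. This holds because any independent set in $F^+$ uses at most one vertex of the clique $T\cup\{x\}$, and a $\sigma$-transversal that used $x$ with $\sigma\in U$ can swap $x$ for a vertex of $T$. Getting this swap right, especially when $x$ is absent from the optimal transversal, is where the block-graph structure, namely that $T$ sees $H'$ only through $x$, is essential.
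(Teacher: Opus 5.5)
\textbf{Gap in the necessity half.} The problem is the lists you propose for $C_k$: they are not obstructions, and ``tune the lists'' does not repair them. No assignment using only the two colors $a,b$ can work, because Lemma~\ref{lem:2-color-Hall} shows that for $|S|\le 2$ Hall's condition already implies $L$-colorability. Concretely, take $\{a\}$ at $v_1$ and $\{a,b\}$ elsewhere. For odd $k$ you get $\alpha(C_k,L,a)=\alpha(C_k,L,b)=\frac{k-1}{2}$, so the sum is $k-1<k$ and Hall's condition fails on $C_k$ itself. For even $k$ the cycle is simply colorable. Your even-$k$ variant ($\{a,c\}$ at $v_1$, $\{b,c\}$ at $v_2$, $\{a,b\}$ elsewhere) is also colorable: put $c$ on $v_1$ and alternate $b,a,\dots$ from $v_2$, which ends with $b$ at $v_k$ next to $c$.

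The missing idea is a vertex whose two-element list is exactly the pair of colors forced on its two neighbors. One neighbor's color is forced by a singleton, the other's by parity. Meanwhile the third color must still admit a $\sigma$-transversal of size $2$, so that the count reaches $k$. This is precisely the assignment of Lemma~\ref{lem:cycle-Hall-not-colorable}, where $\alpha(C_k,L,c)=2$ and the $a$- and $b$-terms sum to $k-2$.

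Your diamond reference is also reversed. Figure~\ref{fig:not-hall-L-sufficient} is already the diamond (its non-edge is $v_2v_4$); deleting $v_1v_3$ gives the $C_4$ example. So the diamond obstruction is that figure as drawn. Your private-color reduction to induced subgraphs is correct.

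\textbf{The sufficiency half is correct.} It supplies a proof that the paper itself does not give, since the paper only cites Hilton and Johnson. The step you flag as the main obstacle needs no swapping:
\begin{enumerate}[(i)]
\item For $\sigma\in U$, any $\sigma$-transversal $I$ of $F^+$ meets the clique $T\cup\{x\}$ in at most one vertex. The set $I\setminus(T\cup\{x\})$ avoids $x$ and uses only unchanged lists, so it is a $\sigma$-transversal of $(F,L')$.
\item For $\sigma\notin U$, no vertex of $T$ has $\sigma$ in its list, and $\sigma\in L'(x)$ if and only if $\sigma\in L(x)$. So the two values of $\alpha$ coincide.
\end{enumerate}
Summing over $\sigma$ and using $|U|=|T|$ gives Hall's condition for $(H',L')$. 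The single tight set $T$ with $A=L(x)\setminus U$ is obtained as the union of all tight subsets of $B^-$. This union is itself tight by submodularity of $|N(\cdot)|$, which justifies your structural description of $A$.
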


In other words, the family of graphs for which Hall's condition is sufficient for list colorability consists of those graphs $H$ such that every \textit{block} in $H$, i.e., a maximal connected subgraph with no cut vertex, is a clique. We remind the reader that Theorem~\ref{thm:HJ-thm} does not completely answer Problem~\ref{prob:A}, for the lists in the current study are not arbitrary;  they must be graphic.

Subsequent work on Hall's condition studied quantitative parameters similar to the choice number for list coloring, measuring how far a graph is from having Hall's condition suffice for list colorability, including the Hall number, Hall index, and total Hall number \cite{CropperHilton2002HallParameters, HiltonJohnson1999HallNumber, Johnson2002HallParameters}. There have also been some results employing Hall's condition to the problem of completing partial proper colorings \cite{BobgaGoldwasserHiltonJohnson2011CompletingPartialLatinSquares, HollidayVandenbusscheWestlund2015CompletingPartialProperColorings}.

In general, checking Hall's condition is a laborious process as it needs to be verified for every connected induced subgraph,\footnote{This is equivalent to satisfying Hall's condition \eqref{eq:Hall} for every subgraph.} potentially on the order of $2^{|V|}$. The following two observations, which can be found in \cite{Cropper1998HallsConditionListColoring}, simplify the task to some extent. The first observation below is a consequence of Lemma~\ref{lem:HJ-lem} and the second one directly follows from the definition of Hall's condition.

\begin{obs}\label{obs:hall-check}
If $H-v$ is $L$-colorable for all $v \in V(H)$,  then $(H,L)$ satisfies Hall's condition if and only if $  |V(H)| \;\le\; \sum_{\sigma\in S}\alpha(H,L,\sigma).
$
\end{obs}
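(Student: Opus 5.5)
Both directions follow from the definition of Hall's condition and Lemma~\ref{lem:HJ-lem}, once we note how $\alpha$ behaves on subgraphs. Only the reverse implication needs an argument.

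\emph{Forward direction.} If $(H,L)$ satisfies Hall's condition, then taking $F=H$ in \eqref{eq:Hall} gives $|V(H)|\le\sum_{\sigma\in S}\alpha(H,L,\sigma)$ at once.

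\emph{Reverse direction.} Assume the inequality holds for $H$ itself and that $H-v$ is $L$-colorable for every $v\in V(H)$. We must verify \eqref{eq:Hall} for every subgraph $F$ of $H$, and we split according to whether $V(F)=V(H)$.

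\emph{Case $V(F)\subsetneq V(H)$.} Pick a vertex $v\in V(H)\setminus V(F)$. Then $F$ is a subgraph of $H-v$, where $L$ is restricted to the surviving vertices. By hypothesis $H-v$ is $L$-colorable, so Proposition~\ref{prop:basics}-type monotonicity (restricting a proper coloring to a subgraph) shows that $F$ is $L$-colorable. Lemma~\ref{lem:HJ-lem} then gives that $(F,L)$ satisfies Hall's condition; in particular \eqref{eq:Hall} holds for $F$ itself.

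\emph{Case $V(F)=V(H)$.} Here $F$ is a spanning subgraph of $H$, so $E(F)\subseteq E(H)$. Every independent set of $H$ is therefore independent in $F$. It follows that every $\sigma$-transversal of $H$ is a $\sigma$-transversal of $F$, so $\alpha(F,L,\sigma)\ge\alpha(H,L,\sigma)$ for each $\sigma$. Summing over $\sigma$ and using the assumed inequality,
\[
|V(F)|=|V(H)|\le\sum_{\sigma}\alpha(H,L,\sigma)\le\sum_\sigma\alpha(F,L,\sigma).
\]

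The only point needing care is the monotonicity of $\alpha$ under deleting edges, which goes in the favorable direction because independence is preserved when edges are removed. So I expect no real obstacle.
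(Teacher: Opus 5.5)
Your proof is correct and takes essentially the paper's route: the paper records this observation simply as a consequence of Lemma~\ref{lem:HJ-lem} applied to the colorable graphs $H-v$. Your separate treatment of spanning subgraphs, via monotonicity of $\alpha$ under edge deletion, just spells out the reduction to induced subgraphs that the paper mentions in its footnote.
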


\begin{obs}\label{obs:hall-partition}
If there exist partitions $S_1\sqcup \cdots \sqcup S_k$ of the set $S$ and $V_1\sqcup \cdots \sqcup V_k$  of the vertex set $V(H)$ such that the subgraph of $H$ induced by $V_i$ and the restricted list assignment $L(v) \cap S_i$ satisfy Hall's condition for all $i=1,\ldots,k$, then the pair $(H,L)$ satisfies Hall's condition.
\end{obs}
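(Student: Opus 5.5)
The plan is to combine the two partitions term by term. Write $H_i=H[V_i]$ and $L_i(v)=L(v)\cap S_i$ for $v\in V_i$. By hypothesis each $(H_i,L_i)$ satisfies \eqref{eq:Hall}. Fix an arbitrary subgraph $F$ of $H$; we must show $|V(F)|\le\sum_{\sigma\in S}\alpha(F,L,\sigma)$. Set $F_i=F[V(F)\cap V_i]$. Since $F_i$ is a subgraph of $F$ with vertex set $V(F)\cap V_i$, it is also a subgraph of $H_i$, because $H_i$ is induced on $V_i$ and $F\subseteq H$. So Hall's condition for $(H_i,L_i)$ applies to $F_i$ and gives
\[
|V(F)\cap V_i|\le \sum_{\sigma\in S}\alpha(F_i,L_i,\sigma)=\sum_{\sigma\in S_i}\alpha(F_i,L_i,\sigma),
\]
where the equality holds because $L_i$ takes values in $2^{S_i}$. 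For $\sigma\notin S_i$ no vertex has $\sigma$ in its list, so only the empty transversal exists and it contributes $0$. If we regard $(H_i,L_i)$ as having color set $S_i$, the middle expression is not needed and the same inequality follows directly.

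Next we compare transversals. Let $\sigma\in S_i$ and let $X$ be a $\sigma$-transversal of $(F_i,L_i)$. Then $X$ is independent in $F_i$. Since $F_i$ is an induced subgraph of $F$, the set $X$ is also independent in $F$. Moreover $\sigma\in L_i(v)\subseteq L(v)$ for every $v\in X$, so $X$ is a $\sigma$-transversal of $(F,L)$. Hence $\alpha(F_i,L_i,\sigma)\le\alpha(F,L,\sigma)$ for every $\sigma\in S_i$.

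Finally we sum over $i$. Because the $V_i$ partition $V(H)$, we have $|V(F)|=\sum_i|V(F)\cap V_i|$. Because the $S_i$ partition $S$, each color $\sigma$ lies in exactly one $S_i$, and therefore
\[
|V(F)|\le\sum_{i=1}^k\sum_{\sigma\in S_i}\alpha(F_i,L_i,\sigma)\le\sum_{i=1}^k\sum_{\sigma\in S_i}\alpha(F,L,\sigma)=\sum_{\sigma\in S}\alpha(F,L,\sigma).
\]
There is no real obstacle. The only points needing care are that $F_i$ is an admissible subgraph of $H_i$, which is why it is taken induced on $V(F)\cap V_i$, and that disjointness of the color classes prevents double counting. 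Subgraphs $F$ with some $V(F)\cap V_i$ empty are handled trivially.
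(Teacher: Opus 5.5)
Your proof is correct, and it is the same argument the paper has in mind: the paper gives no written proof, only the remark that the observation follows directly from the definition of Hall's condition. Your write-up supplies exactly that direct verification. You restrict an arbitrary subgraph $F$ to the induced pieces $F_i=F[V(F)\cap V_i]$, apply Hall's condition to $(H[V_i],L_i)$, use $\alpha(F_i,L_i,\sigma)\le\alpha(F,L,\sigma)$, and sum over the disjoint color classes $S_i$.
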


\begin{example}
In light of Observation~\ref{obs:hall-check}, one only needs to verify Hall's condition once, for $H$ itself. The reader is welcome to do so for the graph $H$ and the list assignment $L$ in Figure~\ref{fig:not-hall-L-sufficient}, as it is readily seen that $H-v$ is $L$-colorable for every $v\in V(H)$.
\end{example}

When lists are graphic as in the current context, we obtain the following facts analogous to results in Proposition~\ref{prop:basics}. Proofs are omitted, as these follow from the definition of Hall's condition. 
\begin{prop}\label{prop:basics-Hall}
    Suppose $H,G,X\in \mathcal{G}(V)$. 
\begin{enumerate}
\item If $G$ has an isolated vertex, then no pair $(H,G)$ satisfies Hall's condition.
    \item   If $(H,G)$ satisfies Hall's condition and $X$ is a spanning subgraph of $H$, then $(X,G)$ satisfies Hall's condition.
    \item If $(H,G)$ satisfies Hall's condition and $G$ is a spanning subgraph of $X$, then $(H,X)$ satisfies Hall's condition.
\end{enumerate}
\end{prop}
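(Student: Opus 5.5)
The three parts can be checked one at a time directly from the definition \eqref{eq:Hall}, with $L(v)=N_G(v)$. The plan is to track how the left side $|V(F)|$ and the terms $\alpha(F,L,\sigma)$ on the right side change when we pass to a subgraph $F$ of $H$, or when the lists change.

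For part 1, suppose $v$ is isolated in $G$, so $L(v)=N_G(v)=\emptyset$. Take $F$ to be the subgraph of $H$ consisting of the single vertex $v$. Then $|V(F)|=1$. For every color $\sigma$, the only candidate $\sigma$-transversals in $F$ are $\emptyset$ and $\{v\}$, and $\{v\}$ is not one because $\sigma\notin L(v)$. Hence $\alpha(F,L,\sigma)=0$ for all $\sigma$, the right side of \eqref{eq:Hall} is $0<1$, and Hall's condition fails for every $H$.

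For part 2, let $X$ be a spanning subgraph of $H$ and let $F$ be any subgraph of $X$. Since $X$ is a subgraph of $H$, so is $F$. Hall's condition for $(H,G)$ then gives \eqref{eq:Hall} for $F$ with the same lists $N_G(v)$, which is exactly what is required for $(X,G)$. The point is that Hall's condition quantifies over all subgraphs, and the subgraphs of $X$ form a subset of those of $H$.

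For part 3, let $G$ be a spanning subgraph of $X$, so $N_G(v)\subseteq N_X(v)$ for every $v$. Write $L_G$ and $L_X$ for the two graphic list assignments. Fix a subgraph $F$ of $H$ and a color $\sigma$. Any $\sigma$-transversal with respect to $L_G$ is an independent set $Y$ with $\sigma\in N_G(y)\subseteq N_X(y)$ for all $y\in Y$, so it is also a $\sigma$-transversal with respect to $L_X$. Therefore $\alpha(F,L_G,\sigma)\le \alpha(F,L_X,\sigma)$. Both graphs live on the same color set $S=V$, so summing over $\sigma\in V$ gives
\[
|V(F)|\le\sum_\sigma\alpha(F,L_G,\sigma)\le\sum_\sigma\alpha(F,L_X,\sigma).
\]

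There is no real obstacle. The only care needed is in part 3, to confirm that enlarging lists can only enlarge the set of $\sigma$-transversals, and that the index set of colors is the common vertex set $V$ in both cases.
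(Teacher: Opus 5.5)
Your proposal is correct and takes the same approach as the paper. The paper omits the proof, saying all three parts follow directly from the definition of Hall's condition. Your part-by-part check spells that out: the singleton subgraph at an isolated vertex, subgraphs of $X$ being subgraphs of $H$, and $\alpha(F,L,\sigma)$ not decreasing when the lists are enlarged.
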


As an immediate consequence of Theorem~\ref{thm:self-coloring} and Lemma~\ref{lem:HJ-lem}, we also obtain the following:

\begin{cor}\label{cor:self-coloring-cor}
For every graph $G$ with no isolated vertices, the pair $(G,G)$ satisfies Hall's condition.
\end{cor}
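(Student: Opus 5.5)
This follows at once from two results stated earlier, with no new combinatorics. Fix a graph $G$ on $V$ with no isolated vertices, and let $L$ be the graphic list assignment $L(v)=N_G(v)$. Theorem~\ref{thm:self-coloring} then says that $G$ is $G$-colorable; that is, $G$ is $L$-colorable as a graph on $V$.

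Next apply Lemma~\ref{lem:HJ-lem} with $H=G$ and this $L$. Every $L$-colorable graph satisfies Hall's condition with respect to $L$, so $(G,L)$ satisfies \eqref{eq:Hall} for every subgraph $F$ of $G$. By the definition of Hall's condition for graphic lists, this means exactly that $(G,G)$ satisfies Hall's condition.

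The only point to check is that Lemma~\ref{lem:HJ-lem} applies to an arbitrary list assignment, including graphic ones. It does, because the Hilton–Johnson argument is stated for general $L$. For completeness one could also recall why it holds. Take a proper $L$-coloring $\varphi$ of $G$ and any subgraph $F$. For each color $\sigma$, the set $\varphi^{-1}(\sigma)\cap V(F)$ is a $\sigma$-transversal in $F$, so
\[
|V(F)|=\sum_{\sigma}\bigl|\varphi^{-1}(\sigma)\cap V(F)\bigr|\le \sum_\sigma \alpha(F,L,\sigma).
\]
There is no real obstacle here: all the difficulty lies in Theorem~\ref{thm:self-coloring}, which we are allowed to assume.
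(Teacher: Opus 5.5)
Your proposal is correct and is exactly the paper's argument: the corollary is presented as an immediate consequence of Theorem~\ref{thm:self-coloring} ($G$ is $G$-colorable) combined with Lemma~\ref{lem:HJ-lem} (colorability implies Hall's condition). Your sketch of why the Hilton--Johnson lemma holds is also correct, though the paper does not include it.
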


\section{$\mathcal{G}$-Hall-colorable Graphs}\label{sec:prob1}

The current section focuses on Problem~\ref{prob:A}.  We refer to the graphs satisfying the property in Problem~\ref{prob:A} as {$\mathcal G$-Hall-colorable}, as defined below.

\begin{defn}\label{def:G-Hall-colorable}
A graph $H\in\mathcal G(V)$ is called \textit{$\mathcal G$-Hall-colorable} if, for every $G\in\mathcal G(V)$, whenever $(H,G)$ satisfies Hall's condition, $H$ is $G$-colorable.
\end{defn}

\begin{note}
Since the term Hall-colorable was used in Theorem~\ref{thm:HJ-thm} for arbitrary list assignments, we introduce the prefix $\mathcal G$ in Definition~\ref{def:G-Hall-colorable} to emphasize that the list is graphic.
\end{note}

Let $\mathcal{F}$ denote the forbidden set of induced subgraphs in  Hilton-Johnson's characterization of sufficiency of Hall's condition via Theorem~\ref{thm:HJ-thm}. 
Explicitly,
\begin{equation}\label{eq:forbidden}
     \mathcal{F} = \{C_k:k\geq 4 \} \cup \{K_4-e\}.
\end{equation}
Hilton-Johnson's characterization of Hall-colorable graphs relied on their finding that graphs in $\mathcal{F}$ can be assigned a fatal list $L$, which is capable of satisfying Hall's condition, yet being unable to color them. The following argument already shows that not all counterexamples in $\mathcal{F}$ remain when the list $L$ is asked to be graphic.
\begin{prop}\label{prop:diamond}
        The diamond graph $K_4-e$ is $\mathcal{G}$-Hall-colorable.
\end{prop}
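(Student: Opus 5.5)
The plan is to reduce everything to Theorem~\ref{thm:hedet-main} and Proposition~\ref{prop:basics}, so that only a single graph $G$ needs direct treatment. Write the diamond as $H$ on $V=\{a,b,c,d\}$, where $cd$ is the only non-edge. Fix $G\in\mathcal G(V)$ such that $(H,G)$ satisfies Hall's condition.

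First, by Proposition~\ref{prop:basics-Hall}(1), $G$ has no isolated vertex. Suppose $G$ has a spanning $\{1,2\}$-factor. Then Theorem~\ref{thm:hedet-main} says the complete graph $K$ on $V$ is $G$-colorable. Since $H$ is a spanning subgraph of $K$, Proposition~\ref{prop:basics}(2) shows that $H$ is $G$-colorable, and we are done.

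So it remains to classify the graphs on four vertices with no isolated vertex and no spanning $\{1,2\}$-factor. I would do this by a short enumeration of the graphs on four vertices with minimum degree at least $1$:
\begin{itemize}
\item $2K_2$, $P_4$, the paw and $K_4-e$ each have a perfect matching (for the paw, the pendant edge together with the opposite edge of the triangle).
\item $C_4$ and $K_4$ also have a perfect matching, and in fact a spanning cycle.
\item A triangle plus an isolated vertex is excluded, since it has an isolated vertex.
\end{itemize}
Hence the only exception is the star $K_{1,3}$, with centre $x$ and leaves $y_1,y_2,y_3$. This enumeration is the step most in need of care, but it is routine.

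For $G=K_{1,3}$ I would show directly that Hall's condition fails, which contradicts the assumption. Every leaf has list $L(y_i)=\{x\}$. Let $F$ be the subgraph of $H$ induced by $\{y_1,y_2,y_3\}$. Any three vertices of the diamond include $a$ or $b$, so $F$ is connected, either a path or a triangle. Therefore $\alpha(F,L,x)\le 2$. For $\sigma\neq x$, no vertex of $F$ has $\sigma$ in its list, because the leaves' lists are $\{x\}$; so $\alpha(F,L,\sigma)=0$. This gives $|V(F)|=3>2\ge\sum_\sigma\alpha(F,L,\sigma)$, violating \eqref{eq:Hall}.

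Consequently this case cannot occur, and $H$ is $G$-colorable whenever $(H,G)$ satisfies Hall's condition.
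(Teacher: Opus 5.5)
Your proposal is correct and takes essentially the same route as the paper. Both arguments exclude isolated vertices, handle every $G$ with a perfect matching (a spanning $\{1,2\}$-factor) by coloring $K_4$ and passing to the spanning subgraph, and then rule out $G\cong K_{1,3}$: its three leaves share the singleton list $\{x\}$, but $\alpha(K_4-e)=2$. Your enumeration of the four-vertex graphs just makes explicit the paper's claim that $K_{1,3}$ is the only remaining case.
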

\begin{proof}
 We have to show $K_4-e$ is $G$-colorable whenever $(K_4-e,G)$ satisfies Hall's condition, where both graphs are on the same vertex set. By Proposition~\ref{prop:basics-Hall}, $G$ cannot have an isolated vertex, for otherwise $(K_4-e,G)$ does not satisfy Hall's condition. Let $G$ be a graph on 4 vertices with no isolated vertices. If $G$ contains a perfect matching, then $G$ colors $K_4$ by swapping colors of the adjacent vertices in a perfect matching. Hence, $G$ colors the spanning subgraph $K_4-e$ by Proposition~\ref{prop:basics}. The only remaining case is $G\cong K_{1,3}$. However, in this case, $(K_4-e,K_{1,3})$ does not satisfy Hall's condition. Indeed, $K_{1,3}$ supplies three identical singleton lists to three vertices of $K_4-e$, and they must form an independent set of size 3 in $K_4-e$ to satisfy Hall's condition. But the independence number of $K_4-e$ is 2. This completes the proof.
 \end{proof}
 
 Note that the argument in the proof of Proposition~\ref{prop:diamond} also shows $C_4$ is $\mathcal G$-Hall-colorable. This means that the classical obstructions to Hall-colorability are no longer enough when the lists are graphic. We now proceed to provide a complete answer to Problem~\ref{prob:A}.

\subsection{Technical lemmas}

\begin{lem}\label{lem:2-color-Hall}
Let $H \in \mathcal{G}(V)$, and suppose $S$ is a set of colors such that $|S|\leq 2$. Let $L:V \to 2^S$ be some list assignment. If $(H,L)$ satisfies Hall's condition, then $H$ is $L$-colorable.
\end{lem}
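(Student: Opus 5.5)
Let $S=\{a,b\}$; the case $|S|\le 1$ is a degenerate version of the same argument. Applying Hall's condition to single vertices forces every list to be nonempty. Let $A$ be the set of vertices with $L(v)=\{a\}$ and $B$ the set with $L(v)=\{b\}$; every other vertex has list $\{a,b\}$. The plan is to reduce colorability to a statement about odd cycles and paths, and then contradict Hall's condition on a suitable subgraph $F$.

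First, $A$ is independent: if $u,v\in A$ are adjacent, take $F$ to be the edge $uv$. Then $\alpha(F,L,a)=1$ and $\alpha(F,L,b)=0$, while $|V(F)|=2$. The same holds for $B$. Since every component can be colored separately, we may assume $H$ is connected.

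\textbf{Case 1: $H$ is not bipartite.} Take an odd cycle in $H$, and let $F$ be the subgraph whose vertex set is that cycle and whose edges are the cycle edges, so $F=C_{2k+1}$. For each color $\sigma$, every $\sigma$-transversal in $F$ is an independent set of $C_{2k+1}$, so $\alpha(F,L,\sigma)\le k$. This gives $\sum_\sigma\alpha\le 2k<2k+1$, which violates \eqref{eq:Hall}. Hence $H$ is bipartite. The point here is that with only two colors, any odd cycle is already a Hall violation as a subgraph.

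\textbf{Case 2: $H$ is bipartite.} Let $H$ be connected with bipartition $(X,Y)$. There are two proper $2$-colorings: $X\mapsto a,\ Y\mapsto b$, and the swap. Any proper coloring from $\{a,b\}$ of a connected bipartite graph is one of these two, so $H$ is $L$-colorable unless both fail. Suppose both fail. Then the first coloring fails at some precolored vertex, and so does the second.

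Suppose first that a precolored vertex lies in both parts of the failure, i.e., there are precolored vertices $u,w$ whose forced colors have the wrong parity relation. This means $u,w$ have the same singleton list and odd distance, or different singleton lists and even distance. Take a shortest $u$--$w$ path $P$ as the subgraph $F$.

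Suppose instead that the failure involves a single vertex — for example, every precolored vertex conflicts with one coloring and nothing conflicts with the other. Then one of the two colorings works. So a genuine failure produces such a pair $u,w$.

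Now check Hall's condition on $P$, a path on $m$ vertices. If $m$ is even (odd distance) and $u,w\in A$, then $\alpha(P,L,a)\le m/2$, and the $a$-independent sets of maximum size cannot contain both endpoints. In fact, an independent set containing both ends of an even path has size at most $m/2-1$... The cleaner approach is to choose $F$ as the path itself and compute directly. Since $\alpha(P)=\lceil m/2\rceil$, the endpoints' singleton lists reduce the count:

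1. If $u,w\in A$ and $m$ is even, then $\alpha(P,L,b)\le m/2-1$ because $b$ avoids both ends, and $\alpha(P,L,a)\le m/2$. The sum is $m-1<m$.
2. If $u\in A$, $w\in B$, and $m$ is odd, then $\alpha(P,L,a)\le (m-1)/2$, since a maximum independent set of an odd path must contain both ends and therefore cannot avoid $w$. Symmetrically for $b$. The sum is $m-1<m$.

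Either way \eqref{eq:Hall} is violated, a contradiction.

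The main care needed is in item 2 above: bounding $\alpha$ on a path whose endpoint lists exclude one color. This follows from the fact that an odd path's unique maximum independent set contains both endpoints.
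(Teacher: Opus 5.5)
Your proof is correct and is essentially the paper's argument: an odd cycle, taken as a subgraph, violates Hall's condition. In the connected bipartite case, a pair of singleton-list vertices with the wrong parity relation gives a path $P$ on $m$ vertices with $\sum_\sigma\alpha(P,L,\sigma)\le m-1$. The paper chooses that pair at minimum distance so the interior lists are $\{a,b\}$ and it can compute the $\alpha$'s exactly, whereas your upper bounds hold for any such path and make that choice unnecessary; your paragraph on why both $2$-colorings failing yields such a pair is garbled, though, and should simply say that each singleton-list vertex conflicts with exactly one of the two colorings, so the two failures occur at distinct vertices $u\neq w$.
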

\begin{proof}
If $|S| = 1$, then $H$ must be an edgeless graph where each vertex list is a singleton, in which case the result holds. So suppose $S = \{a,b\}$. First, if $(H,L)$ satisfies Hall's condition, then $H$ is bipartite. Indeed, if $H$ contains an odd cycle $C$ of order $r$, then for each  $\sigma \in \{a,b\}$, we have $\alpha(C,L,\sigma) \leq \alpha(C) = \frac{r-1}{2}$. Consequently, we have
\[
\sum_{\sigma \in \{a,b\}} \alpha(C,L,\sigma) \leq 2\cdot\frac{r-1}{2} < r,
\]
contrary to Hall's condition.

We may assume,  without loss of generality, $H$ has a single connected component. Let $V=A\sqcup B$ be the bipartition of $H$. We prove the contrapositive. Suppose $H$ is not $L$-colorable. Then at least two vertices $u,v \in V$ receive a singleton list such that either both $u,v$ lie in the same partition and $L(u) \neq L(v)$ or vertices $u,v$ lie in distinct partitions and $L(v) = L(u)$. Pick such a pair at minimum distance and let $P$ be the $u–v$ path of order $r$. Every internal vertex of $P$ must have the list $\{a,b\}$, for otherwise there would be a shorter path. If $L(v) \neq L(u)$, then $P$ is of odd order. In this case, 
\[
\alpha(P,L,a)=\alpha(P,L,b)=\frac{r-1}{2},
\]
which implies
\[
\sum_{\sigma\in\{a,b\}}\alpha(P,L,\sigma)=r-1.
\]
Next suppose, without loss of generality, $L(u)= L(v) = \{a\}$. Then $P$ is of even order. In this case,
\[
\alpha(P,L,a)=\frac r2
\qquad\text{and}\qquad
\alpha(P,L,b)=\frac{r-2}{2},
\]
and again
\[
\sum_{\sigma\in\{a,b\}}\alpha(P,L,\sigma)=r-1.
\]
Both cases contradict Hall's condition, as required. 
\end{proof}

As we have remarked earlier, the characterization of Hall-colorable graphs in Theorem~\ref{thm:HJ-thm} relied on carefully chosen list assignments for graphs in $\mathcal F$ so that they satisfy Hall's condition but obstruct proper coloring.  The technical lemma below constructs this obstruction for cycles by using a total of 3 colors. This construction is useful for a later result.

\begin{lem}\label{lem:cycle-Hall-not-colorable}
Let $k\ge 4$ and let $C_k$ be the cycle 
$v_1–v_2–\cdots–v_{k}–v_1$.
Let $S=\{a,b,c\}$ and define a list assignment $L:V(C_k)\to 2^S$ by
\[
L(v_1)=\{c\},\qquad L(v_2)=\{a,c\},\qquad L(v_i)=\{a,b\}\,\,\text{ for all }3\le i\le k-1,
\]
and
\[
L(v_{k})=
\begin{cases}
\{b,c\}, & \text{if $k$ is even},\\[2mm]
\{a,c\}, & \text{if $k$ is odd}.
\end{cases}
\]
Then $(C_k,L)$ satisfies Hall's condition, but $C_k$ is not $L$-colorable.
\end{lem}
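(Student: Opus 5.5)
The proof has two parts: non-colorability by forced propagation, and Hall's condition via Observation~\ref{obs:hall-check}.

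\textbf{Non-colorability.} Any $L$-coloring $\varphi$ must have $\varphi(v_1)=c$, since $L(v_1)=\{c\}$. Then $\varphi(v_2)=a$, and along the path $v_3,\dots,v_{k-1}$ with lists $\{a,b\}$ the colors are forced to alternate. This gives $\varphi(v_i)=a$ for odd $i$ and $\varphi(v_i)=b$ for even $i$, for $2\le i\le k-1$.
\begin{itemize}
\item If $k$ is even, $v_{k-1}$ has odd index and gets $a$. Then $v_k$, with list $\{b,c\}$, is adjacent to $v_{k-1}$ colored $a$ and to $v_1$ colored $c$, so it must take $b$. This looks fine, so I would recheck the parity. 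The forced color $a$ sits at index $2$ (even), so in fact $\varphi(v_i)=a$ for even $i$ and $b$ for odd $i$. For $k$ even, $v_{k-1}$ is odd and gets $b$; then $v_k$ sees $b$ and $c$ and has no color left.
\item If $k$ is odd, $v_{k-1}$ is even and gets $a$; then $v_k$, with list $\{a,c\}$, sees $a$ and $c$ and again has no color left.
\end{itemize}
(For $k=4$ there are no middle vertices: $v_3$ has list $\{a,b\}$, is forced to $b$, and the argument is the same.)

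\textbf{Hall's condition.} By Observation~\ref{obs:hall-check}, it suffices to show that $C_k-v$ is $L$-colorable for every $v$, and then check the single inequality for $C_k$ itself. Each $C_k-v$ is a path, so I would color it greedily starting next to the deleted vertex, where there is freedom:
\begin{itemize}
\item Deleting $v_k$: the forced coloring above works on the path $v_1,\dots,v_{k-1}$.
\item Deleting $v_1$: color $v_k$ with $c$, then $v_{k-1},\dots,v_3$ alternately from $\{a,b\}$, and finally $v_2$ gets $c$ or $a$. Since $v_2$'s only neighbor in the path is $v_3\in\{a,b\}$, the color $c$ is always available.
\item Deleting $v_j$ with $2\le j\le k-1$: propagate forward from $v_1$ up to $v_{j-1}$ and backward from $v_k$ down to $v_{j+1}$. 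For the backward part, give $v_k$ a color different from $c$ (which is $v_1$'s color) and continue alternating. Two-element lists on a path propagate without conflict, so each direction succeeds.
\end{itemize}
A small case check is needed at the ends, which I will do explicitly.

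It remains to show $k\le\alpha(a)+\alpha(b)+\alpha(c)$, where each $\alpha$ is computed on $C_k$ with $L$.
\begin{itemize}
\item For $k$ even: the $c$-vertices are $v_1,v_2,v_k$, pairwise adjacent along $v_k v_1 v_2$, so $\alpha(c)=1$. The $a$-vertices form the path $v_2\dots v_{k-1}$ of $k-2$ vertices, so $\alpha(a)=(k-2)/2$. The $b$-vertices form the path $v_3\dots v_k$ of $k-2$ vertices, so $\alpha(b)=(k-2)/2$. The total is $k-1$, which is less than $k$. This seems to violate Hall's condition, so I must have miscounted.
\end{itemize}
Recheck with the figure ($k=4$): $v_1=\{c\}$, $v_2=\{a,c\}$, $v_3=\{a,b\}$, $v_4=\{b,c\}$. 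In the $4$-cycle, $v_4$ and $v_2$ are not adjacent, so the $c$-vertices are not pairwise adjacent: $v_2$ and $v_k$ are nonadjacent for $k\ge4$. Hence $\alpha(c)=2$ and the total is $k$.
\begin{itemize}
\item For $k$ odd: $\alpha(c)=2$ via $\{v_2,v_k\}$. The $a$-vertices form the path $v_2\dots v_k$ of $k-1$ vertices (even), giving $\alpha(a)=(k-1)/2$. The $b$-vertices form the path $v_3\dots v_{k-1}$ of $k-3$ vertices, giving $\alpha(b)=(k-3)/2$. The total is $k$.
\end{itemize}

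The main obstacle is organizing the vertex-deletion colorings cleanly and uniformly in $j$.
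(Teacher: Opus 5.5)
Your proposal is correct and follows the same route as the paper. Non-colorability comes from the forced propagation $v_1\mapsto c$, $v_2\mapsto a$, then alternation along $v_3,\dots,v_{k-1}$, which leaves $v_k$ with no color. Hall's condition comes from Observation~\ref{obs:hall-check} together with the same counts, $\alpha(C_k,L,c)=2$ and $\alpha(C_k,L,a)+\alpha(C_k,L,b)=k-2$; your greedy argument that each path $C_k-v$ is $L$-colorable simply spells out what the paper calls easily verified. In the final write-up, remove the two false starts (the initial parity slip and the count $\alpha(C_k,L,c)=1$), since only the corrected versions are needed.
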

\begin{proof}
First, we show that $(C_k,L)$ satisfies Hall's condition. We proceed in the manner informed by Observation~\ref{obs:hall-check}. One can easily verify that for every vertex $v\in V(C_k)$ the graph $C_k-v$ is $L$-colorable.
Then by Lemma~\ref{lem:HJ-lem}, every connected induced subgraph of $C_k-v$ satisfies Hall's
condition. We now check that Hall's condition is satisfied for $C_k$ itself. We compute $\alpha(C_k,L,\sigma)$ for $\sigma\in\{a,b,c\}$. For all $k \geq 4$, $\alpha(C_k,L,c)= 2$. 
If $k$ is even, $\alpha(C_k,L,a)=\frac{k-2}{2}$ and $\alpha(C_k,L,b)=\frac{k-2}{2}$. 
If $k$ is odd,
$\alpha(C_k,L,a)=\frac{k-1}{2}$ and $\alpha(C_k,L,b)=\frac{k-3}{2}$. In either case, 
 \[
  |V(C_k)|=k=\sum_{\sigma\in S}\alpha(C_k,L,\sigma),
 \]
meaning $(C_k,L)$ satisfies Hall's condition.
 
Next, we show $C_k$ is not $L$-colorable. Suppose for contradiction that $\varphi$ is a proper $L$-coloring of $C_k$. We must have $\varphi(v_1)=c$ and $\varphi(v_2)=a$.
For each $i=3,\dots,k-1$, $\varphi(v_i)$ alternates assigning $b$ or $a$.
Consequently,
\[
\varphi(v_{k-1})=
\begin{cases}
b, & \text{if $k$ is even},\\
a, & \text{if $k$ is odd}.
\end{cases}
\]
By construction, $L(v_{k}) = \{ \varphi(v_{k-1}), \varphi(v_1)\}$. But $v_{k}$ is adjacent to both $v_{k-1}$ and $v_1$, which yields a contradiction.
\end{proof}

\subsection{Key lemma}

\begin{notation}
Given $H\in \mathcal{G}(V)$ and $X\subseteq V$, we denote by $H[X]$ the induced subgraph of $H$ on the vertex set $X$. The set $N_H(X)$ is defined by $\bigcup_{x \in X}N_H(x)$. We use the symbol $\sqcup$ to denote a disjoint union.
\end{notation}

\begin{lem}\label{lem:no-12-factor-decomposition}
For every graph $G \in \mathcal{G}(V)$, there is a partition $V=A\sqcup B\sqcup R
$ such that
\begin{enumerate}
    \item $A$ is independent;
    \item $B=N_G(A)$, and there is a matching from $B$ into $A$ which saturates $B$;
    \item $G[R]$ has a spanning $\{1,2\}$-factor.
\end{enumerate}
\end{lem}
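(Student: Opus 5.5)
The plan is to choose $A$ as an independent set of maximum \emph{deficiency}, and to use the classical criterion for spanning $\{1,2\}$-factors (Tutte's theorem on perfect $2$-matchings).

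\textbf{The criterion.} A graph $F$ has a spanning $\{1,2\}$-factor if and only if $|N_F(I)|\ge |I|$ for every independent set $I\subseteq V(F)$. I would justify it through the bipartite double cover $F\times K_2$, with vertex classes $V'$ and $V''$ and an edge $x'y''$ whenever $xy\in E(F)$.
\begin{enumerate}
\item A perfect matching of $F\times K_2$ is the same as a permutation $\pi$ of $V(F)$ with $v\pi(v)\in E(F)$ for all $v$.
\item The cycles of such a $\pi$ are $2$-cycles, which give single edges, or longer cycles, which give cycles of $F$. An even cycle may be split into a matching if desired. So $\pi$ yields a spanning $\{1,2\}$-factor, and conversely a spanning $\{1,2\}$-factor yields such a $\pi$.
\item By Hall's theorem, the perfect matching exists iff $|N_F(X)|\ge |X|$ for all $X\subseteq V(F)$.
\item This reduces to independent sets by a standard step. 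Suppose $X$ is a minimal violator with an edge inside it. Replacing $X$ by a suitable independent set, obtained by discarding vertices whose neighbourhoods are already covered, keeps the violation. This is the one technical point I would check carefully, or simply cite Tutte.
\end{enumerate}

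\textbf{Choice of $A$.} Define $\operatorname{def}(I)=|I|-|N_G(I)|$ for independent $I$. Note $\operatorname{def}(\emptyset)=0$. Choose an independent $A$ maximizing $\operatorname{def}(A)$, so $\operatorname{def}(A)\ge 0$. Set $B=N_G(A)$ and $R=V\setminus(A\cup B)$. Since $A$ is independent, $A\cap B=\emptyset$, so this is a partition and item 1 holds. Isolated vertices of $G$ cause no trouble: they can only increase the deficiency, and they end up in $A$.

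\textbf{Item 2.} Apply Hall's theorem to the bipartite graph between $B$ and $A$. Suppose some $T\subseteq B$ has $A'=N_G(T)\cap A$ with $|A'|<|T|$. Then $A\setminus A'$ is independent, and every neighbour of it lies in $B\setminus T$. Hence
\[
\operatorname{def}(A\setminus A')\ \ge\ |A|-|A'|-|B|+|T|\ >\ \operatorname{def}(A),
\]
which contradicts maximality. So a matching from $B$ into $A$ saturating $B$ exists.

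\textbf{Item 3.} Suppose $G[R]$ has no spanning $\{1,2\}$-factor. By the criterion there is an independent $I\subseteq R$ with $|N_{G[R]}(I)|<|I|$. No vertex of $R$ is adjacent to $A$, since $N_G(A)=B$. So $A\cup I$ is independent, and
\[
N_G(A\cup I)\subseteq B\cup N_{G[R]}(I).
\]
Therefore
\[
\operatorname{def}(A\cup I)\ \ge\ |A|+|I|-|B|-|N_{G[R]}(I)|\ >\ \operatorname{def}(A),
\]
again a contradiction. If $R=\emptyset$, item 3 holds trivially.

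The main obstacle is the criterion itself, in particular the reduction from arbitrary $X$ to independent $X$. Everything else is a short deficiency-maximality argument.
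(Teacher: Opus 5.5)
Your proposal is correct and is essentially the paper's proof: you maximize $|I|-|N_G(I)|$ over independent $I$, then contradict maximality with $A\setminus(N_G(T)\cap A)$ for item~2 and with $A\cup I$ for item~3, and admitting $I=\varnothing$ in the maximization just removes the paper's preliminary case split. The paper cites the independent-set criterion (Fact~\ref{fact:12-factor}) rather than proving it; if you want your step~4 explicitly, take $I=X\setminus N_F(X)$, which is independent with $N_F(I)\subseteq N_F(X)\setminus X$, so $|N_F(I)|\le |N_F(X)|-|X\cap N_F(X)|<|X|-|X\cap N_F(X)|=|I|$.
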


The proof of Lemma~\ref{lem:no-12-factor-decomposition} relies on the following characterization from \cite{self-coloring}, which can also be derived from a classical result of Tutte \cite{Tutte1953} or simply from Hall's marriage theorem \cite{Hall1935}.
\begin{fact}\label{fact:12-factor}
A graph $G$ has a spanning $\{1,2\}$-factor if and only if $|N_G(I)|\geq |I|$ for every independent set $I$ in $G$.
\end{fact}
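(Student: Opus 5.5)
The plan is to translate the existence of a spanning $\{1,2\}$-factor into a perfect matching problem in an auxiliary bipartite graph, apply Hall's marriage theorem there, and then show that Hall's condition only needs checking on independent sets of $G$.

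\textbf{Step 1: reformulation as a permutation.} I will show that $G$ has a spanning $\{1,2\}$-factor if and only if there is a bijection $\pi:V\to V$ with $v\pi(v)\in E(G)$ for all $v$, that is, an SDR of the family $\{N_G(v)\}_{v\in V}$.
\begin{enumerate}[(i)]
\item Given such a factor, define $\pi$ to swap the two ends of each single edge and to map each vertex of a cycle component to its successor under a fixed orientation of that cycle.
\item Conversely, since $G$ is simple, $\pi$ has no fixed points. Each $2$-cycle of $\pi$ is an edge of $G$, and each cycle of $\pi$ of length at least $3$ traces a cycle in $G$. The orbits of $\pi$ partition $V$, so these edges and cycles form a spanning $\{1,2\}$-factor.
\end{enumerate}

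\textbf{Step 2: Hall's theorem.} Form the bipartite graph $D$ with parts $V$ and a disjoint copy $V'$, where $v$ is joined to $u'$ exactly when $uv\in E(G)$. A bijection $\pi$ as in Step 1 is precisely a perfect matching of $D$. By Hall's marriage theorem \cite{Hall1935}, $D$ has a perfect matching if and only if $|N_G(X)|\ge |X|$ for every $X\subseteq V$. This condition trivially implies the one in the Fact, which gives necessity.

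\textbf{Step 3: reduction to independent sets (the main step).} For sufficiency, assume $|N_G(I)|\ge|I|$ for every independent $I$, and suppose for contradiction that some $X\subseteq V$ has $|N_G(X)|<|X|$. Set $I=X\setminus N_G(X)$, the vertices of $X$ with no neighbour in $X$.
\begin{enumerate}[(i)]
\item $I$ is independent: two adjacent vertices of $I$ would each lie in $N_G(X)$.
\item $N_G(I)\cap X=\emptyset$: if $y\in X$ were adjacent to $x\in I$, then $x\in N_G(X)$, a contradiction.
\item Hence $N_G(I)\subseteq N_G(X)\setminus X$, and
\[
|N_G(I)|\le |N_G(X)|-|X\cap N_G(X)|<|X|-|X\cap N_G(X)|=|I|.
\]
\end{enumerate}
This contradicts the hypothesis applied to the independent set $I$. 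Therefore Hall's condition holds in $D$, $D$ has a perfect matching, and Step 1 yields the spanning $\{1,2\}$-factor.

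I expect the only genuinely non-routine point to be the choice $I=X\setminus N_G(X)$ in Step 3. The remaining care is in Step 1, where simplicity of $G$ is needed to rule out fixed points of $\pi$.
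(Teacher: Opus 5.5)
Your proof is correct. It is the derivation from Hall's marriage theorem that the paper only alludes to: the paper gives no proof of this Fact, it just cites earlier work and remarks that the Fact follows from Tutte's theorem or from Hall's theorem. Your Step~1 is the SDR-versus-$\{1,2\}$-factor equivalence underlying Theorem~\ref{thm:hedet-main}, and the choice $I=X\setminus N_G(X)$ in Step~3 is a valid way to reduce Hall's condition in the auxiliary bipartite graph to independent sets of $G$.
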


\begin{proof}[Proof of Lemma~\ref{lem:no-12-factor-decomposition}]
If $G$ has a spanning $\{1,2\}$-factor, then we let $A=B=\varnothing$ and $R=V$. Hence suppose $G$ does not contain a spanning $\{1,2\}$-factor. By Fact~\ref{fact:12-factor}, there exists an independent set $I$ such that $|I| > |N_G(I)|$.  Define
\[
\eta(G)
=
\max\bigl\{
|I|-|N_G(I)|:
I\subseteq V\text{ is independent}
\bigr\},
\]
and choose an independent set $A$ such that $|A| - |N_G(A)|=\eta(G) >0$. Define the sets
\[
B = N_G(A) \qquad \text{and} \qquad R = V(G) \setminus (A \cup B).
\]

We first show that there is a matching from $B$ into $A$ which saturates $B$. For $U \subseteq B$, let $N_A(U) = N_G(U) \cap A$. By Hall's marriage theorem, it suffices to show $|N_A(U)| \geq |U|$. Suppose, to the contrary, there exists $U \subseteq B$ such that  $|N_A(U)| < |U|$. Note that the set $A'= A \setminus N_A(U)$
is independent and $N_G(A') \subseteq B \setminus U$. It follows that
\[
\begin{aligned}
|A'|-|N_G(A')|
&\geq
|A|-|N_A(U)|-|B|+|U|  \\
&=
\eta(G)+|U|-|N_A(U)|  \\
&>\eta(G),
\end{aligned}
\]
which contradicts the maximality of $\eta(G)$. Therefore, we conclude that there is a matching from $B$ into $A$ which saturates $B$. 

Next, we show $G[R]$ has a spanning $\{1,2\}$-factor. Again, for the sake of contradiction, suppose not. Then there is an independent set $W\subseteq R$ such that $|N_{G[R]}(W)|<|W|$.
By definition, there are no edges between $A$ and $R$, so $A \cup W$ is independent. Note that $N_G(A\cup W)\subseteq B\cup N_{G[R]}(W)$.
Then we have
\[
\begin{aligned}
|A\cup W|-|N_G(A\cup W)|
&\geq
|A|+|W|-|B|-|N_{G[R]}(W)|  \\
&>
|A|-|B|  \\
&=\eta(G),
\end{aligned}
\]
which again contradicts the maximality of $\eta(G)$.
We conclude $G[R]$ has a spanning
$\{1,2\}$-factor.
\end{proof}

\subsection{Sufficient conditions}
Recall the forbidden graph set $\mathcal F=\{C_k:k\geq 4\}\cup\{K_4-e\}$ as defined in \eqref{eq:forbidden}.

\begin{thm}\label{thm:G-Hall-characterization}
Suppose that for every induced subgraph $F \subseteq H$, where $F \in \mathcal{F}$, we have $|V(H)\setminus V(F)|\leq 2$. Then $H$ is $\mathcal G$-Hall-colorable.
\end{thm}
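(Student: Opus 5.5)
The plan is to combine the structural decomposition of Lemma~\ref{lem:no-12-factor-decomposition} with Theorem~\ref{thm:HJ-thm} and Lemma~\ref{lem:2-color-Hall}. The idea is to split the colouring of $H$ into three parts that use pairwise disjoint colour sets, so that all the difficulty is pushed onto an induced subgraph $H[A]$ whose lists come from a very small set $B$.

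Let $G\in\mathcal G(V)$ be such that $(H,G)$ satisfies Hall's condition. By Proposition~\ref{prop:basics-Hall}, $G$ has no isolated vertices.

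\textbf{Step 1 (decomposition and the easy part).} Apply Lemma~\ref{lem:no-12-factor-decomposition} to $G$ to get $V=A\sqcup B\sqcup R$, where:
\begin{enumerate}[(i)]
\item $A$ is independent and $B=N_G(A)$;
\item there is a matching $b\mapsto a_b$ from $B$ into $A$ saturating $B$;
\item $G[R]$ has a spanning $\{1,2\}$-factor.
\end{enumerate}
We colour the three parts as follows.
\begin{itemize}
\item On $R$: by Theorem~\ref{thm:hedet-main}, the complete graph on $R$ is $G[R]$-colourable. So the vertices of $R$ receive pairwise distinct colours from $R$, each in its $G$-neighbourhood.
\item On $B$: give each $b\in B$ the colour $a_b\in N_G(b)$. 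These colours are pairwise distinct and lie in $A$.
\item On $A$: each $a\in A$ has $N_G(a)\subseteq B$, so it must be coloured from $B$.
\end{itemize}
The three colour palettes $R$, $A$, $B$ are disjoint. Within $R\cup B$ all colours are distinct. Hence the combined colouring is proper for $H$ as soon as $H[A]$ admits a proper colouring from the lists $L(a)=N_G(a)\subseteq B$. If $A=\varnothing$ (i.e.\ $G$ has a spanning $\{1,2\}$-factor), we are already done.

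\textbf{Step 2 (Hall's condition passes to $H[A]$).} Every subgraph $F$ of $H[A]$ is a subgraph of $H$. For $\sigma\in V$, a $\sigma$-transversal in $F$ uses only vertices of $A$. Moreover, $\alpha(F,L,\sigma)=0$ unless $\sigma\in B$. So \eqref{eq:Hall} for $(H,G)$ gives \eqref{eq:Hall} for $(H[A],L)$ with colour set $B$.

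\textbf{Step 3 (finish via the hypothesis).} We split into two cases.
\begin{itemize}
\item If $H[A]$ contains no induced member of $\mathcal F$, then $H[A]$ is Hall-colourable by Theorem~\ref{thm:HJ-thm}, and Step~2 finishes the proof.
\item Otherwise, some induced $F\in\mathcal F$ lies in $H[A]$, and hence is induced in $H$. The hypothesis gives $|V\setminus V(F)|\le 2$, so $|B|\le |B|+|R|\le |V\setminus A|\le 2$. Now the colour set for $H[A]$ has at most two colours, and Lemma~\ref{lem:2-color-Hall} together with Step~2 shows that $H[A]$ is $L$-colourable.
\end{itemize}

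The only delicate point I expect is Step~2. One must check that restricting the lists of $A$ to $B$ loses nothing in \eqref{eq:Hall}. This holds precisely because $N_G(A)=B$, so no vertex of $A$ ever lists a colour outside $B$.
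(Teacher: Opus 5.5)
Your proposal is correct and follows essentially the same route as the paper. You use the decomposition $V=A\sqcup B\sqcup R$ from Lemma~\ref{lem:no-12-factor-decomposition}, colour $R$ via the $\{1,2\}$-factor and $B$ via the matching, and colour $H[A]$ either by Theorem~\ref{thm:HJ-thm} or, when an induced member of $\mathcal F$ forces $|B|\le 2$, by Lemma~\ref{lem:2-color-Hall}. Your Step~2, which checks that Hall's condition restricts to $H[A]$ because $N_G(A)=B$, makes explicit a point the paper uses only implicitly.
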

\begin{proof}
    Let $G$ be a graph
on $V(H)$ such that $(H,G)$ satisfies Hall's condition. We prove that
$H$ is $G$-colorable. By Proposition~\ref{prop:basics-Hall}, $G$ has no isolated vertex. By Theorem~\ref{thm:hedet-main}, we may assume that $G$ has no spanning $\{1,2\}$-factor, for otherwise $G$ colors every graph on $V(G)$, and there is nothing to prove. By Lemma~\ref{lem:no-12-factor-decomposition}, we may write $V(G)=A\sqcup B\sqcup R$, where $A$ is independent in $G$, $B=N_G(A)$ such that there is a matching from
$B$ into $A$ which saturates $B$, and $G[R]$ has a spanning
$\{1,2\}$-factor.

Note that we have $N_G(v) \subseteq B$ for all $v \in A$. We first show $H[A]$ can be properly colored. If $H[A]$ contains no induced member of $\mathcal F$, this follows from Theorem~\ref{thm:HJ-thm}.
Otherwise, let $F\in\mathcal F$ be induced in $H[A]$. The graph $F$ is also induced
in $H$, and $B\subseteq V(H)\setminus V(F)$. Hence, we have
\[
|B|\leq |V(H)\setminus V(F)|\leq 2.
\]
Then we must have $|N_G(v)| \leq 2$ for all $v\in A$.  By Lemma~\ref{lem:2-color-Hall}, we obtain that $H[A]$ admits a proper coloring using colors in $B$. Let $\psi : A \to B$ be this proper coloring. Since $G[R]$ contains a spanning $\{1,2\}$-factor, any graph $H[R]$ admits a proper coloring using colors in $R$, by Theorem~\ref{thm:hedet-main}. Let $\pi : R \to R$ be this proper coloring. Finally, let $\mu: B \to A$ be such that $\{(b,\mu(b)) :b\in B\}$ is a matching from $B$ into $A$ which saturates $B$. Then $\mu$ is a proper coloring for any graph $H[B]$ using colors in $A$. To summarize, we define $\varphi:V(H)\to V(H)$ by
\[
\varphi(v)=
\begin{cases}
\psi(v),& \text{if } v\in A,\\
\mu(v),&\text{if }v\in B,\\
\pi(v),&\text{if }v\in R.
\end{cases}
\]
By construction, we have $\varphi(v)\in N_G(v)$ for all $v\in V(H)$, and $\psi, \mu, \pi$ are proper colorings with pairwise disjoint codomains. Therefore, $\varphi$ is a proper $G$-coloring of $H$.
\end{proof}

\subsection{Necessary conditions}

\begin{thm}\label{thm:not-Hall}
Suppose that there exists an induced subgraph $F \subseteq H$, where $F \in \mathcal{F}$, such that $|V(H)\setminus V(F)|\geq 3$ and $|V(H)\setminus V(F)| \neq 4$. Then $H$ is not $\mathcal G$-Hall-colorable.
\end{thm}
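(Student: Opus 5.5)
The plan is to build, for the given induced $F\in\mathcal F$, an explicit graph $G\in\mathcal G(V)$ for which $(H,G)$ satisfies Hall's condition but $H$ is not $G$-colorable. The idea is to make the $G$-neighbourhoods of the vertices of $F$ realise a fatal $3$-color list on $F$, with the three colors being three vertices of $W:=V(H)\setminus V(F)$. All other vertices then get lists that are easy to color. Write $m=|W|$; by hypothesis $m\ge 3$ and $m\ne 4$.

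\textbf{Construction of $G$.} Pick three distinct vertices $a,b,c\in W$ and set $W'=W\setminus\{a,b,c\}$, so that $|W'|=m-3\in\{0\}\cup\{2,3,\dots\}$.
\begin{itemize}
\item If $F\cong C_k$ with $k\ge 4$, label it $v_1\cdots v_k$ and join each $v_i$ in $G$ exactly to the colors in the list $L(v_i)\subseteq\{a,b,c\}$ of Lemma~\ref{lem:cycle-Hall-not-colorable}.
\item If $F\cong K_4-e$, use the list of Figure~\ref{fig:not-hall-L-sufficient}, whose underlying graph is exactly the diamond, in the same way.
\item If $W'\neq\varnothing$, make $G[W']$ complete. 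This has a spanning $\{1,2\}$-factor, since a single edge works when $|W'|=2$ and a Hamilton cycle works when $|W'|\ge 3$.
\end{itemize}
There are no other edges. In particular, no vertex of $F$ is adjacent in $G$ to $W'$, and $N_G(a),N_G(b),N_G(c)\subseteq V(F)$. The exclusion $m\neq 4$ is used exactly here: it rules out $|W'|=1$, where the lone vertex of $W'$ would have to be given a list somehow. No vertex of $G$ is isolated. Indeed $c\sim v_1$, $a\sim v_2$ and $b\sim v_3$ (in the diamond: $c\sim v_1$, $a\sim v_2$, $b\sim v_4$), and every vertex of $F$ has a nonempty list.

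\textbf{Non-colorability.} In any proper $G$-coloring $\varphi$ of $H$, each vertex of $F$ receives a color from its list $L(v)\subseteq\{a,b,c\}$. Since $F$ is a subgraph of $H$, the restriction $\varphi|_{V(F)}$ is a proper $L$-coloring of $F$. This contradicts Lemma~\ref{lem:cycle-Hall-not-colorable} in the cycle case, and the property of Figure~\ref{fig:not-hall-L-sufficient} in the diamond case.

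\textbf{Hall's condition.} I will apply Observation~\ref{obs:hall-partition} with the following parts.
\begin{itemize}
\item Vertex part $V_1=V(F)$ with color part $S_1=\{a,b,c\}$. The restricted lists are exactly the fatal lists, and these satisfy Hall's condition by Lemma~\ref{lem:cycle-Hall-not-colorable} or by the diamond example.
\item Vertex part $V_2=\{a,b,c\}$ with color part $S_2=V(F)$. Here $H[V_2]$ is colorable by the distinct representatives $c\mapsto v_1$, $a\mapsto v_2$, $b\mapsto v_3$ (or $v_4$ for the diamond), so Hall's condition holds by Lemma~\ref{lem:HJ-lem}.
\item Vertex part $V_3=W'$ with color part $S_3=W'$. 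Here $H[W']$ is $G[W']$-colorable by Theorem~\ref{thm:hedet-main}, so Hall's condition again holds by Lemma~\ref{lem:HJ-lem}.
\end{itemize}
The color parts partition $V$, and the restriction of each $N_G(v)$ to the relevant part is exactly the list used above, since $N_G(v)$ lies entirely inside its part. Hence $(H,G)$ satisfies Hall's condition.

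\textbf{Main obstacle.} The step I expect to need the most care is verifying that the diamond list of Figure~\ref{fig:not-hall-L-sufficient} really satisfies Hall's condition. The paper leaves this to the reader via Observation~\ref{obs:hall-check}, so I would check it directly. I would also confirm that the edges of $H$ between $F$ and $W$ cause no trouble. They do not, because Observation~\ref{obs:hall-partition} only requires Hall's condition on each induced part.
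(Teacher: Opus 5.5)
Your proposal is correct and follows the paper's proof essentially verbatim. You use the same graph $G$, which realizes the fatal lists of Lemma~\ref{lem:cycle-Hall-not-colorable} (or of Figure~\ref{fig:not-hall-L-sufficient} for the diamond) on $F$ and makes the leftover vertices a clique, together with the same appeal to Observation~\ref{obs:hall-partition}, and the condition $m\neq 4$ is used precisely to avoid $|W'|=1$. Splitting $V(H)\setminus V(F)$ into the two parts $\{a,b,c\}$ and $W'$, rather than coloring it injectively in one step as the paper does, is only a cosmetic difference.
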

\begin{proof}
It suffices to show that there exists some graph $G$ on $V(H)$ such that $(H,G)$ satisfies Hall's condition, but $H$ is not $G$-colorable.
 The proof is constructive and relies on Lemma~\ref{lem:cycle-Hall-not-colorable}. We show the construction via Figure~\ref{fig:not-Hall-universal}. 

Let $F$ be an induced subgraph of $H$ such that $F$ is either a cycle $C_k, \,k\ge 4$, or the diamond graph $K_4-e$. Since $|V(H)\setminus V(F)|\geq 3$, we may write $ V(H) = V(F) \sqcup \{a,b,c\} \sqcup R$, where $R$ contains the remaining vertices.
If $F = C_k$, where $C_k$ denotes the induced cycle $v_1–v_2–\cdots–v_{k}–v_1, k\geq 4$,  we construct a graph $G$ such that
\[
L(v_1)=\{c\},\qquad L(v_2)=\{a,c\},\qquad L(v_i)=\{a,b\}\,\,\text{ for all }3\le i\le k-1,
\]
and
\[
L(v_{k})=
\begin{cases}
\{b,c\}, & \text{if $k$ is even},\\[2mm]
\{a,c\}, & \text{if $k$ is odd}.
\end{cases}
\]
This is exactly the list assignment as described in Lemma~\ref{lem:cycle-Hall-not-colorable} and is implemented by the graph $G$ in Figure~\ref{fig:not-Hall-universal}, depending on whether the length of the induced cycle is odd or even. In Figure~\ref{fig:not-Hall-universal}, the remaining vertices in $R$ form a clique $K_R$.  If $F\cong K_4-e$, label it as in Figure~\ref{fig:not-hall-L-sufficient}, and the same list is supplied by graph $G$ in Figure~\ref{fig:not-Hall-universal} corresponding to even $k$. In both cases, $F$ satisfies Hall's condition with the given list assignment $L$, but $F$ is not $L$-colorable.
\begin{figure}[h]
\centering

\begin{minipage}{0.48\linewidth}
\centering
\begin{tikzpicture}[
  x=1.05cm, y=0.9cm,
  v/.style={circle,draw,inner sep=1.4pt},
  col/.style={circle,draw,inner sep=1.4pt},
  edge/.style={line width=0.35pt},
  cont/.style={line width=0.35pt,densely dotted},
  every node/.style={font=\small}
]

\node[v] (v1) at (0,2.8) {$v_1$};
\node[v] (v2) at (0,1.6) {$v_2$};
\node[v] (v3) at (0,0.4) {$v_3$};
\node[draw=none] (vd) at (0,-0.45) {$\vdots$};
\node[v] (vk1) at (0,-1.5) {$v_{k-1}$};

\node[col] (c) at (2.0,2.6) {$c$};
\node[col] (a) at (2.0,1.0) {$a$};
\node[col] (b) at (2.0,-0.6) {$b$};

\node[v] (vk) at (4.0,2.6) {$v_k$};
\node[v] (KR) at (4.0,0.4) {$K_R$};

\draw[edge] (v1) -- (c);
\draw[edge] (v2) -- (a);
\draw[edge] (v2) -- (c);

\draw[edge] (a) -- (v3);
\draw[cont] (a) -- (vd);
\draw[edge] (a) -- (vk1);

\draw[edge] (b) -- (v3);
\draw[cont] (b) -- (vd);
\draw[edge] (b) -- (vk1);


\draw[edge] (vk) -- (b);
\draw[edge] (vk) -- (c);

\node[draw=none] at (2.0,-2) {Even $k$};

\end{tikzpicture}
\end{minipage}
\hfill
\begin{minipage}{0.48\linewidth}
\centering
\begin{tikzpicture}[
  x=1.05cm, y=0.9cm,
  v/.style={circle,draw,inner sep=1.4pt},
  col/.style={circle,draw,inner sep=1.4pt},
  edge/.style={line width=0.35pt},
  cont/.style={line width=0.35pt,densely dotted},
  every node/.style={font=\small}
]

\node[v] (v1) at (0,2.8) {$v_1$};
\node[v] (v2) at (0,1.6) {$v_2$};
\node[v] (v3) at (0,0.4) {$v_3$};
\node[draw=none] (vd) at (0,-0.45) {$\vdots$};
\node[v] (vk1) at (0,-1.5) {$v_{k-1}$};

\node[col] (c) at (2.0,2.6) {$c$};
\node[col] (a) at (2.0,1.0) {$a$};
\node[col] (b) at (2.0,-0.6) {$b$};

\node[v] (vk) at (4.0,2.6) {$v_k$};
\node[v] (KR) at (4.0,0.4) {$K_R$};

\draw[edge] (v1) -- (c);
\draw[edge] (v2) -- (a);
\draw[edge] (v2) -- (c);

\draw[edge] (a) -- (v3);
\draw[cont] (a) -- (vd);
\draw[edge] (a) -- (vk1);

\draw[edge] (b) -- (v3);
\draw[cont] (b) -- (vd);
\draw[edge] (b) -- (vk1);


\draw[edge] (vk) -- (a);
\draw[edge] (vk) -- (c);

\node[draw=none] at (2.0,-2) {Odd $k$};

\end{tikzpicture}
\end{minipage}
\caption{A construction of graph $G$ which does not properly color the
cycle $v_1–v_2–\cdots–v_k–v_1$.}
\label{fig:not-Hall-universal}
\end{figure}

We now show $(H,G)$ satisfies Hall's condition. We utilize Observation~\ref{obs:hall-partition}.
It suffices to show that any induced subgraph on the vertex set $V(H)\setminus V(F)$ can be properly colored without using colors in $\{a,b,c\}$. In fact, we can color $V(H) \setminus V(F)$ injectively using $V(H) \setminus \{a,b,c\}$. The vertices $a,b,c$ can be properly colored by vertices in $V(F)$. For example, one can always define a proper coloring $\varphi$ such that $\varphi(c) = v_1, \varphi(a) = v_2$, and $\varphi(b) = v_3$. It remains to properly color $H[R]$, but this is $K_R$-colorable since $K_R$ has a $\{1,2\}$-factor and colors every graph $H[R]$. One exceptional case is when $R$ is a singleton; this is the case $|V(H) \setminus V(F)|=4$. This completes the proof.
\end{proof}

\subsection{Exceptional cases}\label{sec:exception}

One would hope to remove the small case $|V(H)\setminus V(F)| \neq 4$ from the statement of Theorem~\ref{thm:not-Hall}. However, when $|V(H)\setminus V(F)|=|V(F)|=4$, a surprising case emerges.

\begin{notation}
If $J$ is a subgraph of a graph $H$, the graph $H - J$ denotes the spanning subgraph of $H$ obtained by deleting the edges of $J$ from $H$.     For graphs $G_1$ and $G_2$, their join $G_1\vee G_2$ is obtained by adding all edges between vertices of $G_1$ and $G_2$.

\end{notation}

\begin{prop}\label{prop:K8-minus-Kst}
Let $s,t\ge1$ with $s+t\le6$. Then $K_8-K_{s,t}$
is $\mathcal G$-Hall-colorable. 
\end{prop}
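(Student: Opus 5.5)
The plan is to follow the proof of Theorem~\ref{thm:G-Hall-characterization} and handle by hand only the few configurations it leaves open. Write $H=K_8-K_{s,t}$, where the removed $K_{s,t}$ has parts $S$ and $T$, and put $Q=V\setminus(S\cup T)$, so $|Q|=8-s-t\ge 2$.

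\textbf{Structure of $H$.} $S\cup Q$ and $T\cup Q$ are cliques in $H$, and the non-edges are exactly the pairs $\{x,y\}$ with $x\in S$, $y\in T$. Hence every independent set of $H$ has size at most $2$, and it is such a pair if it has size $2$. The complement of $H$ is $K_{s,t}$ together with isolated vertices, so it contains no $2K_2$ and no odd hole. Consequently $H$ has no induced $C_k$ with $k\ge 4$. The only members of $\mathcal F$ occurring as induced subgraphs are diamonds $\{x,y,q,q'\}$ with $x\in S$, $y\in T$ and $q,q'\in Q$.

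\textbf{Reduction.} Let $(H,G)$ satisfy Hall's condition. As in Theorem~\ref{thm:G-Hall-characterization}, we may assume that $G$ has no isolated vertex and no spanning $\{1,2\}$-factor. Take the decomposition $V=A\sqcup B\sqcup R$ of Lemma~\ref{lem:no-12-factor-decomposition}, with $|A|>|B|\ge 1$. The argument of Theorem~\ref{thm:G-Hall-characterization} colors $B$ through the matching into $A$ and colors $R$ through Theorem~\ref{thm:hedet-main}, so it suffices to properly color $H[A]$ from the lists $N_G(v)\subseteq B$.

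Because these lists lie in $B$, Hall's condition for $(H,G)$ restricted to subgraphs of $H[A]$ is exactly Hall's condition for $H[A]$ with lists in $B$. This settles two cases at once:
\begin{itemize}
\item if $H[A]$ has no induced diamond, Theorem~\ref{thm:HJ-thm} applies;
\item if $|B|\le 2$, Lemma~\ref{lem:2-color-Hall} applies.
\end{itemize}
Otherwise $A$ contains a diamond $F$, and $B\subseteq V\setminus V(F)$, so $|B|\le 4$. Combined with $|A|\ge \max(4,|B|+1)$ and $|A|+|B|\le 8$, the remaining cases are $(|A|,|B|)\in\{(4,3),(5,3),(4,4)\}$.

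\textbf{Remaining cases.} Since $\alpha(H[A])\le 2$, a proper coloring of $H[A]$ is a partition of $A$ into singletons and $S$--$T$ non-edge pairs, each class receiving a common color from its lists. This is a matching problem: we must match "classes" to colors. My plan is to treat it as a bipartite matching between $A$ and $B$, in which a color may be used twice provided it is used on an $S$--$T$ pair lying in $A$.

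\begin{itemize}
\item \emph{Case $|A|=|B|=4$, $A=F$.} The only non-edge of $F$ is $xy$. Either the lists have an SDR, or a Hall-deficient set exists. I will show that Hall's condition $4\le\sum_\sigma\alpha(F,L,\sigma)$, applied also to the subgraphs $F-v$ and to the triangles, forces that deficiency to be repaired by giving $x$ and $y$ a common color $\sigma\in L(x)\cap L(y)$. Then $q,q'$ receive distinct colors different from $\sigma$, again by Hall applied to the relevant triangles or edges.
\item \emph{Cases $|B|=3$.} Here $2|B|\ge|A|$ means we need at least $|A|-3$ colors used twice. With $|A|\le 5$, at most two disjoint $S$--$T$ pairs are needed, and these are checked in the same way.
\end{itemize}

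\textbf{Expected obstacle.} I expect the main difficulty to be this last finite case analysis, namely turning the Hall inequalities on small subgraphs of $A$ into an actual choice of the repeated colors. I would first try a defect version of Hall's theorem: an SDR fails only on a set $X\subseteq A$ with $|N(X)|<|X|$, and the Hall inequality for $H[X]$ then forces $X$ to contain an $S$--$T$ pair with a common list color. If that does not close every case, the fallback is to enumerate the few possible list patterns with $|B|\le 4$ directly.
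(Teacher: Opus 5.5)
Your reduction (the decomposition $A\sqcup B\sqcup R$, then coloring $H[A]$ from lists in $B$, with Lemma~\ref{lem:2-color-Hall} for $|B|\le 2$) is correct and matches the paper. So is your description of the induced diamonds of $H$. The gap is in the remaining cases, which is where all the content of the proof lies.

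\textbf{Your case list is wrong, and one listed case cannot be done locally.} The case $(4,4)$ violates your own bound $|A|\ge|B|+1$. The case $(4,3)$ forces $|R|=1$. That is impossible, because $G[R]$ must have a spanning $\{1,2\}$-factor and a single vertex has none.

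This matters, because the local strategy you propose for $(4,3)$ cannot work. Take $A=F$ a diamond labelled as in Figure~\ref{fig:not-hall-L-sufficient}, with $B=\{a,b,c\}$ and the Hilton--Johnson lists $\{c\},\{a,c\},\{a,b\},\{b,c\}$.
\begin{itemize}
\item These lists are realizable as $N_G(v)$, $v\in A$, with $A$ independent, $B=N_G(A)$, and a matching saturating $B$ ($c\mapsto v_1$, $a\mapsto v_2$, $b\mapsto v_3$).
\item They satisfy Hall's condition on $H[A]$.
\item $H[A]$ is not colorable from them.
\end{itemize}
Your \emph{defect Hall} heuristic even finds the $S$--$T$ pair $v_2,v_4$ with common color $c$, but using $c$ there blocks $v_1$. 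So no analysis of Hall inequalities inside $A$, and no enumeration of list patterns with $|B|\le 4$, closes this case. It has to be excluded globally via $|R|\neq 1$.

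\textbf{The remaining case needs a clique bound, not a matching argument.} That leaves only $|A|=5$, $|B|=3$, $R=\varnothing$.
\begin{itemize}
\item For every clique $K$ of $H[A]$, Hall's condition gives $|K|\le\sum_{\sigma\in B}\alpha(K,L,\sigma)\le|B|=3$. So $H[A]$ is $K_4$-free.
\item Since $S\cup Q$ and $T\cup Q$ are cliques of $H$, any diamond $\{x,y,q,q'\}\subseteq A$ together with a fifth vertex of $A$ spans a $K_4$. This holds whether that vertex lies in $S$, $T$ or $Q$.
\item Hence $H[A]$ has no induced diamond, and Theorem~\ref{thm:HJ-thm} applies. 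You had already handled this case in your first bullet.
\end{itemize}
The paper reaches the same conclusion by noting $H[A]\cong K_3\sqcup K_2$ or $K_1\vee(K_2\sqcup K_2)$.

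As written, your final step is only a plan. For one of the cases it targets, the plan provably cannot succeed.
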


In particular, Proposition~\ref{prop:K8-minus-Kst} yields several graphs with an induced diamond and 4 additional vertices, which are $\mathcal{G}$-Hall-colorable, showing that the excluded case cannot simply be incorporated into Theorem~\ref{thm:not-Hall}. 

\begin{proof}[Proof of Proposition~\ref{prop:K8-minus-Kst}]
Let $H \cong K_8-K_{s,t}$. Consider a graph $G$ on $V=V(H)$, and suppose that $(H,G)$ satisfies Hall's condition. If $G$ has a spanning
$\{1,2\}$-factor, there is nothing to prove. Otherwise, choose the partition $V=A\sqcup B\sqcup R$
as in the proof of Lemma~\ref{lem:no-12-factor-decomposition}, where $A$ is independent in $G$, $B=N_G(A)$, $|B| < |A|$, there is a matching from
$B$ into $A$ which saturates $B$, and $G[R]$ has a spanning
$\{1,2\}$-factor.

Since $H \cong K_8-K_{s,t}$, we have $\alpha(H)\le2$. As every color
available to a vertex of $A$ belongs to $B$, satisfying Hall's condition implies
\[
|A|
\le
\sum_{\sigma\in B}\alpha(H[A],L,\sigma)
\le 2|B|.
\]
Together with $|B|<|A|$, $|A|+|B|+|R|=8$, $|R|\neq1$, we obtain either
\[
|B|\le2 \qquad \text{or} \qquad |A|=5,\quad |B|=3,\quad R=\varnothing.
\]
If $|B|\le2$, then Lemma~\ref{lem:2-color-Hall} gives a proper
coloring of $H[A]$ using colors in $B$. It remains to consider $|A|=5, |B|=3$. For every clique $Q$ of $H[A]$, satisfying Hall's condition means
\[
|Q|
\le
\sum_{\sigma\in B}\alpha(Q,L,\sigma)
\le |B|=3,
\]
i.e., the clique number of $H[A]$ is at most 3. Since $H[A]$ is an induced subgraph of
$K_8-K_{s,t}$ on five vertices, the only possibilities are
\[
H[A]\cong K_3\sqcup K_2
\qquad\text{or}\qquad
H[A]\cong K_1\vee(K_2\sqcup K_2).
\]
In either case, every block of $H[A]$ is a clique, and Theorem~\ref{thm:HJ-thm} gives a proper coloring of $H[A]$ using colors in $B$.

Finally, we use the matching from $B$ into $A$ to color the vertices of $B$
injectively using colors in $A$. And the spanning
$\{1,2\}$-factor of $G[R]$ gives a proper coloring of $H[R]$ using
colors in $R$.  As in the proof of Theorem~\ref{thm:G-Hall-characterization}, the three color sets $A$, $B$, and $R$ are pairwise disjoint, so they collectively yield a proper $G$-coloring of $H$.
\end{proof}

However, it turns out Proposition~\ref{prop:K8-minus-Kst} characterizes all exceptions.

\begin{prop}\label{prop:four-extra-characterization}
Suppose that there exists an induced subgraph $F \subseteq H$, where $F \in \mathcal{F}$, such that $|V(H)\setminus V(F)|= 4$. If $H\not\cong K_8-K_{s,t}$
for all $s,t\ge1$ with $s+t\le6$, then $H$ is not
$\mathcal G$-Hall-colorable.
\end{prop}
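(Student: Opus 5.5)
The plan is to repair the construction in the proof of Theorem~\ref{thm:not-Hall} for the case $|R|=1$. Write $V(H)=V(F)\sqcup\{x_1,x_2,x_3,x_4\}$. The construction there breaks down only because the leftover vertex $r$ of $R$ would be isolated in $G$, and then $(H,G)$ fails Hall's condition by Proposition~\ref{prop:basics-Hall}. So I will keep the fatal list on $F$ from Lemma~\ref{lem:cycle-Hall-not-colorable} (or from Figure~\ref{fig:not-hall-L-sufficient} when $F\cong K_4-e$). I will choose which three of the $x_i$ play the roles of $a,b,c$, and call the fourth one $r$. Then I will give $r$ a single $G$-neighbour $y$, chosen so that three things hold:
\begin{itemize}
\item the lists on $V(F)$ are unchanged, so $F$, and hence $H$, stays non-$G$-colorable;
\item Hall's condition can still be certified by Observation~\ref{obs:hall-partition} with a suitable two-part partition of vertices and colors;
\item $y$ is not isolated.
\end{itemize}

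The natural choice is $y\in\{a,b,c\}$, say $y=a$. Then $L(r)=\{a\}$, $L(a)$ gains $r$, and no list on $V(F)$ changes. I will use the partition with parts $V(F)\cup\{r\}$ and $\{a,b,c\}$, with color sets $\{a,b,c\}$ and $V(F)\cup\{r\}$ respectively.
\begin{itemize}
\item \emph{Second part.} The vertices $a,b,c$ are colored injectively from $V(F)$ as before, so this part is colorable and satisfies Hall by Lemma~\ref{lem:HJ-lem}.
\item \emph{First part.} We need $H[V(F)\cup\{r\}]$ with the enlarged list assignment to satisfy Hall. Using Observation~\ref{obs:hall-check}, I will check that every vertex-deleted subgraph is colorable. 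Deleting any $v\in V(F)$ lets us color $F-v$ and then give $r$ the color $a$ whenever $r$ is non-adjacent in $H$ to all vertices of $F-v$ colored $a$. Deleting $r$ is already handled by Lemma~\ref{lem:cycle-Hall-not-colorable}.
\item \emph{Global count.} Then $\alpha(\cdot,L,a)$ must increase by one when $r$ is added. This holds exactly when $r$ extends some maximum $a$-transversal of $F$, that is, when $r$ has an $H$-non-neighbour set among the $a$-eligible vertices that forms a maximum $a$-transversal.
\end{itemize}
The symmetric versions with $y=b$ or $y=c$, with the relabellings of the cycle (reversal, choice of $v_1$), and with the four choices of which $x_i$ is $r$, give many candidate constructions. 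If all of these fail, a second fallback is to take $y\in V(F)$ or $y\in\{x_i\}$ while simultaneously swapping roles, or to pick a different induced $F'\in\mathcal F$ inside $H$ (for $k\ge5$ the cycle supplies lots of freedom).

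The core of the proof is then a case analysis showing that every attempt fails only under strong density conditions on $H$. Each failure says that the relevant $x_i$ is $H$-adjacent to some vertex of every maximum $\sigma$-transversal of $F$. I expect to derive the following from these conditions:
\begin{itemize}
\item $F$ must be the diamond, since long cycles leave too much room;
\item $\alpha(H)\le2$;
\item the four extra vertices behave like the remaining vertices of $K_8$ minus a complete bipartite graph whose parts contain the two non-adjacent diamond vertices.
\end{itemize}
From these I aim to reconstruct $H\cong K_8-K_{s,t}$ with $s+t\le6$, which contradicts the hypothesis.

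The main obstacle is this last step. It requires organising the failure conditions cleanly enough to force the complement of $H$ to be exactly a complete bipartite graph, rather than running an unstructured enumeration over all graphs on $8$ vertices. I would first treat $F=C_k$ with $k\ge5$ separately, where one construction should always succeed. Then for $F=C_4$ and $F=K_4-e$ I would study the complement $\overline{H}$, showing that each non-edge forces further non-edges until $\overline{H}$ is a complete bipartite graph.
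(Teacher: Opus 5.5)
Your plan has a genuine gap. The constructions you allow are too narrow: keep the fatal list of Figure~\ref{fig:not-hall-L-sufficient} or Lemma~\ref{lem:cycle-Hall-not-colorable} on $F$, and give the leftover vertex $r$ a singleton list $\{\sigma\}$. These fail for non-exceptional graphs, so the conclusion you hope to reach, that every attempt fails only when $H\cong K_8-K_{s,t}$, is false.

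Take $H=K_8-K_3$, so $\overline H$ is a triangle $\{p,q,t\}$ plus five isolated vertices. Every induced member of $\mathcal F$ is a diamond in which two triangle vertices, say $p,q$, form the non-adjacent pair $v_2,v_4$ (lists $\{a,c\},\{b,c\}$). The vertices $v_1,v_3$ (lists $\{c\},\{a,b\}$) lie outside the triangle. Hall on $V(F)\cup\{r\}$ needs $r$ to miss a maximum $\sigma$-transversal of $F$. This forces $r=t$, which is adjacent to $v_1,v_3$ and non-adjacent to $v_2,v_4$. Each choice of $\sigma$ then fails:
\begin{itemize}
\item $\sigma=c$ fails on the edge $v_1r$, where both lists are $\{c\}$.
\item $\sigma=a$ fails on $H[\{v_1,v_3,v_4,r\}]$: every color has transversal number $1$, giving $3<4$.
\item $\sigma=b$ fails symmetrically on $\{v_1,v_2,v_3,r\}$.
\end{itemize}
All relabellings are equivalent, and switching to another induced $F'$ does not help. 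Yet $H$ is not an exception, and $\alpha(H)=3$, contrary to your expected $\alpha(H)\le 2$. The same failure occurs for $K_8$ minus a $2$-edge matching, which is the densest graph of the paper's Case II(a). Your fallback $y\in V(F)$ does not rescue this either. It adds a private color to a vertex of a vertex-critical fatal list, so $F$ becomes colorable, against your own first requirement, and you supply no replacement obstruction.

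The missing idea is that the fourth vertex needs a list of size at least two, and it may itself become part of the obstruction. For $K_8-\{v_2v_4,v_2d,v_4d\}$ the paper takes
\[
L(v_1)=\{b\},\quad L(v_2)=L(v_3)=\{a,c\},\quad L(v_4)=\{b,c\},\quad L(d)=\{a,b\}.
\]
The fatal diamond is now $\{v_1,v_3,v_4,d\}$. The leftover $v_2$ copies the list of $v_3$ while missing $v_4$ and $d$, so the $a$- and $c$-counts both rise; Case II(a) works the same way. When some extra vertex misses a degree-$3$ vertex of $F$ (Cases I(b), II(b)), the paper instead makes $\{a,b,c,d\}$ a clique of $G$, so that $L(d)=\{a,b,c\}$. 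For $C_k$ with $k\ge5$, it uses all four extra vertices as colors in a bipartite $G$. This replaces your unproved claim that ``one construction should always succeed.''

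The complement analysis you anticipate does appear in the paper, in Case I(a). There, excluding an induced $2K_2$, an induced $P_4$, and a triangle in $\overline H-\{v_1,v_3\}$ forces $H\cong K_8-K_{s,t}$. The triangle case that survives this analysis is exactly where singleton lists break down.
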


\begin{proof}

First, if $F\cong C_k$,  $k\geq 5$, we construct a bipartite
graph $G$ with independent sets $V(F)=\{v_1,\ldots,v_k\}$ and $\{a,b,c,d\}$,
such that
\[
L(v_1)=\{c\},\quad
L(v_i)=\{a,c\}\,\,\text{ for all }2\le i\le k-3, \quad
L(v_{k-1})=\{b,d\},\quad
L(v_k)=\{c,d\},
\]
and
\[
L(v_{k-2})=
\begin{cases}
\{b,c\}, & \text{if $k$ is even},\\[2mm]
\{a,b\}, & \text{if $k$ is odd}.
\end{cases}
\]
The said list assignment is such that $(H,L)$ satisfies Hall's condition, but the subgraph $F$ is not $L$-colorable, meaning $H$ is not $G$-colorable. We omit the details as the argument is entirely similar to that of Lemma $\ref{lem:cycle-Hall-not-colorable}$. 

We now resolve the remaining cases. Throughout, let $V(F)=\{v_1,v_2,v_3,v_4\}$ and $
V(H)\setminus V(F)=\{a,b,c,d\}$.
For each of the cases below, we construct a graph $G$ in Figure~\ref{fig:remaining-4plus4-cases} such that $(H,G)$ satisfies Hall's condition, but $H$ is not $G$-colorable. Note that, when verifying Hall's condition, it suffices in each of the following cases to consider the densest graph $H$ satisfying the hypothesis of the particular case. Then Hall's condition follows for all spanning subgraphs of $H$ by Proposition~\ref{prop:basics-Hall}. On the other hand, when showing non-colorability, it suffices to consider the sparsest graph $H$, due to Proposition~\ref{prop:basics}. We present these routine verifications in Appendix~\ref{app:prop-9}.

\noindent\textit{Case I(a).} $F\cong K_4-e$ and every vertex
in $\{a,b,c,d\}$ is adjacent to both degree-$3$ vertices of $F$.
Label $F$ so that $v_1,v_3$ are its degree-$3$ vertices and
$v_2v_4\notin E(H)$. If the graph $\overline H-\{v_1,v_3\}$ contains an induced $K_2 \sqcup K_2$, then the corresponding four
vertices induce a $C_4$ in $H$, and we are in Case II. If it contains
an induced path $x_1x_2x_3x_4$, then $H[\{v_1,x_1,x_2,x_4\}]\cong K_4-e,$
whose degree-$3$ vertices are $v_1$ and $x_4$, while
$x_3x_4\notin E(H)$, and we are in Case I(b).

We may therefore suppose that
$\overline H-\{v_1,v_3\}$ contains neither an induced $K_2 \sqcup K_2$ nor an 
induced path $P_4$. If it were also triangle-free, then its unique
nontrivial component would be complete bipartite, and consequently
$H\cong K_8-K_{s,t}$, contrary to our assumption. Hence
$\overline H-\{v_1,v_3\}$ contains a triangle. If the edge $v_2v_4$ lies in some triangle of $\overline H-\{v_1,v_3\}$, then, relabeling its third vertex as $d$, we may assume that $\{v_2,v_4,d\}$ is an independent set in $H$. In this case, we use the graph $G$ in the left column of Figure~\ref{fig:remaining-4plus4-cases}. Suppose now that $v_2v_4$ lies in no triangle of $\overline H-\{v_1,v_3\}$, and let $T$ be any triangle in $\overline H-\{v_1,v_3\}$.  Then at least one of $v_2,v_4$, say $v_4$, has at most one neighbor in $T$ (in $\overline{H}$). Choose two vertices, say $x,y$, in $T$ such that $v_4x, v_4y \in E(H)$. Then $H[\{v_1,v_4,x,y\}]\cong K_4-e,$
with degree-$3$ vertices $v_1$ and $v_4$. Since $v_2v_4\notin E(H)$, we are again in Case~I(b). 

\noindent\textit{Case I(b).} $F\cong K_4-e$ and some vertex
$d\in\{a,b,c,d\}$ is not adjacent to a degree-$3$ vertex of $F$.
Label $F$ so that $v_2v_4, dv_3\notin E(H)$,
where $v_1,v_3$ are the degree-$3$ vertices of $F$. In this case, we use the graph $G$ in the right column of
Figure~\ref{fig:remaining-4plus4-cases}.

\begin{figure}[h]
\centering

\begin{minipage}{0.32\linewidth}
\centering
\begin{tikzpicture}[
  x=0.9cm, y=0.78cm,
  v/.style={circle,draw,inner sep=1.2pt},
  col/.style={circle,draw,inner sep=1.2pt},
  edge/.style={line width=0.35pt},
  every node/.style={font=\small}
]

\node[v] (v1) at (0,2.8) {$v_1$};
\node[v] (v2) at (0,1.6) {$v_2$};
\node[v] (v3) at (0,0.4) {$v_3$};
\node[v] (v4) at (0,-0.8) {$v_4$};
\node[v] (d)  at (0,-2.0) {$d$};

\node[col] (a) at (2.0,2.2) {$a$};
\node[col] (b) at (2.0,0.8) {$b$};
\node[col] (c) at (2.0,-0.6) {$c$};

\draw[edge] (v1) -- (b);

\draw[edge] (v2) -- (a);
\draw[edge] (v2) -- (c);

\draw[edge] (v3) -- (a);
\draw[edge] (v3) -- (c);

\draw[edge] (v4) -- (b);
\draw[edge] (v4) -- (c);

\draw[edge] (d) -- (a);
\draw[edge] (d) -- (b);

\node[draw=none] at (1.0,-2.7) {Case I(a)};

\end{tikzpicture}
\end{minipage}
\hfill
\begin{minipage}{0.32\linewidth}
\centering
\begin{tikzpicture}[
  x=0.9cm, y=0.78cm,
  v/.style={circle,draw,inner sep=1.2pt},
  col/.style={circle,draw,inner sep=1.2pt},
  edge/.style={line width=0.35pt},
  every node/.style={font=\small}
]

\node[v] (v1) at (0,2.8) {$v_1$};
\node[v] (v2) at (0,1.6) {$v_2$};
\node[v] (v3) at (0,0.4) {$v_3$};
\node[v] (v4) at (0,-0.8) {$v_4$};
\node[v] (d)  at (0,-2.0) {$d$};

\node[col] (a) at (2.0,2.2) {$a$};
\node[col] (b) at (2.0,0.8) {$b$};
\node[col] (c) at (2.0,-0.6) {$c$};

\draw[edge] (v1) -- (a);
\draw[edge] (v1) -- (b);

\draw[edge] (v2) -- (a);
\draw[edge] (v2) -- (c);

\draw[edge] (v3) -- (a);
\draw[edge] (v3) -- (b);

\draw[edge] (v4) -- (b);
\draw[edge] (v4) -- (c);

\draw[edge] (d) -- (c);

\node[draw=none] at (1.0,-2.7) {Case II(a)};

\end{tikzpicture}
\end{minipage}
\hfill
\begin{minipage}{0.32\linewidth}
\centering
\begin{tikzpicture}[
  x=0.9cm, y=0.78cm,
  v/.style={circle,draw,inner sep=1.2pt},
  col/.style={circle,draw,inner sep=1.2pt},
  edge/.style={line width=0.35pt},
  every node/.style={font=\small}
]

\node[v] (v1) at (0,2.8) {$v_1$};
\node[v] (v2) at (0,1.6) {$v_2$};
\node[v] (v3) at (0,0.4) {$v_3$};
\node[v] (v4) at (0,-0.8) {$v_4$};

\node[col] (c) at (2.0,2.4) {$c$};
\node[col] (b) at (2.0,-0.4) {$b$};
\node[col] (d) at (3.8,1.0) {$d$};
\node[col] (a) at (2.9,1.0) {$a$};

\draw[edge] (v1) -- (c);

\draw[edge] (v2) -- (a);
\draw[edge] (v2) -- (c);

\draw[edge] (v3) -- (a);
\draw[edge] (v3) -- (b);

\draw[edge] (v4) -- (b);
\draw[edge] (v4) -- (c);

\draw[edge] (c) -- (b);
\draw[edge] (c) -- (d);
\draw[edge] (b) -- (d);
\draw[edge] (a) -- (c);
\draw[edge] (a) -- (b);
\draw[edge] (a) -- (d);

\node[draw=none,align=center] at (1.9,-1.7) {Cases I(b), II(b)};

\end{tikzpicture}
\end{minipage}

\caption{Constructions of $G$ for the remaining $|V(F)|= |V(H) \setminus V(F)|=4$ cases.}
\label{fig:remaining-4plus4-cases}
\end{figure}

\noindent\textit{Case II(a).} $F\cong C_4$ and every vertex in
$\{a,b,c,d\}$ is adjacent to every vertex of $F$.
We use the construction in the middle column of
Figure~\ref{fig:remaining-4plus4-cases}.

\noindent\textit{Case II(b).} $F\cong C_4$ and some vertex
$d\in\{a,b,c,d\}$ is not adjacent to some vertex of $F$.
Relabeling the cycle, we may assume that $dv_3\notin E(H).$
Since $v_2$ and $v_4$ are the two neighbors of $v_3$ on $C_4$, we also have $v_2v_4\notin E(H)$. Thus the same construction as in Case I(b), shown in the right column
of Figure~\ref{fig:remaining-4plus4-cases}, applies.

In each case, the construction of $G$ in
Figure~\ref{fig:remaining-4plus4-cases} is such that $(H,G)$ satisfies Hall's condition,
but $H$ is not $G$-colorable. As such, $H$ is not $\mathcal G$-Hall-colorable. This completes the proof.
\end{proof}

\subsection{Problem~\ref{prob:A} characterization}
Combining results of Theorem~\ref{thm:G-Hall-characterization} and Theorem~\ref{thm:not-Hall}, along with the exceptional cases in Proposition~\ref{prop:K8-minus-Kst} and Proposition~\ref{prop:four-extra-characterization}, we obtain the following complete answer to Problem~\ref{prob:A}.

\begin{cor}
A graph $H$ is $\mathcal G$-Hall-colorable if and only if either $|V(H)\setminus V(F)|\leq 2$
for every induced subgraph $F\subseteq H$ with $F\in\mathcal F$, or $H\cong K_8-K_{s,t}$
for some $s,t\geq1$ with $s+t\leq6$.
\end{cor}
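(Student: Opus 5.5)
The corollary follows by assembling the four preceding results; I would split on the maximum number of vertices of $H$ lying outside an induced member of $\mathcal F$.

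\emph{If direction.} Suppose first that $|V(H)\setminus V(F)|\le 2$ for every induced $F\subseteq H$ with $F\in\mathcal F$. This includes the case where $H$ has no induced member of $\mathcal F$ at all, where the condition holds vacuously. Theorem~\ref{thm:G-Hall-characterization} then applies directly and shows that $H$ is $\mathcal G$-Hall-colorable. Suppose instead that $H\cong K_8-K_{s,t}$ with $s,t\ge1$ and $s+t\le 6$. Then Proposition~\ref{prop:K8-minus-Kst} gives $\mathcal G$-Hall-colorability.

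\emph{Only if direction.} I would argue by contraposition. Assume the first alternative fails, so there is an induced $F\subseteq H$ with $F\in\mathcal F$ and $|V(H)\setminus V(F)|\ge 3$. Assume also that $H\not\cong K_8-K_{s,t}$ for every admissible pair $(s,t)$. There are two cases.
\begin{enumerate}
\item If $|V(H)\setminus V(F)|\neq 4$, then Theorem~\ref{thm:not-Hall} shows that $H$ is not $\mathcal G$-Hall-colorable.
\item If $|V(H)\setminus V(F)|=4$, then Proposition~\ref{prop:four-extra-characterization} applies, since its hypothesis that $H$ is not one of the exceptional graphs is exactly our assumption. It again shows that $H$ is not $\mathcal G$-Hall-colorable.
\end{enumerate}
These two cases are exhaustive, so the contrapositive holds.

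The only point needing care is that the first alternative is a universal statement over all induced copies of members of $\mathcal F$. Its negation is therefore the existence of a single bad copy, and that is precisely the hypothesis required by Theorem~\ref{thm:not-Hall} and Proposition~\ref{prop:four-extra-characterization}. It would be natural to ask whether the exceptional graphs $K_8-K_{s,t}$ already satisfy the first alternative. This does not matter, because both directions are disjunctive and no exclusivity between the two alternatives is needed. Once the quantifiers are lined up this way, I expect no real obstacle.
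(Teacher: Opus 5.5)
Your proposal is correct and follows the paper's own argument. The paper likewise obtains the corollary by combining Theorem~\ref{thm:G-Hall-characterization} and Proposition~\ref{prop:K8-minus-Kst} for the ``if'' direction with Theorem~\ref{thm:not-Hall} and Proposition~\ref{prop:four-extra-characterization} for the ``only if'' direction, split on whether $|V(H)\setminus V(F)|\neq 4$ or $=4$.
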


\section{Hall-universal Graphs}\label{sec:prob2}
The current section focuses on Problem~\ref{prob:B}. We refer to the graphs satisfying the property in Problem~\ref{prob:B} as {Hall-universal graphs}, as defined below.

\begin{defn}\label{def:Hall-universal}
A graph $G\in\mathcal G(V)$ is called \textit{Hall-universal} if, for every $H\in\mathcal G(V)$, whenever $(H,G)$ satisfies Hall's condition, $H$ is $G$-colorable.
\end{defn}

\begin{notation}\label{note:sets}
Let $G \in \mathcal{G}(V)$, and define
\[
\HC(G) = \{H\in\mathcal G(V):(H,G)\text{ satisfies Hall's condition}\},
\]
and
\[
\C(G) = \{H\in\mathcal G(V):H\text{ is }G\text{-colorable}\}.
\]
Let $\MHC(G)$ and $\MC(G)$ denote the sets of edge-maximal members of $\HC(G)$ and $\C(G)$, respectively. For instance, $H\in\MHC(G)$ if $H\in\HC(G)$ and $H+e\notin\HC(G)$ for every $e\in E(\overline H)$. The set $\MC(G)$ is defined analogously.
\end{notation}

With this notation, a graph $G$ is Hall-universal if and only if $\HC(G) = \C(G)$. Note that the inclusion $\C(G)\subseteq 
\HC(G)$ always holds by Lemma~\ref{lem:HJ-lem}. It turns out the notion of maximality characterizes Hall-universal graphs.

\begin{prop}\label{prop:maximal-minimal}
A graph $G$ is Hall-universal if and only if 
 $\MC(G)=\MHC(G)$.
\end{prop}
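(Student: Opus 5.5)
Both $\HC(G)$ and $\C(G)$ are down-closed families of spanning subgraphs on the fixed finite vertex set $V$. This follows from part~2 of Proposition~\ref{prop:basics} and part~2 of Proposition~\ref{prop:basics-Hall}. Since $V$ is finite, each member of a down-closed family lies below some edge-maximal member, obtained by adding non-edges one at a time while staying in the family. Hence each family is exactly the down-closure of its set of maximal elements:
\[
\HC(G)=\{H: H\subseteq M\text{ for some }M\in\MHC(G)\},\qquad \C(G)=\{H: H\subseteq M\text{ for some }M\in\MC(G)\}.
\]
The plan is therefore to reduce the statement to the general fact that two down-closed families of a finite poset coincide if and only if their sets of maximal elements coincide.

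For the forward direction, suppose $G$ is Hall-universal, so $\HC(G)=\C(G)$. The set of edge-maximal members is determined by the family alone, so $\MHC(G)=\MC(G)$ immediately.

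For the converse, suppose $\MC(G)=\MHC(G)$. The inclusion $\C(G)\subseteq\HC(G)$ always holds by Lemma~\ref{lem:HJ-lem}, so it remains to show $\HC(G)\subseteq\C(G)$. Take $H\in\HC(G)$ and extend it greedily to some $M\in\MHC(G)$ with $H\subseteq M$. Then $M\in\MC(G)\subseteq\C(G)$, and since $H$ is a spanning subgraph of $M$, part~2 of Proposition~\ref{prop:basics} gives $H\in\C(G)$.

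There is no real obstacle here. The only points needing care are that maximal extensions exist, which follows from finiteness of $V$, and the degenerate case where $G$ has an isolated vertex. In that case both families are empty by part~1 of Proposition~\ref{prop:basics} and part~1 of Proposition~\ref{prop:basics-Hall}, so both maximal sets are empty and the equivalence holds trivially.
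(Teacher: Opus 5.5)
Your proposal is correct and follows essentially the same route as the paper. The forward direction is immediate because the maximal members are determined by the families themselves, and for the converse you extend $H\in\HC(G)$ to some $M\in\MHC(G)=\MC(G)$ and pass back down to $H$ via part~2 of Proposition~\ref{prop:basics}; your extra remarks on finiteness and the isolated-vertex case are correct but unnecessary, since the general argument already covers them.
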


\begin{proof}
First, suppose that $G$ is Hall-universal. Then we have $\HC(G)=\C(G)$. Therefore, their edge-maximal  elements must be equal.

Next, suppose that
$\MC(G)=\MHC(G)$. We will show that
$\HC(G)\subseteq \C(G)$. Let $H\in \HC(G)$. By adding edges if needed, we may extend $H$ to a maximal graph $H'\in \MHC(G)$. By hypothesis, $H'\in \MHC(G)=\MC(G),$
so $H'$ is $G$-colorable. Since $H\subseteq H'$, $H$ is $G$-colorable by Proposition~\ref{prop:basics}. Therefore, $H\in \C(G)$. 
\end{proof}

\begin{rmk}
 Let $\mNHC(G)$ and $\mNC(G)$ denote the edge-minimal members of $\mathcal G(V)\setminus\HC(G)$ and $\mathcal G(V)\setminus\C(G)$, respectively. For example, $H\in\mNHC(G)$ if $H\notin\HC(G)$ and $H-e\in\HC(G)$ for every $e\in E(H)$. The set $\mNC(G)$ is defined analogously. Using similar reasoning as in Proposition~\ref{prop:maximal-minimal}, we can show that a graph $G$ is Hall-universal if and only if 
 $\mNC(G)=\mNHC(G)$.
\end{rmk}

\begin{prop}\label{prop:star-Hall}
Let $G \cong K_{1,n-1}$ be a star graph, where $n \geq 2$. Then $G$ is Hall-universal.
\end{prop}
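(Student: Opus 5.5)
Let $G\cong K_{1,n-1}$ with center $z$ and leaves $u_1,\dots,u_{n-1}$, and write $L(v)=N_G(v)$. Then every leaf has the singleton list $L(u_i)=\{z\}$, and the center has $L(z)=\{u_1,\dots,u_{n-1}\}$. The plan is to show that Hall's condition for $(H,G)$ forces exactly the structure that makes the obvious coloring proper. Assume $(H,G)$ satisfies Hall's condition. The case $n=2$ is trivial: $G=K_2$ colors every $H$ by swapping, since $G$ is a perfect matching.

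For $n\ge 3$, first apply \eqref{eq:Hall} to the subgraph $F=H[\{u_1,\dots,u_{n-1}\}]$. The only color available on $V(F)$ is $z$, so $\sum_\sigma \alpha(F,L,\sigma)=\alpha(F,L,z)=\alpha(F)$, and Hall's condition gives $n-1\le \alpha(F)$. Hence the leaves form an independent set in $H$.

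Next, apply \eqref{eq:Hall} to $H$ itself. The color $z$ can only be used on leaves, so $\alpha(H,L,z)\le n-1$. Each color $u_i$ appears only in the list of $z$, so $\alpha(H,L,u_i)\le 1$. This crude bound $n-1+(n-1)$ is not enough, so I refine it: $\alpha(H,L,z)=n-1$ exactly, since the leaves are independent. Hall's condition then reads
\[
n\le (n-1)+\sum_{i}\alpha(H,L,u_i),
\]
so some $\alpha(H,L,u_i)\ge 1$, which is automatic since $u_i\in L(z)$. Thus Hall's condition on all of $H$ gives nothing extra; the content lies in the leaf subgraph.

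Now color: set $\varphi(u_i)=z$ for all $i$, which is proper on the leaves because they are independent in $H$. Color $z$ with any leaf, say $\varphi(z)=u_1$. Since no vertex other than $z$ receives color $u_1$, and $z$ is the only vertex of $H$ with color $u_1$, no edge of $H$ is monochromatic: edges $zu_i$ get colors $u_1\neq z$, and there are no leaf–leaf edges. So $H$ is $G$-colorable.

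The only real step is recognizing that Hall's condition restricted to the leaves, where all lists equal the singleton $\{z\}$, forces independence. Everything else is immediate, so I expect no serious obstacle.
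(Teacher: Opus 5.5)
Your proof is correct, and its core observation is the same as the paper's: every leaf has the singleton list $\{z\}$, so Hall's condition on the leaves forces them to be independent in $H$. (The paper applies it to a single leaf--leaf edge.)

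The difference is in packaging. The paper goes through the maximality framework. It shows $\MC(G)=\{G\}=\MHC(G)$, using Theorem~\ref{thm:self-coloring} to get $G\in\C(G)$ and Corollary~\ref{cor:self-coloring-cor} to get $G\in\HC(G)$, and then invokes Proposition~\ref{prop:maximal-minimal}. You instead take an arbitrary $H\in\HC(G)$ and write down the coloring $\varphi(u_i)=z$, $\varphi(z)=u_1$ directly. This is more elementary and self-contained: it needs neither the cited self-coloring theorem nor the fact that $(G,G)$ satisfies Hall's condition.

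What the paper's route buys is the explicit identification of the unique edge-maximal graph, $H^*=G=K_V-K_{\{u_1,\dots,u_{n-1}\}}$. That is the template reused in Proposition~\ref{prop:internal-perfect-graphs} and in the grid-graph argument.

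Two small points. The separate treatment of $n=2$ is unnecessary, since a single leaf is trivially independent and your general argument covers it. The paragraph applying Hall's condition to all of $H$ can be deleted; as you note yourself, it adds nothing.
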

\begin{proof}
We will show $ \MC(G) = \{G\} =\MHC(G)$. It follows that stars are Hall-universal by Proposition~\ref{prop:maximal-minimal}.

Suppose a graph $H$ is properly colored by the star $K_{1,n-1}$ centered at vertex $v_n$ with leaves $v_1,\ldots,v_{n-1}$. The list of colors is such that $L(v_n) = \{v_1, \ldots, v_{n-1}\}$ and $L(v_1)=\cdots =L(v_{n-1})=\{v_n\}.$ Then the vertices $\{v_1, \ldots, v_{n-1}\}$ cannot be adjacent to each other. By Theorem~\ref{thm:self-coloring}, the graph $G$ is $G$-colorable. Hence, we obtain $\MC(G)=\{G\}$.

On the other hand, by Corollary~\ref{cor:self-coloring-cor}, the pair $(G,G)$ satisfies Hall's condition. If any two of $v_1,\ldots,v_{n-1}$ are adjacent, then the resulting edge does not satisfy Hall's condition, as the independence number of that edge is only 1. Hence, we obtain $\MHC(G)=\{G\}$.
\end{proof}

\begin{rmk}
In general, there is no known systematic procedure to describe members of $\MHC(G)$ or $\MC(G)$ for an arbitrary graph $G$. Ariel Cook's PhD dissertation \cite{Cook2025ExtremalProblemsGraphReferential} with Peter Johnson contains results on $\MC(G)$ and $\mNC(G)$ for special classes of graphs $G$.
\end{rmk} 

\subsection{Totally Hall-universal graphs} We say $G \in \mathcal{G}(V)$ is \textit{totally Hall-universal} if $\MHC(G) = \{K_V\}$, where $K_V$ is the complete graph on vertex set $V$. By Theorem \ref{thm:HJ-thm}, this immediately implies $\MC(G)=\{K_V\}$. This family of graphs is precisely the one characterized in Theorem~\ref{thm:hedet-main}, which we may restate as follows:

\begin{thm} [\cite{hedet2018, self-coloring}] \label{thm:hedet}
A graph $G$ is totally Hall-universal if and only if it contains a spanning $\{1,2\}$-factor.
\end{thm}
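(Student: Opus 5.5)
We prove the two directions separately, using Theorem~\ref{thm:hedet-main} as the main tool. Throughout, $K_V$ denotes the complete graph on $V=V(G)$ and $L(v)=N_G(v)$.

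For the ``if'' direction, suppose $G$ has a spanning $\{1,2\}$-factor. By Theorem~\ref{thm:hedet-main}, $K_V$ is $G$-colorable, so Lemma~\ref{lem:HJ-lem} gives $K_V\in\HC(G)$. Every graph on $V$ is a spanning subgraph of $K_V$, and $K_V$ has no proper supergraph on $V$. Hence $K_V$ is the unique edge-maximal member of $\HC(G)$, and $\MHC(G)=\{K_V\}$.

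For the ``only if'' direction, suppose $\MHC(G)=\{K_V\}$. We first check that $\HC(G)$ is nonempty. If $G$ had an isolated vertex, then by Proposition~\ref{prop:basics-Hall}(1) no pair $(H,G)$ would satisfy Hall's condition. In that case $\HC(G)=\varnothing$, so $\MHC(G)=\varnothing\neq\{K_V\}$, which is a contradiction. So $\HC(G)$ is nonempty, and since $K_V\in\MHC(G)\subseteq\HC(G)$, the pair $(K_V,G)$ satisfies Hall's condition.

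Next, recall that Hall's marriage theorem, in the form stated in the introduction, says the complete graph $K$ is $L$-colorable if and only if $(K,L)$ satisfies \eqref{eq:Hall}. The same conclusion follows from Theorem~\ref{thm:HJ-thm}, since $K_V$ contains no induced cycle of length at least $4$ and no induced diamond. Either way, $K_V$ is $G$-colorable. Theorem~\ref{thm:hedet-main} then yields a spanning $\{1,2\}$-factor of $G$.

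There is no real obstacle in this argument. The only point needing care is the degenerate case $\HC(G)=\varnothing$, which the isolated-vertex observation above disposes of. The substantive content is carried entirely by Theorem~\ref{thm:hedet-main} and by Hall's theorem for complete graphs.
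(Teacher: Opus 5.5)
Your argument is correct and is essentially the paper's. The paper notes that, since complete graphs are Hall-colorable (Theorem~\ref{thm:HJ-thm}), $\MHC(G)=\{K_V\}$ amounts to $K_V$ being $G$-colorable, and then reads the statement off Theorem~\ref{thm:hedet-main}. Two small points: the isolated-vertex detour in your ``only if'' direction is unnecessary, since $K_V\in\MHC(G)\subseteq\HC(G)$ is immediate from the hypothesis; and in the ``if'' direction, the uniqueness of $K_V$ as a maximal member tacitly uses that $\HC(G)$ is closed under taking spanning subgraphs (Proposition~\ref{prop:basics-Hall}(2)).
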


\begin{rmk}\label{rmk:tree-coloring}
If $G$ is bipartite, then by Theorem~\ref{thm:hedet}, $G$ is totally Hall-universal if and only if $G$ has a perfect matching. This implies, for instance, that hypercubes and grid graphs of even order are totally Hall-universal. Many graphs known to contain Hamiltonian cycles \cite{Gould2014RecentAdvancesHamiltonianIII} are also totally Hall-universal, again by Theorem~\ref{thm:hedet}. 
\end{rmk}

The following result establishes that if one is to randomly pick a graph from a pile of all graphs, it almost surely is a Hall-universal graph.

\begin{thm}\label{thm:almost-all}
Almost all graphs are totally Hall-universal.
\end{thm}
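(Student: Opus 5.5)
By Theorem~\ref{thm:hedet}, it suffices to show that almost all graphs have a spanning $\{1,2\}$-factor. We work in the uniform model $G(n,1/2)$ on the labeled vertex set $[n]$, where "almost all" means the probability tends to $1$ as $n\to\infty$. The plan is to show that asymptotically almost surely $G(n,1/2)$ contains a perfect matching (when $n$ is even) or a Hamiltonian cycle (for any $n$). Either structure is a spanning $\{1,2\}$-factor.

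The cleanest route is through Fact~\ref{fact:12-factor}. Since a spanning $\{1,2\}$-factor exists if and only if $|N_G(I)|\ge |I|$ for every independent set $I$, it suffices to prove two things about $G(n,1/2)$ with high probability. First, $\alpha(G)\le 2\log_2 n$. Second, every set $I$ of size at most $2\log_2 n$ has $|N_G(I)|\ge |I|$. The first is the classical bound: the expected number of independent sets of size $k=\lceil 2\log_2 n\rceil$ is $\binom{n}{k}2^{-\binom{k}{2}}\to 0$.

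For the second, it is enough that every vertex has degree at least, say, $n/4$, since $n/4 > 2\log_2 n$ for large $n$. Any nonempty $I$ then satisfies $|N_G(I)|\ge n/4\ge |I|$. The minimum degree bound follows from a Chernoff bound on each $\deg(v)\sim\mathrm{Bin}(n-1,1/2)$, together with a union bound over the $n$ vertices, giving failure probability $n e^{-cn}\to 0$.

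Combining the two, with probability tending to $1$ every independent set $I$ is nonempty only if $|I|\le 2\log_2 n$, and then $|N_G(I)|\ge n/4\ge |I|$. Fact~\ref{fact:12-factor} then yields a spanning $\{1,2\}$-factor, and Theorem~\ref{thm:hedet} finishes the proof. The only point needing care is to state precisely what "almost all graphs" means: the fraction of labeled graphs on $[n]$ with the property tends to $1$, which is exactly the $G(n,1/2)$ statement. No step is a real obstacle. The minimum-degree and independence-number estimates are routine, and one could even skip the independence-number step by noting that any independent set has size at most $n-\delta(G)$.
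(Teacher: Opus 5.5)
Your proof is correct, but it takes a different route from the paper's. The paper cites Pósa's theorem that $G(n,p)$ is a.a.s.\ Hamiltonian once $p\ge C\log n/n$. So $G(n,\tfrac12)$ a.a.s.\ has a Hamiltonian cycle, which is already a spanning $\{1,2\}$-factor, and Theorem~\ref{thm:hedet} finishes.

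You instead verify the criterion of Fact~\ref{fact:12-factor} directly:
\begin{enumerate}
\item a first-moment count gives $\alpha(G)\le 2\log_2 n$;
\item a Chernoff bound plus a union bound gives $\delta(G)\ge n/4$;
\item for large $n$, every nonempty independent $I$ then satisfies $|N_G(I)|\ge\delta(G)\ge n/4\ge 2\log_2 n\ge |I|$.
\end{enumerate}

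What each route buys:
\begin{enumerate}
\item Your argument is self-contained and elementary. It reuses a tool already stated in the paper instead of importing a deep random-graph theorem, and it extends verbatim to $G(n,p)$ for any fixed $p\in(0,1)$.
\item The paper's route gives a stronger structure, a Hamiltonian cycle. It also works all the way down to $p$ of order $\log n/n$. Your crude estimate $|N_G(I)|\ge\delta(G)$ only beats $\alpha(G)$ when $p$ is substantially denser, roughly of order $\sqrt{\log n/n}$.
\end{enumerate}

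Your opening sentence promises a perfect matching or a Hamiltonian cycle, which you never produce. That is harmless, since the argument you actually give does not need it.

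One caveat: your closing aside is false. Combining $|I|\le n-\delta(G)$ with $|N_G(I)|\ge\delta(G)$ yields $|N_G(I)|\ge|I|$ only when $\delta(G)\ge n/2$. In $G(n,\tfrac12)$ each degree is $\mathrm{Bin}(n-1,\tfrac12)$ with mean below $n/2$, so a.a.s.\ $\delta(G)<n/2$. Some upper bound on $\alpha(G)$ is genuinely needed, even one as weak as $\alpha(G)\le n/4$. Drop the aside and the proof stands.
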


\begin{proof}
A uniformly random labeled graph on $n$ vertices has distribution
$G(n,\tfrac12)$, where $G(n,p)$ denotes the well-known binomial random graph model (see, e.g., \cite{frieze2016}). Thus it suffices to show that
$G\sim G(n,\tfrac12)$ is asymptotically almost surely (a.a.s.) totally Hall-universal. It is a classical result of Pósa~\cite{posa1976} that there exists a constant $C>0$
such that if $p\geq C\frac{\log n}{n},
$ then $G(n,p)$ is a.a.s. Hamiltonian. In particular,
$G(n,\tfrac12)$ is a.a.s. Hamiltonian. By Theorem~\ref{thm:hedet}, we conclude $G$ is a.a.s. totally
Hall-universal.
\end{proof}

\subsection{Constructing Hall-universal graphs}
\begin{notation}
    For $k \in \mathbb{N},$ we define $[k]  = \{1,2,\ldots,k\}$.
\end{notation}

\begin{prop}\label{prop:internal-perfect-graphs}
Let $G_0$ be a totally Hall-universal graph, and let $S_1,\ldots,S_k$ be pairwise vertex-disjoint stars, each on at least two vertices and disjoint from $G_0$. Construct $G$ from the disjoint union $G_0\sqcup S_1\sqcup\cdots\sqcup S_k$
by adding arbitrary edges among the centers of the stars and between the centers and vertices of $G_0$. Then $G$ is Hall-universal.
\end{prop}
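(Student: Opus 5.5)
Let $H$ be a graph on $V(G)$ with $(H,G)$ satisfying Hall's condition. We will build a proper $G$-coloring of $H$ directly from the structure of $G$. Write $V_0=V(G_0)$. For each star $S_j$ let $c_j$ be its center and $\Lambda_j$ its set of leaves, with $|\Lambda_j|\ge 1$. (If $S_j\cong K_2$, designate either endpoint as the center.) The added edges never touch a leaf. Hence every leaf $\ell\in\Lambda_j$ has $N_G(\ell)=\{c_j\}$, a singleton list.

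First we extract the constraints that Hall's condition places on $H$. The leaves of $S_j$ all share the single color $c_j$. Applying \eqref{eq:Hall} to the subgraph $H[\Lambda_j]$, the only available color is $c_j$, so
\[
|\Lambda_j|\le \alpha(H[\Lambda_j],L,c_j)\le \alpha(H[\Lambda_j]).
\]
This forces $\Lambda_j$ to be independent in $H$; this is the same argument as in Proposition~\ref{prop:star-Hall}. This is the only consequence of Hall's condition we expect to need.

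Next we define the coloring $\varphi$ in three pieces with pairwise disjoint color sets.
\begin{enumerate}[(i)]
\item Each leaf $\ell\in\Lambda_j$ gets $\varphi(\ell)=c_j$. Leaves of the same star are nonadjacent in $H$. Leaves of different stars receive different colors. So this piece is proper, and it uses exactly the colors $\{c_1,\dots,c_k\}$.
\item Each center $c_j$ gets some leaf $\varphi(c_j)\in\Lambda_j\subseteq N_G(c_j)$. These colors lie in the disjoint sets $\Lambda_j$, so all centers receive distinct colors. This is the matching idea from Theorem~\ref{thm:G-Hall-characterization}, with the matching taken inside the stars.
\item Since $G_0$ is totally Hall-universal, Theorem~\ref{thm:hedet} shows that $G_0$ has a spanning $\{1,2\}$-factor. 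Theorem~\ref{thm:hedet-main} then gives that $G_0$ colors every graph on $V_0$, in particular $H[V_0]$. Let $\pi$ be such a coloring, with colors in $V_0$, and set $\varphi|_{V_0}=\pi$.
\end{enumerate}

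Finally we check that $\varphi$ is a proper $G$-coloring. Each vertex receives a color from its $G$-neighborhood, because the lists computed in $G$ contain the lists used above: added edges only enlarge the neighborhoods of centers and of $G_0$-vertices. The three color sets $\{c_j\}$, $\bigcup_j\Lambda_j$ and $V_0$ are pairwise disjoint, so an edge of $H$ joining different pieces is properly colored. Within each piece, properness was shown above.

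The only step that needs care is the leaf constraint, since it is the sole place Hall's condition is used. It works because leaf lists are singletons, so \eqref{eq:Hall} on $H[\Lambda_j]$ directly forces $\alpha(H[\Lambda_j])=|\Lambda_j|$.
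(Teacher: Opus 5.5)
Your proof is correct and is essentially the paper's argument. Hall's condition on the singleton leaf lists forces each leaf set to be independent in $H$. Your coloring then matches the paper's: leaves get their center, each center gets one of its own leaves, and $V(G_0)$ is colored via $G_0$, all with pairwise disjoint color sets. The paper merely packages this as $\MC(G)=\{H^*\}=\MHC(G)$ for $H^*=K_V-(K_{V_1}\cup\cdots\cup K_{V_k})$ and invokes Proposition~\ref{prop:maximal-minimal}, while you color $H$ directly.

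One small fix: when $S_j\cong K_2$, the center must be the endpoint designated in the construction (the one that may receive added edges), not an arbitrary endpoint. Otherwise the claim that every leaf has a singleton list can fail.
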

\begin{proof}
For each $i\in[k]$, let $V_i$ be the set of leaves of $S_i$, and let $V = V(G)$. Define
$H^* = K_V-(K_{V_1}\cup\cdots\cup K_{V_k})$.
We claim that
\[
\MC(G)=\{H^*\}=\MHC(G),
\]
and the result follows by Proposition~\ref{prop:maximal-minimal}.

Since $G_0$ is totally Hall-universal, $K_{V(G_0)}$ is $G_0$-colorable.
Fix such a coloring. For each star $S_i$ with center $u_i$, choose a leaf
$v_i\in V_i$ and assign
\[
\varphi(u_i)=v_i
\qquad\text{and}\qquad
\varphi(w)=u_i \quad\text{for all }w\in V_i.
\]
This gives a proper
$G$-coloring of $H^*$. 

Now suppose that $(H,G)$ satisfies Hall's condition. If $H$ contains an
edge $xy$ with $x,y\in V_i$ for some $i$, then $N_G(x)=N_G(y)=\{u_i\},$ and the edge $xy$ does not satisfy Hall's condition. Hence no such edge exists,
and we have $H\subseteq H^*$. It follows that $\MC(G)=\{H^*\}=\MHC(G).$
\end{proof}

\begin{rmk} \label{rmk:caterpillars}
Proposition~\ref{prop:internal-perfect-graphs} applies to
trees such that a removal of the pendant stars along with their parent leaves a forest with a perfect matching.
In particular, Proposition~\ref{prop:internal-perfect-graphs} applies to
caterpillars in which every non-leaf vertex has at least one leaf neighbor.
Indeed, every spine vertex is then the center of a nontrivial star, so removing
stars removes the entire spine and leaves the empty forest, which trivially has a
perfect matching.
\end{rmk}

The consequence noted in Remark~\ref{rmk:caterpillars} 
asks for a characterization of trees that are Hall-universal. We show next that all forests are Hall-universal.

\subsection{Forests} 
\begin{thm}\label{thm:trees-Hall-universal}
Every forest is Hall-universal.
\end{thm}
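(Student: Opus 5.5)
We argue by induction on $|V|$, and within a fixed $|V|$ on the number of leaves, using the decomposition idea of Proposition~\ref{prop:internal-perfect-graphs}. Let $G$ be a forest and let $H$ be a graph with $(H,G)$ satisfying Hall's condition.

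\textbf{Base cases.} By Proposition~\ref{prop:basics-Hall}, $G$ has no isolated vertices. If $G$ has a perfect matching, then by Remark~\ref{rmk:tree-coloring} it is totally Hall-universal, and every graph is $G$-colorable. If $G$ is disconnected, Hall's condition for $(H,G)$ restricts to each component $T$ together with $H[V(T)]$, since the colors available to $V(T)$ lie in $V(T)$. The colorings obtained on different components then use disjoint color sets, so they combine. We may therefore assume that $G$ is a tree without a perfect matching.

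\textbf{Reduction step.} Take a longest path in $G$ and let $u$ be the second vertex on it. Every child of $u$ is then a leaf, so $u$ together with its leaf children $\ell_1,\dots,\ell_m$ forms a pendant star $S$. Any two of these leaves have list $\{u\}$, so Hall's condition forces them to be pairwise nonadjacent in $H$. There are two cases.

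\emph{Case $m\ge 2$.} Let $G'=G-\{\ell_2,\dots,\ell_m\}$. We color $H-\{\ell_2,\dots,\ell_m\}$ from $G'$, having checked that Hall's condition is inherited, which holds because we are passing to an induced subgraph with the same lists. By induction this coloring exists. Each $\ell_j$ with $j\ge2$ is then colored $u$, which is legal because the only other vertex that can receive color $u$ is $\ell_1$, and $\ell_1$ is nonadjacent to $\ell_j$.

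\emph{Case $m=1$.} The star $S$ is just the edge $u\ell_1$. If $u$ has degree at most $2$ in $G$, let $p$ be its parent. Color $\ell_1\mapsto u$ and $u\mapsto \ell_1$. This pairing uses only the colors $u,\ell_1$, and these appear only in the lists of $\ell_1$, $u$ and $p$. Put $G'=G-\{u,\ell_1\}$ on $V\setminus\{u,\ell_1\}$ and $H'=H-\{u,\ell_1\}$. The list of $p$ shrinks only by $u$, and restricting Hall's condition from $H$ to $H'$ with these shrunken lists is exactly what Observation~\ref{obs:hall-partition} needs in reverse. If $p$ becomes isolated in $G'$, this pairing fails, and instead $G$ is locally a path $\ell_1$--$u$--$p$ with $p$ a leaf.

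\textbf{Main obstacle.} The hardest step is showing that Hall's condition for $(H,G)$ implies Hall's condition for $(H',G')$ after deleting a matched pair. Deleting color $u$ from the list of $p$ can decrease $\alpha(F,L,u)$ for subgraphs $F$ containing $p$. My plan is to compare with $F+\ell_1$ and $F+\{\ell_1,u\}$ in $H$: since $\ell_1$ has list $\{u\}$, adding it to $F$ should compensate for the lost $u$-transversal. This must be checked when $\ell_1$ is adjacent in $H$ to vertices of a maximum $u$-transversal, and that is where the argument may need a more careful choice. If the direct argument fails, I would fall back on Proposition~\ref{prop:maximal-minimal}. Concretely, I would describe $\MHC(G)$ for a tree explicitly, as $K_V$ minus cliques on the sets of leaves sharing a parent together with forced independent sets coming from the structure of Lemma~\ref{lem:no-12-factor-decomposition} applied to $G$, and then exhibit a coloring for each maximal member directly. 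This would generalize the construction of $H^*$ in Proposition~\ref{prop:internal-perfect-graphs}.
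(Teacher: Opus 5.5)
There is a genuine gap, and it is in the step you flagged as the main obstacle. In Case $m=1$, the transfer of Hall's condition from $(H,G)$ to $(H',G')$ is false, not merely hard to verify. If $p\ell_1\notin E(H)$, then $p$ may legitimately take color $u$ together with $\ell_1$, and deleting $u,\ell_1$ destroys that option.

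Here is a counterexample. Let $G$ be the tree with edges $\ell_1u_1$, $u_1p$, $px$, $xy$, $xq$, $qu_2$, $u_2\ell_2$.
\begin{itemize}
\item $G$ has no perfect matching, and its only longest path is $\ell_1u_1pxqu_2\ell_2$.
\item Both ends fall under your Case $m=1$, and neither $p$ nor $q$ becomes isolated after the deletion.
\item Let $H=K_V-\{p\ell_1,q\ell_2\}$. The assignment $\ell_1,p\mapsto u_1$, $\ell_2,q\mapsto u_2$, $u_1\mapsto\ell_1$, $u_2\mapsto\ell_2$, $y\mapsto x$, $x\mapsto p$ is a $G$-coloring of $H$. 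Hence $(H,G)$ satisfies Hall's condition.
\item Delete $\{u_1,\ell_1\}$ (the other end is symmetric). Then $p$ and $y$ both have list $\{x\}$ in $G'$ and are adjacent in $H'$, so $(H',G')$ fails Hall's condition on that edge.
\end{itemize}
Your comparison with $U\cup\{\ell_1\}$ does work when $p\ell_1\in E(H)$, because then every $u$-transversal of $U\cup\{\ell_1\}$ has size $1$. That is exactly the situation you worried about (``$\ell_1$ adjacent to the transversal''), but it is the harmless case. The failing case is $p\ell_1\notin E(H)$.

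The missing idea, which is how the paper proceeds, is to let $p$ keep $u$ as an extra color whenever $p$ has no $H$-neighbor among the pendant leaves of $u$. Since $u$ is no longer a vertex, the new list is not graphic. The induction must therefore be run on a stronger statement, with lists $N_F(v)\cup P_v$ where the $P_v$ are pairwise disjoint private color sets disjoint from $V$. This in turn requires separate steps for isolated vertices of $F$ and for leaves with $P_x\neq\varnothing$. When $p$ does have an $H$-neighbor $z$ among the leaves, the paper deletes $u$ from $p$'s list and verifies Hall's condition by adjoining $z$, which is your argument. It then colors all sibling leaves with $u$ and colors $u$ with $z$.

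Your Case $m\ge 2$ also has two errors, both fixable:
\begin{itemize}
\item The list of $u$ shrinks from $\{\ell_1,\dots,\ell_m,p\}$ to $\{\ell_1,p\}$, so the lists are not the same. Hall's condition still transfers, because $\ell_1$ is a color seen only by $u$, so subsets containing $u$ reduce to subsets without it.
\item Not only $\ell_1$ but also $p$ can receive color $u$ in the coloring of $H'$. You must keep, as $\ell_1$, a leaf adjacent to $p$ in $H$ whenever one exists.
\end{itemize}

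Finally, the fallback of describing $\MHC(G)$ explicitly for every tree is essentially a restatement of the theorem. It gives no mechanism for producing the required colorings.
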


\begin{proof}
We prove the following stronger statement. Let $F$ be a forest on vertex set
$V$. For each $v\in V$, we say the set of colors $P_v$ is \textit{private} if
\begin{equation} \label{eqn:private}
    P_v\cap V=\varnothing \quad \text{for all } v\in V,
\qquad
P_v\cap P_u=\varnothing \quad \text{for all } u\neq v.
\end{equation}
Define
\[
L(v)=N_F(v)\cup P_v \quad \text{ for all } v\in V.
\]
For a graph $H \in \mathcal{G}(V)$, we show that if $(H,L)$ satisfies Hall's condition, then $H$ is $L$-colorable. If $F$ is a forest on vertex set $V$, we take $P_v=\varnothing$ for all $v\in V$. Then $L(v)=N_F(v)$, and the
result follows.

We argue by induction on $|V|$. Assume $|V|\geq 1$ and that the claim holds for all forests on smaller vertex
sets. Consider a graph $H$ on $V$ such that $(H,L)$ satisfies Hall's condition.

First, suppose $F$ has an isolated vertex $x$. Then $L(x)=P_x$. We must have $P_x\neq\varnothing$, since $(H,L)$ satisfies Hall's condition. Let $V'=V\setminus\{x\}$ and let $L'(v)=L(v)$ for all $v\in V'$. 
Since $x$ is isolated in $F$, we have
\[
L'(v)=N_{F-\{x\}}(v)\cup P_v
\quad\text{for all }v\in V'.
\]
Hence $(H[V'],L')$ satisfies Hall's condition. By the induction hypothesis $H[V']$ is $L'$-colorable.  Such a coloring extends to a proper coloring of $H$ by assigning $x$ any color from $P_x$.

Next, suppose $F$ has a leaf $x$ with $P_x\neq\varnothing$. Let $u$ be the
unique neighbor of $x$ in $F$. Define
\[
V'=V\setminus\{x\}, \qquad F'=F[V'],
\]
and define private sets on $V'$ by
\[
P'_v=
\begin{cases}
P_v\cup\{x\}, & \text{if } v=u,\\
P_v, & \text{otherwise},
\end{cases}
\qquad\text{for all } v\in V'.
\]
Note that these sets satisfy the assumptions in \eqref{eqn:private}. Moreover,
\[
L'(v)=N_{F'}(v)\cup P'_v=N_F(v)\cup P_v = L(v)
\quad\text{for all } v\in V'.
\]
Hence the pair $(H[V'],L')$ satisfies Hall's condition. By the induction hypothesis, $H[V']$ is $L'$-colorable, and this
coloring extends to $H$ by assigning $x$ any color in $P_x$.

We may therefore assume that $F$ has no isolated vertices and that every leaf
$y$ of $F$ satisfies $P_y=\varnothing$. Choose a longest path in $F$. Let $x$
be an endpoint of this path, and let $u$ be the neighbor of $x$ on the path.
Set
\[
A=\{\,y\in V:y\text{ is a leaf of }F\text{ and }yu\in E(F)\,\}.
\]
In other words, $A$ is the set of sibling leaves containing vertex $x$. Then $A\neq\varnothing$, and $u$ has at most one neighbor in $F$ outside $A$.
If such a neighbor exists, denote it by $w$. Note that we have $L(y)=\{u\}$ for all $y\in A$ and
the set $A$ is independent in $H$. Indeed, if two vertices of $A$ were
adjacent in $H$, then Hall's condition would fail on the induced edge between
them. Now consider
\[
V'=V\setminus(A\cup\{u\}),
\qquad
F'=F[V'].
\]
Define private sets on $V'$
by
\[
P'_v=
\begin{cases}
P_v\cup\{u\}, & \text{if $w$ exists, $v=w$, and $N_H(w)\cap A=\varnothing$,}\\
P_v, & \text{otherwise}.
\end{cases}
\qquad\text{for all } v\in V'.
\]
Again these sets satisfy the assumptions in \eqref{eqn:private} and
\[
L'(v)=N_{F'}(v)\cup P'_v=N_F(v)\cup P_v = L(v)
\quad\text{for all } v\in V'\setminus \{w\}.
\]
If $w$ does not exist or $N_H(w)\cap A=\varnothing$, then
$L'(v)=L(v)$ for all $v\in V'$, so $(H[V'],L')$ satisfies Hall's
condition. In this case, choose any $z\in A$. Suppose now that $w$ exists and $N_H(w)\cap A\neq\varnothing$. 
Choose a vertex $z\in N_H(w)\cap A$. As $(H,L)$ satisfies Hall's
condition, for every $U\subseteq V'$ we have
\[
|U|+1
\leq
\sum_{\sigma}\alpha\big(H[U\cup\{z\}],L,\sigma\big).
\]
Among the vertices of $U\cup\{z\}$, the only vertices whose lists contain
$u$ are $z$ and $w$. As these two vertices are adjacent in $H$, we have $\alpha(H[U\cup\{z\}],L,u)=1$.
For every color $\sigma\neq u$, the vertex $z$ does not contribute to
$\alpha(H[U\cup\{z\}],L,\sigma)$, and the change from $L$ to $L'$ only affects
the color $u$ at vertex $w$. This means
\[
\alpha(H[U\cup\{z\}],L,\sigma)
=
\alpha(H[U],L',\sigma)
\quad\text{for all }\sigma\neq u.
\]
Therefore,
\[
|U|+1
\leq
1+\sum_{\sigma}\alpha(H[U],L',\sigma),
\]
and hence $(H[V'],L')$ satisfies Hall's condition in both cases.

By the induction hypothesis, $H[V']$ has an $L'$-coloring. Extend it by assigning every
vertex of $A$ the color $u$, and by assigning $u$ the color $z$. This is a
valid extension, since $L(y)=\{u\}$ for every $y\in A$, and
$z\in A\subseteq N_F(u)\subseteq L(u)$. The vertices in $A$ may all receive
the color $u$ because $A$ is independent in $H$. No vertex of $V'$ can receive
the color $z$, since the only vertex whose list contains $z$ is $u$. Finally, if $w$ does not exist or $N_H(w)\cap A\neq\varnothing$,
then no vertex of $V'$ can receive the color $u$. If $w$ exists and
$N_H(w)\cap A=\varnothing$, then $w$ may receive the color $u$, but
this creates no conflict because $w$ has no neighbor in $A$ (in $H$). Thus the coloring extends to an $L$-coloring of $H$.

This completes the induction. We conclude every forest is Hall-universal.
\end{proof}

\begin{rmk}
Theorem~\ref{thm:trees-Hall-universal} could equivalently be stated in terms of trees. Indeed, a disjoint union of Hall-universal graphs is
Hall-universal, as Hall's condition restricts to each component of $G$.
\end{rmk}

\subsection{Complete multipartite graphs}
In this subsection, we completely characterize the complete multipartite graphs that are Hall-universal.
\begin{defn} [\cite{CROPPER20061988}]
Given a graph $G$ on vertex set $V$, its \textit{Hall ratio} is defined as
\[
\rho(G)
=
\max_{\varnothing\neq U\subseteq V}
\frac{|U|}{\alpha(G[U])}.
\]
\end{defn}
\begin{prop}\label{prop:multipartite-Hall-ratio}
Let $G\cong K_{A_1,\ldots,A_r}$ be a complete multipartite graph on $V=A_1\sqcup\cdots\sqcup A_r,$
where $r\geq 2$. For all $H \in \mathcal{G}(V)$, the pair $(H,G)$ satisfies Hall's condition if and only if
\[
\rho(H[A_i])\leq |V|-|A_i|
\qquad\text{for all } i\in[r].
\]
\end{prop}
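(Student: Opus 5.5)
The plan is to first understand the lists. For $v\in A_i$ we have $L(v)=N_G(v)=V\setminus A_i$. So a color $\sigma\in A_j$ is available exactly to the vertices outside $A_j$. Hence for any subgraph $F$ of $H$,
\[
\alpha(F,L,\sigma)=\alpha\bigl(F[V(F)\setminus A_j]\bigr)\quad\text{for }\sigma\in A_j,
\]
where $\alpha$ of the empty graph is $0$. Hall's condition for $F$ with $U=V(F)$ then reads
\[
|U|\le \sum_{j}|A_j|\,\alpha\bigl(H[U\setminus A_j]\bigr)\quad\text{for all } U\subseteq V.
\]
It suffices to check induced subgraphs, since deleting edges only increases the right-hand side.

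\emph{Necessity.} Take $U\subseteq A_i$ nonempty. Then $U\setminus A_j=U$ for $j\neq i$, and $U\setminus A_i=\varnothing$. The condition becomes
\[
|U|\le \alpha(H[U])\sum_{j\ne i}|A_j|=\alpha(H[U])\,(|V|-|A_i|).
\]
Taking the maximum over $U$ gives exactly $\rho(H[A_i])\le |V|-|A_i|$.

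\emph{Sufficiency.} Fix an arbitrary nonempty $U\subseteq V$ and write $U_j=U\cap A_j$. I split into two cases according to how many parts $U$ meets.

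\begin{enumerate}
\item \textbf{$U$ meets only one part}, say $U=U_i$. This is exactly the hypothesis, as in the necessity computation.
\item \textbf{$U$ meets at least two parts.} For each $j$, the set $U\setminus A_j$ is nonempty, since it contains $U_k$ for some $k\ne j$.
\begin{itemize}
\item For $j$ with $U_j=\varnothing$, use $\alpha(H[U\setminus A_j])=\alpha(H[U])\ge \max_k\alpha(H[U_k])$.
\item For $j$ with $U_j\ne\varnothing$, use $\alpha(H[U\setminus A_j])\ge \max_{k\ne j}\alpha(H[U_k])$.
\end{itemize}
\end{enumerate}

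The main obstacle is turning these bounds into the inequality $|U|\le\sum_j |A_j|\,\alpha(H[U\setminus A_j])$. By hypothesis, $|U_k|\le \alpha(H[U_k])\,(|V|-|A_k|)$, so the aim is to distribute the budget $\sum_j|A_j|\alpha(\cdot)$ so that each $U_k$ receives its share.

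I would first try a direct averaging bound. Set $a_k=\alpha(H[U_k])$. Then
\[
\sum_j |A_j|\,\alpha\bigl(H[U\setminus A_j]\bigr)\ \ge\ \sum_j |A_j|\,\max_{k\ne j,\,U_k\neq\varnothing} a_k .
\]
Since the maximum over $k\neq j$ dominates any weighted average over $k\neq j$, it should be enough to show
\[
\sum_j |A_j|\max_{k\ne j}a_k\ \ge\ \sum_k a_k\,(|V|-|A_k|)=\sum_k\sum_{j\ne k}|A_j|\,a_k .
\]
That direct comparison is false in general, since a sum of terms exceeds a single maximum. So I would fall back on a cruder but valid route. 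Note that $\alpha(H[U\setminus A_j])\ge \sum$ is not available either, because the parts are not independent of each other in $H$.

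The fallback is to pick the index $k^*$ maximizing $|U_k|/a_k$ (or simply the largest $|U_k|$) and lower-bound everything by $a_{k^*}$. This gives
\[
\text{RHS}\ \ge\ |V|-|A_{k^*}|+(\text{extra terms from }j=k^*)\ \ge\ |U\setminus A_{k^*}|+|U_{k^*}| .
\]
Here the first term $|V|-|A_{k^*}|$ dominates $|U\setminus A_{k^*}|$ trivially. For the $j=k^*$ term, use $|A_{k^*}|\cdot\alpha(H[U\setminus A_{k^*}])\ge|A_{k^*}|\ge|U_{k^*}|$, which holds because $U\setminus A_{k^*}$ is nonempty. Carefully, the left side of this chain is
\[
\sum_{j\ne k^*}|A_j|\cdot 1+|A_{k^*}|\cdot 1=|V|\ge |U|,
\]
since every $\alpha$ is at least $1$ when the set is nonempty. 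So the multi-part case should in fact be trivial: each $U\setminus A_j$ is nonempty, so each term is at least $|A_j|$, and the total is at least $|V|\ge|U|$. I expect this observation to carry the proof, with the single-part case being the only real constraint.
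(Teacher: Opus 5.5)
Your proof is correct and follows essentially the paper's argument. You compute $\alpha(F,L,\sigma)=\alpha(H[U\setminus A_j])$ for $\sigma\in A_j$, observe that sets $U\subseteq A_i$ give exactly the Hall-ratio condition, and handle sets meeting two or more parts by noting that each $U\setminus A_j$ is nonempty, so the sum is at least $\sum_j|A_j|=|V|\ge|U|$. In a write-up, drop the abandoned averaging and $k^*$ detours, since that final one-line observation is all the multi-part case needs.
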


\begin{proof}
Let $n_i=|A_i|$ and $n=|V|$. For every vertex $v\in A_i$, we have $N_G(v)=V\setminus A_i$.
Thus, for every $U\subseteq V$,
\[
\sum_{v\in V}\alpha\bigl(H[U\cap N_G(v)]\bigr)
=
\sum_{i=1}^r n_i \,\alpha\bigl(H[U\setminus A_i]\bigr).
\]
We first show that it is enough to verify Hall's condition for subsets $U \subseteq A_i$, $i\in [r]$, i.e., subsets contained within a single partition. If $U$ intersects at least two parts of the multipartition, then
$U\setminus A_i$ is nonempty for all $i\in [r]$, and  $\alpha\bigl(H[U\setminus A_i]\bigr)\geq 1$ for all $i \in [r]$. Therefore,
\[
\sum_{i=1}^r n_i \,\alpha\bigl(H[U\setminus A_i]\bigr)
\geq
\sum_{i=1}^r n_i
=
n
\geq |U|,
\]
meaning Hall's condition holds for such $U$.

Now suppose $U\subseteq A_j$ for some $j\in[r]$. In this case, $U\setminus A_j=\varnothing$, and $U\setminus A_i=U$ for $i\neq j$. Thus checking Hall's condition is equivalent to checking
\[
|U|
\leq
\sum_{i\neq j} n_i \,\alpha(H[U])
=
(n-n_j) \,\alpha(H[U]).
\]
Said otherwise, $(H,G)$ satisfies Hall's condition if and only if $|U|\leq (n-|A_j|) \; \alpha(H[U])
$
for every $j$ and every $U\subseteq A_j$. This is precisely the condition $\rho(H[A_j])\leq n-|A_j|$ for all $j\in [r]$,
as claimed.
\end{proof}

\begin{lem}\label{lem:k-colorable}
 Let $k\geq 2$. If $G$ is a graph on at most $k+2$ vertices such that $\rho(G) \leq k$, then $G$ is $k$-colorable.
\end{lem}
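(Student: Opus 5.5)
The plan is a direct case analysis on the number of vertices $n=|V(G)|\le k+2$. A proper $k$-coloring is the same as a partition of $V(G)$ into at most $k$ independent sets. So the task is to find enough non-edges, or a large enough independent set, to merge vertices into $k$ classes. The hypothesis $\rho(G)\le k$ will be used only on a few well-chosen subsets $U$, via $|U|\le k\,\alpha(G[U])$.

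\emph{Case $n\le k$.} This is trivial: give every vertex its own color.

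\emph{Case $n=k+1$.} Taking $U=V(G)$ gives $k+1\le k\,\alpha(G)$, so $\alpha(G)\ge 2$. Hence some pair $x,y$ is non-adjacent. Give $x$ and $y$ the same color and every other vertex its own color. This uses $1+(k-1)=k$ colors.

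\emph{Case $n=k+2$.} Again $U=V(G)$ gives $k+2\le k\,\alpha(G)$, and since $k\ge2$ this forces $\alpha(G)\ge 2$.
\begin{enumerate}
\item If $\alpha(G)\ge 3$, take an independent triple as one color class and color the remaining $k-1$ vertices singly. This gives $k$ classes.
\item If $\alpha(G)=2$, the complement $\overline G$ is triangle-free and has at least one edge. Suppose $\overline G$ contains two disjoint edges $xy$ and $zw$. Then $\{x,y\}$ and $\{z,w\}$ are two independent pairs in $G$. Together with $k-2$ singletons they give $k$ classes, and we are done.
\end{enumerate}

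The main obstacle is the remaining subcase: $\alpha(G)=2$ and $\overline G$ has no matching of size $2$. Here I would use the structure of graphs with matching number $1$. All their edges pairwise intersect, so they form either a star or a triangle. A triangle is excluded because $\alpha(G)=2$. So the edges of $\overline G$ form a star with some center $v$; if $\overline G$ is a single edge, take $v$ to be either endpoint.

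Now apply the Hall ratio bound to $U=V(G)\setminus\{v\}$. Every non-edge of $G$ contains $v$, so $G[U]$ is a clique on $k+1$ vertices. This gives $\alpha(G[U])=1$ and $|U|/\alpha(G[U])=k+1>k$, contradicting $\rho(G)\le k$. Therefore this subcase cannot occur, and $G$ is $k$-colorable in all cases.
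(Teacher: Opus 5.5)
Your proof is correct and follows essentially the same route as the paper: you handle $n\le k+1$ directly, and for $n=k+2$ you merge either two disjoint non-edges or an independent triple, noting that otherwise all non-edges share a vertex $v$, so $G-v\cong K_{k+1}$ contradicts $\rho(G)\le k$. The only difference is cosmetic: you split on $\alpha(G)$ and apply the Hall-ratio bound at the end, whereas the paper first rules out an induced $K_{k+1}$ and then reaches the same contradiction.
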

\begin{proof}
The claim is immediate if $|V(G)|\leq k$. If $|V(G)|=k+1$, then $G$ is not
$k$-colorable only when $G\cong K_{k+1}$. However, in this case, we have $\rho(G) = k+1$, which contradicts the assumption $\rho(G) \leq k$.

Now suppose $|V(G)|=k+2$. The graph $G$ contains no
clique of order $k+1$, owing to the previous case. Equivalently, for every vertex $v$, the graph
$\overline G-v$ contains an edge. If $\overline G$ contains two
disjoint edges, then $G$ has two disjoint nonadjacent pairs. We may assign one color to each nonadjacent pair and assign remaining vertices distinct colors, which uses $k$ colors.
If $\overline G$ contains a triangle, then we can assign one color to the independent triangle in $G$
and assign remaining vertices distinct colors, which also uses $k$ colors. In both cases, $G$ is $k$-colorable.

It remains only to observe that one of these two cases must occur. Indeed, if $\overline G$ contains neither two disjoint edges nor a triangle, then all edges of $\overline G$ are incident with a common vertex $v$. Hence $\overline G-v$ is edgeless, contradicting the fact that $\overline G-v$ contains an edge for every vertex $v$.
\end{proof}

\begin{thm}\label{thm:complete-multipartite-Hall-universal}
Let $G\cong K_{A_1,\ldots,A_r}
$ be a complete multipartite graph on $V=A_1\sqcup\cdots\sqcup A_r$, with
$r\geq 2$. 
Then $G$ is Hall-universal if and only if
\[
|V|-\max_i |A_i| \leq 2
\qquad\text{or}\qquad
2\max_i |A_i|  - |V| \leq 2.
\]
\end{thm}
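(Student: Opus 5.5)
Throughout, write $n=|V|$, $n_i=|A_i|$, and $k_i=n-n_i$. The plan is to reduce Hall-universality of $G$ to a purely chromatic statement about the graphs $H[A_i]$. By Proposition~\ref{prop:multipartite-Hall-ratio}, whether $(H,G)$ satisfies Hall's condition depends only on the induced graphs $H[A_i]$. So if $H\in\HC(G)$, then the graph $\widehat H$, obtained from $H$ by adding every edge between distinct parts, is also in $\HC(G)$. By Proposition~\ref{prop:basics}, it then suffices to decide $G$-colorability for such $\widehat H$.

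\textbf{Reduction.} In $\widehat H$ a color class cannot meet two parts. A $G$-coloring of $\widehat H$ therefore amounts to pairwise disjoint color sets $C_i\subseteq V\setminus A_i$ with $|C_i|\ge\chi(H[A_i])$. Apply Hall's theorem to the demands $\chi_i=\chi(H[A_i])$:
\begin{itemize}
\item a single part $i$ needs $\chi_i\le k_i$;
\item any two or more parts have combined available set all of $V$, and $\sum\chi_i\le\sum n_i=n$ holds automatically.
\end{itemize}
Hence $\widehat H$ is $G$-colorable if and only if $\chi(H[A_i])\le k_i$ for all $i$. Hall's condition, on the other hand, is $\rho(H[A_i])\le k_i$ for all $i$. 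So $G$ is Hall-universal if and only if, for every $i$, every graph on $n_i$ vertices with $\rho\le k_i$ has $\chi\le k_i$. When $n_i\le k_i$ this is trivial, since $\chi\le n_i$. This fails to be automatic only for a part with $n_i>n/2$, which can only be a largest part. With $m=\max_i n_i$ and $k=n-m$, the question becomes: does every graph on $m$ vertices with $\rho\le k$ satisfy $\chi\le k$?

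\textbf{Sufficiency.} The first alternative of the statement is $k\le2$.
\begin{itemize}
\item If $k=1$, then $\rho\le1$ forces an edgeless graph.
\item If $k=2$, then an odd cycle $C_{2t+1}$ has ratio $(2t+1)/t>2$, so $\rho\le2$ forces a bipartite graph.
\end{itemize}
The second alternative, $2m-n\le2$, means $m\le k+2$. If also $k\ge2$, Lemma~\ref{lem:k-colorable} applies directly (the case $k=1$ was handled above).

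\textbf{Necessity.} Suppose $k\ge3$ and $m\ge k+3$. I would exhibit a graph on $m$ vertices with $\rho\le k$ but $\chi=k+1$, namely
\[
C_5\vee K_{k-2}\quad\text{together with } m-k-3 \text{ isolated vertices}.
\]
Its chromatic number is $3+(k-2)=k+1$. For the Hall ratio, consider a subset $U$:
\begin{itemize}
\item if $\alpha(H[U])=1$, then $U$ is a clique of size at most $\omega=k$;
\item otherwise $\alpha(H[U])\ge2$ and $|U|\le k+3\le 2k$, giving ratio at most $k$;
\item adding isolated vertices never raises the ratio, since each one increases $\alpha$ by one.
\end{itemize}
Placing this graph on the largest part and making all other $H[A_i]$ edgeless gives a pair $(H,G)$ satisfying Hall's condition with $H$ not $G$-colorable.

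The step needing the most care is the reduction: checking that the Hall-system argument for disjoint color sets correctly captures the join case, and that passing to $\widehat H$ loses nothing in either direction. The extremal construction itself is routine once identified.
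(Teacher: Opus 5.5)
Your proof is correct, and its skeleton matches the paper's. Proposition~\ref{prop:multipartite-Hall-ratio} reduces everything to the largest part, and the implication $\rho\le k\Rightarrow\chi\le k$ there is settled by the edgeless case, the bipartite case, and Lemma~\ref{lem:k-colorable}. The counterexample is the same $K_{k-2}\vee C_5$ padded with isolated vertices.

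The genuine difference is in the sufficiency direction. The paper splits into two regimes. When $2M\le n$ it invokes Fact~\ref{fact:12-factor} and Theorem~\ref{thm:hedet} to get total Hall-universality. When $2M>n$ it writes down an explicit coloring: $H[A_0]$ is colored from $V\setminus A_0$, and $V\setminus A_0$ is injected into $A_0$. You instead pass to the saturated graph $\widehat H$ and apply Hall's theorem with demands $\chi(H[A_i])$. This gives an exact criterion: $\widehat H$ is $G$-colorable iff $\chi(H[A_i])\le |V|-|A_i|$ for every $i$. Your route buys a single statement from which both directions follow. It also explains structurally why only a part with more than half the vertices can matter, without appealing to the $\{1,2\}$-factor theorem. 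The paper's route is more concrete, but it treats the two regimes by separate tools.

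Your Hall-ratio check for the construction splits on $\alpha(H[U])=1$ versus $\alpha(H[U])\ge 2$ and uses $\omega=k$. This is a slightly slicker version of the paper's count by the number of cycle vertices in $U$. One small point: the bound $|U|\le k+3$ in your second case only holds for $U$ inside the join. Subsets containing isolated vertices are covered by your third observation, so the argument is complete, but the cases should be read in that order.
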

\begin{proof}
Let $n=|V|$, $M=\max_i |A_i|$, and $k = n-M$. We first establish the ``if'' direction. Suppose first that $2M\leq n$. Equivalently, no part of the multipartition has
more than half of the vertices. In this case, $G$ contains a spanning $\{1,2\}$-factor, by Fact~\ref{fact:12-factor}, and $G$ is totally Hall-universal, by Theorem~\ref{thm:hedet}. Now suppose $2M>n$. Then there is a unique largest part, say $A_0$, so that $|A_0|=M$. Let $H$ be a graph on $V$ such that $(H,G)$ satisfies Hall's condition. By
Proposition~\ref{prop:multipartite-Hall-ratio}, we have $\rho(H[A_0])\leq k$.
We claim $H[A_0]$ is $k$-colorable. If $k=n-M=1$, then $H[A_0]$ must be edgeless and hence $1$-colorable. If $k=2$, then Hall's condition is equivalent to
\begin{equation} \label{eq:k=2}
    |U|\leq 2 \,\alpha(H[U]) \quad \text{ for all $U\subseteq A_0$.
}
\end{equation}
If $H[A_0]$ contained an odd cycle, then there would exist $U$ such that $H[U]$ is a shortest induced odd cycle. If $|U|=2\ell +1$, then $\alpha(H[U])=\ell$, which
contradicts inequality \eqref{eq:k=2}. Therefore $H[A_0]$ is bipartite and
hence $2$-colorable. Finally, if $k \geq 3$ and $M\leq k+2$, we obtain that $H[A_0]$ is $k$-colorable by Lemma~\ref{lem:k-colorable}.

We now color $H$. Since every vertex of $A_0$ has list $V\setminus A_0$ of size $k$, we can properly color $H[A_0]$. Next, we can color the vertices of $V\setminus A_0$ injectively
using distinct colors from $A_0$. This is possible because  $M=|A_0| > |V\setminus A_0| =n-M$. Therefore, $H$ is $G$-colorable whenever $(H,G)$ satisfies Hall's condition.

We now prove the ``only if'' direction by contrapositive. Suppose 
\[
k=n-M\geq 3
\qquad\text{and}\qquad
M\geq k+3.
\]
In this case, we construct a graph $H$ such that $(H,G)$ satisfies Hall's condition but $H$ is not $G$-colorable.
Let $A_0$ be a largest partition of $G$ such that $|A_0|=M$. Choose a subset $W\subseteq A_0$ with $|W|=k+3$. Construct the join
\[
H[W]=K_{k-2}\vee C_5,
\]
i.e., connect each vertex of the $K_{k-2}$ to every vertex of the $C_5$. Let all remaining vertices
of $V$ be isolated, and call the resulting graph $H$. 

We first show that $(H,G)$ satisfies Hall's condition. By
Proposition~\ref{prop:multipartite-Hall-ratio}, it suffices to check
\[
\rho(H[A_i])\leq |V|-|A_i| \quad \text{for all } i \in [r].
\]
If $i\neq 0$, then $H[A_i]$ is edgeless, so $\rho(H[A_i])=1\leq |V|-|A_i|$. We claim that every nonempty subset $X\subseteq W$ satisfies
\[
|X|\leq k \, \alpha(H[X]),
\]
which means $\rho(H[W])\leq k=|V|-|A_0|$.  This implies $\rho(H[A_0])\leq |V|-|A_0|$ as additional isolated vertices cannot increase the Hall ratio. We now prove the stated claim.

Let $s$ be the number of vertices of the clique $K_{k-2}$ and $t$
be the number of vertices of the cycle $C_5$ in $X \subseteq W$. Thus
\[
|X|=s+t,
\qquad
0\leq s\leq k-2,
\qquad
0\leq t\leq 5.
\]
If $t=0$, then $H[X]$ is a clique of order $s\leq k-2$, so $\alpha(H[X]) = 1$. Hence, we have
\[
|X|=s\leq k=k \; \alpha(H[X]).
\]
If $1\leq t\leq 2$, then $\alpha(H[X])\geq 1$, and
\[
|X|=s+t\leq (k-2)+2=k\leq k \, \alpha(H[X]).
\]
If $t\geq 3$, then $C_5[X]$ contains an
independent set of size at least $2$. Hence $\alpha(H[X])\geq 2$, and
\[
|X|=s+t\leq (k-2)+5=k+3\leq 2k\leq k \, \alpha(H[X]),
\]
since $k\geq 3$. This proves our claim. Therefore $(H,G)$ satisfies Hall's condition.

It remains to show that $H$ is not $G$-colorable. Every vertex of $W \subseteq A_0$ has
available colors exactly $V\setminus A_0$, where $|V\setminus A_0| = k$. But by construction, the chromatic number of $K_{k-2}\vee C_5$ is $(k-2)+3=k+1$. Thus the induced subgraph $H[W]$ is not $k$-colorable. Hence $H$ is not $G$-colorable.
\end{proof}

\begin{cor}\label{thm:Kab-characterization}
Let $G\cong K_{A,B}$ be a complete bipartite graph. Then $G$ is Hall-universal if and only if
\[
\min\{|A|,|B|\}\le 2
\qquad\text{or}\qquad
\bigl||A|-|B|\bigr|\le 2.
\]
\end{cor}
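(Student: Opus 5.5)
This is the $r=2$ specialization of Theorem~\ref{thm:complete-multipartite-Hall-universal}, so the plan is simply to substitute $V=A\sqcup B$ into its criterion and simplify. Set $a=|A|$, $b=|B|$, $n=a+b$, and assume without loss of generality that $a\ge b$. Then $M=\max\{a,b\}=a$.

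The first disjunct of the theorem, $|V|-\max_i|A_i|\le 2$, becomes $n-a=b\le 2$, that is, $\min\{|A|,|B|\}\le 2$.

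The second disjunct, $2\max_i|A_i|-|V|\le 2$, becomes $2a-(a+b)=a-b\le 2$, that is, $\bigl||A|-|B|\bigr|\le 2$.

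Since the theorem holds for all $r\ge 2$, the equivalence follows at once. The only bookkeeping is the symmetric relabeling, and $K_{A,B}$ is symmetric in $A$ and $B$.

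There is no real obstacle. The one point worth remarking on is the case where $A$ or $B$ is empty: then $G$ is edgeless and has isolated vertices. This is excluded because $K_{A,B}$ is understood to have $r=2$ nonempty parts, which is the hypothesis of the theorem. Cross-checks are consistent: stars ($\min=1$) agree with Proposition~\ref{prop:star-Hall}, and $|a-b|\le 1$ gives a spanning $\{1,2\}$-factor, consistent with Theorem~\ref{thm:hedet}.
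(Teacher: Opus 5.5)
Your proposal is correct and matches the paper, which states this as the immediate $r=2$ case of Theorem~\ref{thm:complete-multipartite-Hall-universal}, where $|V|-\max_i|A_i|=\min\{|A|,|B|\}$ and $2\max_i|A_i|-|V|=\bigl||A|-|B|\bigr|$. One side remark in your cross-check is wrong but does not affect the argument: for bipartite $G$, a spanning $\{1,2\}$-factor amounts to a perfect matching (Remark~\ref{rmk:tree-coloring}), so $K_{a,b}$ has one only when $a=b$, and for $|a-b|\in\{1,2\}$ the graph is Hall-universal but not totally Hall-universal, via the $2M>n$ branch of the theorem's proof.
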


\subsection{Grid graphs} 

The grid graphs $G=P_m\square P_n$ with $mn$ even have a perfect matching, meaning they are Hall-universal, by Theorem~\ref{thm:hedet}. We show in this subsection that such is also the case when they do not have a perfect matching. We first establish the following elementary lemma.

\begin{lem}\label{lem:odd-grid-tromino-matching} Let $G=P_m\square P_n$ with $m,n$ odd. Consider two distinct vertices $u,w$ in the larger bipartition class of $G$, and suppose that $u$ and $w$ have a common neighbor $v$. Then $G-\{u,v,w\}$ has a perfect matching.
\end{lem}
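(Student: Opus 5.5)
Coordinatize the grid as $V=[m]\times[n]$ with $(i,j)\sim(i',j')$ iff $|i-i'|+|j-j'|=1$. Since $m,n$ are odd, the larger class is the set of \emph{even} vertices ($i+j$ even), of size $(mn+1)/2$, and it contains all four corners. Then $v$ is odd, and $u,w$ are two of its even neighbours. After deleting $\{u,v,w\}$, the even and odd classes both have $(mn-3)/2$ vertices, so a perfect matching is at least not excluded by counting. Up to the symmetries of the grid, $\{u,v,w\}$ is either a straight tromino (say $u=v-(1,0)$, $w=v+(1,0)$) or an L-tromino (say $u=v-(1,0)$, $w=v+(0,1)$). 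I would construct the perfect matching explicitly by tiling $G-\{u,v,w\}$ with dominoes, since a domino tiling of the remaining cells is exactly a perfect matching.

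The main tool is the standard fact that an $a\times b$ rectangle with $ab$ even is tiled by dominoes. The other building block is a corner lemma for a rectangle with both sides odd: if one even vertex (such as a corner) is removed, the remainder tiles. The reason is that when a corner is removed, the first row or column minus that corner has even length, and what is left is an even-by-odd rectangle. The general case of an arbitrary even vertex follows by a similar strip decomposition.

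For the straight case with $v=(i,j)$, the tromino occupies column $j$, rows $i-1,i,i+1$. I would split the grid into three vertical bands: columns $<j$, column $j$, and columns $>j$. Column $j$ minus the three cells splits into two vertical segments of lengths $i-2$ and $m-i-1$. These have the same parity, and since $v$ is odd, $i+j$ is odd. If $j$ is odd, then $i$ is even, so both segments are odd. In that case the left band has $j-1$ columns, an even number, and the right band has $n-j$ columns, also even, so both bands tile. The two odd segments must then be fixed by borrowing one cell from an adjacent band. I would instead choose a cleverer partition: cut horizontally at row $i$, giving the rectangles rows $<i-1$ (a block of $i-2$ rows), the three-row strip, and rows $>i+1$. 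The three-row strip minus the vertical tromino splits into two $3\times(\text{even})$ pieces when $j$ is odd. The row blocks have $i-2$ and $m-i-1$ rows, which are even when $i$ is even, so everything is rectangles with an even side. If instead $j$ is even, then $i$ is odd, and I would use the transposed cut: columns $<j$ and $>j$ give even widths $j-1$ and $n-j$ when $j$ is odd. When $j$ is even and $i$ is odd, the row cut gives odd blocks, so I would need a hybrid decomposition.

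That parity case analysis, in both the straight and the L-shaped case, is the main obstacle. For each parity pattern of $(i,j)$ I would cut the grid into rectangles with an even side, plus at most one odd-by-odd rectangle missing a corner, arranged so that the removed tromino sits on a boundary between the pieces. In the L case I expect a cross-shaped cut through $v$: four quadrant rectangles meeting at $v$, with the two tromino arms lying along two of the cut lines.

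As a fallback that avoids the case analysis, I would argue via Hall's theorem on the bipartite graph $G-\{u,v,w\}$. In a grid, a deficient set of odd vertices would have to be bounded by a short boundary, and removing only three cells cannot create such a bottleneck. This route would need an isoperimetric-type argument, so I would try the explicit tilings first.
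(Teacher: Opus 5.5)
\textbf{There is a genuine gap: the parity case analysis is never carried out.} Your strategy of tiling $G-\{u,v,w\}$ by rectangles that each have an even side is viable. But the proposal stops where the proof has to happen. You yourself call the parity case analysis the main obstacle, and it is left undone, so as written this is a plan rather than a proof.

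\textbf{Straight case.} You only settle $j$ odd, $i$ even, and there you miscompute. For $i$ even, the column segments of lengths $i-2$ and $m-i-1$ are \emph{even}, so your first three-band cut already works. The hard class, $j$ even and $i$ odd, is deferred to an unspecified ``hybrid decomposition''. In that class both column segments are odd. Both side bands, both row blocks, and both pieces of the three-row strip are odd-by-odd. So neither of your cuts applies.

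\textbf{L case.} The cross-shaped cut fails as described. With $v=(i,j)$, $u=(i-1,j)$, $w=(i,j+1)$, the four leftover arm segments have lengths $j-1$, $i-2$, $m-i$, $n-j-1$. Because $i+j$ is odd, exactly two of them are odd in each parity class. So the arms cannot simply lie along the cut lines.

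\textbf{Fallback.} The Hall/isoperimetric route is only a heuristic, and carrying it out would be harder than the lemma itself.

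\textbf{How to close the gaps.} Both holes close with pinwheel decompositions around the tromino, in which each odd arm is absorbed into an adjacent strip. Write $[a,b]\times[c,d]$ for rows $a$ to $b$ and columns $c$ to $d$.
In the straight case with $i$ odd, $j$ even, use $[1,i-2]\times[1,j]$, $[i-1,m]\times[1,j-1]$, $[i+2,m]\times[j,n]$, $[1,i+1]\times[j+1,n]$. These partition $G-\{u,v,w\}$ and have even sides $j$, $m-i+2$, $n-j+1$, $i+1$.
For the L-tromino with $i$ odd, $j$ even, use $[1,i-2]\times[1,j]$, $[i-1,m]\times[1,j-1]$, $[1,i-1]\times[j+1,n]$, $\{i\}\times[j+2,n]$, $[i+1,m]\times[j,n]$.
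For the L-tromino with $i$ even, $j$ odd, use $[1,m]\times[1,j-1]$, $[1,i-2]\times\{j\}$, $[1,i-1]\times[j+1,n]$, $[i+1,m]\times[j,j+1]$, $[i,m]\times[j+2,n]$.

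\textbf{Comparison with the paper.} The paper sidesteps this bookkeeping by inducting on $m+n$. It deletes two boundary rows or columns that miss $\{u,v,w\}$; these carry a perfect matching and leave the parity classes unchanged. This is possible unless the grid is $3\times1$, $3\times3$ or $3\times5$ up to symmetry, and those base cases are checked by hand. Your argument, once completed, would instead exhibit the matching explicitly in one step.
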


\begin{proof}
The result is trivial for $mn=1$, so assume $mn\geq 3$.
Identify $G$ with the Cartesian grid $[m]\times[n]$.
We induct on $m+n$. If either the first two or the last two rows are disjoint from
$\{u,v,w\}$, we can perfectly match the vertices in those two rows and apply the
induction hypothesis to the remaining $(m-2)\times n$ grid.  The same applies to the first two or the last two columns.

Since $\{u,v,w\}$ occupies at most three consecutive rows or columns, this reduction is always possible unless,
up to symmetry, the grid has dimensions
\[
3\times 1,\qquad 3\times 3,\qquad 3\times 5.
\]
In each of these cases, it is easy to check $G-\{u,v,w\}$ has a perfect matching.
\end{proof}

\begin{thm}\label{thm:odd-grid-Hall-universal}
Let $G=P_m\square P_n$ be a grid graph. Then $G$ is Hall-universal.
\end{thm}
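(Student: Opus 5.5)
If $mn$ is even, $G$ has a perfect matching, so it is totally Hall-universal by Theorem~\ref{thm:hedet}. We may therefore assume $m,n$ odd. Let $X$ be the larger bipartition class (the vertices $(i,j)$ with $i+j$ even) and $Y$ the smaller one, so $|X|=|Y|+1$. Let $H$ be a graph on $V$ such that $(H,G)$ satisfies Hall's condition. The aim is to produce a $G$-coloring of $H$ of the following shape: one vertex $v\in Y$ together with two of its $G$-neighbours $u,w\in X$, where $u$ and $w$ both receive color $v$, and $v$ receives either $u$ or $w$; all other vertices are colored along a perfect matching of $G-\{u,v,w\}$. Such a perfect matching exists by Lemma~\ref{lem:odd-grid-tromino-matching}. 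Matched pairs swap colors, so all colors used on $V\setminus\{u,v,w\}$ are distinct and lie outside $\{u,v,w\}$. The only possible conflicts are therefore that $u$ and $w$ share color $v$, and that $v$'s color may coincide with a color used elsewhere. The latter cannot happen, since colors $u,w$ are used only at $v$. Hence the coloring is proper exactly when $uw\notin E(H)$.

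It therefore suffices to find $v\in Y$ and two $G$-neighbours $u,w\in X$ of $v$ with $uw\notin E(H)$. Suppose, for contradiction, that for every $v\in Y$ the set $N_G(v)$ is a clique in $H$. Apply Hall's condition to $U=X$, and use the following counting argument. Every color $\sigma$ appearing in a list of a vertex of $X$ lies in $Y$, and the vertices of $X$ whose lists contain $\sigma$ are exactly $N_G(\sigma)$, a clique in $H$. Hence $\alpha(H[X],L,\sigma)\le 1$ for every $\sigma$, and
\[
|X|\le\sum_{\sigma\in Y}\alpha(H[X],L,\sigma)\le |Y|=|X|-1,
\]
a contradiction. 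Thus some $v\in Y$ has two $H$-nonadjacent $G$-neighbours $u,w\in X$, and the construction above yields a proper $G$-coloring of $H$.

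The step requiring care is confirming that Lemma~\ref{lem:odd-grid-tromino-matching} applies exactly to this configuration: $u,w$ distinct, in the larger class, with common neighbour $v$. It does. One should also check the degenerate case $mn=1$, where $G$ has an isolated vertex and Hall's condition fails (Proposition~\ref{prop:basics-Hall}), so that case is vacuous; the paths $1\times n$ are covered as well. If the lemma's small cases posed trouble, I would verify $3\times 3$ and $3\times 5$ by hand, but the lemma is assumed. In effect the whole proof reduces to the observation that Hall's condition on $X$, with all colors from $Y$, forces some color class to admit an independent pair.
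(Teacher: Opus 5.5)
Your proof is correct and uses the same two ingredients as the paper. First, Hall's condition applied to $H[X]$, where every available color lies in $Y$, forces two $H$-nonadjacent vertices $u,w\in X$ with a common $G$-neighbor $v$. Second, Lemma~\ref{lem:odd-grid-tromino-matching} then yields a $G$-coloring of $K_V-uw$, which contains $H$. The only difference is packaging: the paper routes the argument through Proposition~\ref{prop:maximal-minimal} by characterizing $\MHC(G)$, which also requires directly verifying that each $K_V-uw$ satisfies Hall's condition. Your direct argument shows that verification is unnecessary, and in any case it follows from Lemma~\ref{lem:HJ-lem} once $K_V-uw$ has been colored.
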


\begin{proof}
    As noted earlier, if either $m$ or $n$ is even, then $G$ has a $1$-factor. In particular, this means $\MC(G) = \{K_V \} = \MHC(G)$, and $G$ is totally Hall-universal by Theorem~\ref{thm:hedet}. So suppose both $m,n$ are odd, and $G$ is not totally Hall-universal. Let $V= V(G)=X\sqcup Y$ be the bipartition of $G$, such that $|X|=|Y|+1$. We show that $\MC(G)=\MHC(G)$, and the result follows by
Proposition~\ref{prop:maximal-minimal}.

\begin{claim}\label{claim:grid}
We have $H \in \MHC(G)$ if and only if $H = H_{uw}  \coloneq K_{V}-uw$ for some $u,w\in X$ such that $N_G(u)\cap N_G(w)\neq\varnothing$.
\end{claim}

Suppose Claim~\ref{claim:grid} holds. Let $u,w\in X$ with $N_G(u)\cap N_G(w)\neq\varnothing$, and choose
$v\in N_G(u)\cap N_G(w)$. By Lemma~\ref{lem:odd-grid-tromino-matching},
$G-\{u,v,w\}$ has a perfect matching $M$. Color both $u$ and $w$ with $v$,
color $v$ with $u$, and for each edge $xy\in M$, color $x$ with $y$ and $y$ with $x$. This is a proper $G$-coloring of $K_{V}-uw$. 
Therefore, every graph in $\MHC(G)$ lies in $\C(G)$. Indeed, since adding any additional edge results in $K_V$, and since we know $K_V \notin \MC(G)$, we conclude every graph in $\MHC(G)$ lies in $\MC(G)$. We now prove Claim~\ref{claim:grid}.

 \noindent\textit{Proof of Claim~\ref{claim:grid}.} Suppose $H\in\MHC(G)$. If $\alpha(H[X],L,\sigma) =1$ for all $\sigma \in Y$, then $N_G(\sigma)$ is a clique in $H[X]$ for all $\sigma\in Y$. Since $G$ is connected, it follows that $H[X]$ is connected. As each vertex of $X$ receives members of $Y$ as colors, we obtain
\[
\sum_{\sigma\in V}\alpha(H[X],L,\sigma)
=\sum_{\sigma\in Y}\alpha(H[X],L,\sigma)
=|Y|<|X|.
\]
But this contradicts Hall's condition. Hence there must exist $y\in Y$ such that $\alpha(H[X],L,y)\geq 2$. In other words, there exist nonadjacent $u,w\in X$ with $N_G(u)\cap N_G(w)\neq\varnothing$.

Conversely, suppose $N_G(u)\cap N_G(w)\neq\varnothing,$ and choose $v\in N_G(u)\cap N_G(w)$. Let $F$ be any connected induced
subgraph of $H_{uw}$, and write
\[
S=V(F)\cap X,\qquad T=V(F)\cap Y.
\]
If $S\neq X$, choose $x_0\in X\setminus S$. Note that $G-x_0$ has a perfect
matching. Indeed, this follows from Lemma~\ref{lem:odd-grid-tromino-matching}: if only one vertex is deleted, then one can add the edge between the other two vertices to the perfect matching given by the lemma. Then by Hall's marriage theorem, we have
\[
|N_G(S)|\ge |S|
\qquad\text{and}\qquad
|N_G(T)|\ge |T|.
\]
Hence, we obtain
\[
\sum_{\sigma\in V}\alpha(F,L,\sigma)
\ge |N_G(S)|+|N_G(T)|
\ge |S|+|T|
=|V(F)|.
\]
If $S=X$, since $u$ and $w$ are nonadjacent in $H_{uw}$ and both
have the color $v$ in their lists, $\alpha(F,L,v)\ge2.$
For every $\sigma\in Y\setminus\{v\}$, we have
$\alpha(F,L,\sigma)\ge1$, and hence
\[
\sum_{\sigma\in Y}\alpha(F,L,\sigma)
\ge |Y|+1=|X|.
\]
Also,
\[
\sum_{\sigma\in X}\alpha(F,L,\sigma)
\geq \sum_{\sigma\in N_G(T)}\alpha(F,L,\sigma) \geq |N_G(T)|\ge |T|.
\]
Therefore, we have
\[
\sum_{\sigma\in V}\alpha(F,L,\sigma)
\ge |X|+|T|
=|V(F)|.
\]
To summarize, we have $H_{uw}\in\HC(G)$. If $H \in \MHC(G)$, as we must have $H \subseteq H_{uw}$, it follows that $H = H_{uw}\in\MHC(G)$. 
\end{proof}

\section{Conclusion and Future Work}\label{sec:conclusion}
Graphic list coloring can be viewed naturally as a constraint satisfaction problem (CSP). The graph $H$ encodes incompatibilities among the objects
to be assigned, while $G$ prescribes the choices at each vertex.  A
$G$-coloring of $H$ is therefore an assignment that avoids the conflicts in
$H$ while respecting the restrictions imposed by $G$. Next, one can also see graphic list coloring from the perspective of list homomorphisms. We hope to return to the study of graphic list coloring from these well-studied perspectives. The following questions are immediate from the current paper. 

\begin{enumerate}

    \item Complete the characterization of Hall-universal graphs.

    We have seen that Hall-universal graphs $G$ must be such that $\MC(G)=\MHC(G)$. Observe that all graphs in $\MC(G)$ must be complete multipartite graphs since maximality forces an edge between vertices of distinct color classes. Hence, a necessary condition for a graph $G$ to be Hall-universal is that members of $\MHC(G)$ must be complete multipartite graphs.
    
    \item {Develop the enumeration theory of graphic list colorings.}
    
    It is of interest to count or bound the number of $G$-colorings of $H$ in terms of known parameters of $G$ and $H$, such as their degree sequence. Can the fact that every $G$-colorable graph $H$ satisfies Hall's condition be used to obtain such bounds?

    \item {Study algorithmic aspects of Hall's condition.}
    
    When the lists are graphic, are there better ways  to check Hall's condition than verifying for each connected induced subgraph?
\end{enumerate}

\section{Declaration of Generative AI and AI-Assisted Technologies in the Writing Process}\label{Disclosure}

During the preparation of this work, the author used ChatGPT-5.6 Sol to resolve exceptional cases in Subsection~\ref{sec:exception}. In particular, it identified counterexamples to the initial expectation that the restriction $|V(H)\setminus V(F)|\neq4$ could be omitted from the statement of Theorem~\ref{thm:not-Hall}, which led the author to formulate Proposition~\ref{prop:K8-minus-Kst}. It also constructed the graphs used for the remaining $8$-vertex cases shown in Figure~\ref{fig:remaining-4plus4-cases}. After using this tool, the author reviewed, edited, and verified the content as needed and takes full responsibility for the integrity and accuracy of the publication.

\section{Acknowledgments}
This work began at the 2025 Virtual Masamu Advanced Study Institute (MASI). MASI is an annual convening of the US-Africa Collaborative Research Network, which is currently supported by the NSF award DMS 2620609. 

The author is deeply grateful to Pete Johnson for introducing the problem and for pointing to the relevant literature.

\printbibliography

\vspace{1em}

\noindent
Parikshit Chalise\\
Department of Applied Mathematics and Statistics, Johns Hopkins University\\
Baltimore, MD 21218, USA\\
E-mail: \href{mailto:pchalis1@jhu.edu}{\textcolor{purple}{\texttt{pchalis1@jhu.edu}}}

\newpage 

\appendix
\section{Details for Proof of Proposition~\ref{prop:four-extra-characterization}}
\label{app:prop-9}

We present here the verifications omitted in the proof of Proposition~\ref{prop:four-extra-characterization} for the four cases associated with
Figure~\ref{fig:remaining-4plus4-cases}. Let $H^+$
denote the densest graph satisfying the hypothesis of each case. Then we have
\[
\begin{array}{c|c}
\text{Case} & H^+\\ \hline
\mathrm{I(a)}
&K_8-\{v_2v_4,v_2d,v_4d\}\\
\mathrm{I(b)}
&K_8-\{v_2v_4,dv_3\}\\
\mathrm{II(a)}
&K_8-\{v_1v_3,v_2v_4\}\\
\mathrm{II(b)}
&K_8-\{v_1v_3,v_2v_4,dv_3\}.
\end{array}
\]
Here Case~I(a) refers to the subcase where $\{v_2,v_4,d\}$ is independent, which is not covered by the remaining cases, as discussed in the proof of Proposition~\ref{prop:four-extra-characterization}.

First, we verify Hall's condition. For each case, let $A=\{v_1,v_2,v_3,v_4,d\}$ and $B=\{a,b,c\}$. We apply Observation~\ref{obs:hall-partition} with the corresponding
partition of the color set $S_A = \{a,b,c\}$, $S_B =\{v_1,v_2,v_3,v_4,d\}$. The graph $H^+[B]$ is colorable from the lists restricted to $S_B$. In
Cases~I(a) and~II(a), one may use
\[
a\mapsto v_3,\qquad b\mapsto v_1,\qquad c\mapsto v_2,
\]
while in Cases~I(b) and~II(b), one may use
\[
a\mapsto v_2,\qquad b\mapsto v_3,\qquad c\mapsto v_1.
\]
It suffices to show the graph $H^+[A]$ satisfies Hall's condition with respect to lists restricted to $S_A$.
For $U\subseteq A$, define
\[
h(U)=\sum_{\sigma\in\{a,b,c\}}
\alpha(H^+[U],L,\sigma).
\]
A direct calculation using the lists in
Figure~\ref{fig:remaining-4plus4-cases} gives
\[
\begin{array}{c|ccccc}
\begin{tikzpicture}[baseline=(current bounding box.center)]
  \draw (2,1) -- (3,-0);
  \node[anchor=north west] at (0.08,0.82)
    {$\displaystyle \min_{\substack{U\subseteq A\\ |U|=r}} h(U)$};
  \node[anchor=south east] at (3.12,0.25) {$r$};
\end{tikzpicture}
& 1 & 2 & 3 & 4 & 5 \\ \hline
\text{Case I(a)}  &1&2&3&4&6\\
\text{Case I(b)}  &1&2&3&4&6\\
\text{Case II(a)} &1&2&3&4&6\\
\text{Case II(b)} &1&2&3&4&6
\end{array}
\]
Observe that we have $h(U)\ge |U|$ for every $U$ in each case. Therefore, $(H^+,G)$ satisfies Hall's condition in all four cases.

It remains to check that $H$ is not $G$-colorable. In Case~I(a), the list assignments are
\[
L(v_1)=\{b\},\qquad L(v_3)=\{a,c\},\qquad
L(v_4)=\{b,c\},\qquad L(d)=\{a,b\}.
\]
Since $v_1$ is adjacent to $v_4$ and $d$, any coloring forces
$v_1\mapsto b$, $v_4\mapsto c$, and $d\mapsto a$. But $v_3$ is adjacent
to both $v_4$ and $d$ and does not have a distinct color available.
In Case~II(a), the list assignments are
\[
L(v_1)=\{a,b\},\qquad L(v_2)=\{a,c\},\qquad
L(v_4)=\{b,c\},\qquad L(d)=\{c\}.
\]
Since $d$ is adjacent to $v_2$ and $v_4$, any coloring forces
$d\mapsto c$, $v_2\mapsto a$, and $v_4\mapsto b$. But $v_1$ is adjacent
to both $v_2$ and $v_4$ and does not have a distinct color available. Finally, in Cases~I(b) and~II(b), the list assignments are
\[
L(v_1)=\{c\},\qquad L(v_2)=\{a,c\},\qquad
L(v_3)=\{a,b\},\qquad L(v_4)=\{b,c\}.
\]
Since $v_1$ is adjacent to $v_2$, and $v_2$ is adjacent to $v_3$, any
coloring forces $v_1\mapsto c$, $v_2\mapsto a$, and $v_3\mapsto b$.
But $v_4$ is adjacent to both $v_1$ and $v_3$ and does not have a
distinct color available. We conclude that $H$ is not $G$-colorable in all four cases.

\end{document}